\newlength{\bibitemsep}\setlength{\bibitemsep}{.1\baselineskip plus .05\baselineskip minus .05\baselineskip}
\newlength{\bibparskip}\setlength{\bibparskip}{1.2pt}
\let\oldthebibliography\thebibliography
\renewcommand\thebibliography[1]{%
  \oldthebibliography{#1}%
  \setlength{\parskip}{\bibitemsep}%
  \setlength{\itemsep}{\bibparskip}%
}
\renewcommand{\paragraph}{%
  \@startsection{paragraph}{4}%
  {\z@}{1.2ex \@plus 1ex \@minus .2ex}{-0.5em}%
  {\normalfont\normalsize\bfseries}%
}
\theoremstyle{plain}
\newtheorem{thm}{Theorem}[section]
\newtheorem{lem}[thm]{Lemma}
\newtheorem{cor}[thm]{Corollary}
\newtheorem{cl}[thm]{Claim}
\newtheorem{prop}[thm]{Proposition}
\theoremstyle{definition}
\newtheorem{rem}[thm]{Remark}
\def\final{0}  
\def\iflong{\iffalse}
\newcommand{\kristof}[1]{{\color{red}[{ \textbf{Kristóf:}  #1}]\marginpar{\color{red}*}}}
\newcommand{\tamas}[1]{{\color{blue}[{ \textbf{Tamás:}  #1}]\marginpar{\color{blue}*}}}
\newcommand{\andris}[1]{{\color{magenta}[{ \textbf{Andris:}  #1}]\marginpar{\color{magenta}*}}}
\newcommand{\bogi}[1]{{\color{teal}[{ \textbf{Bogi:}  #1}]\marginpar{\color{teal}*}}}
\newcommand{\laci}[1]{{\color{purple}[{ \textbf{Laci:}  #1}]\marginpar{\color{purple}*}}}
\newcommand{\balazs}[1]{{\color{orange}[{ \textbf{Balázs:}  #1}]\marginpar{\color{orange}*}}}
\newcommand{\kristof}[1]{}
\newcommand{\tamas}[1]{}
\newcommand{\andris}[1]{}
\newcommand{\bogi}[1]{}
\newcommand{\laci}[1]{}
\newcommand{\balazs}[1]{}
\DeclareMathOperator\xc{xc}
\DeclareMathOperator\fr{frac}
\DeclareMathOperator\bmm{bmm}
\newcommand{\bR}{\mathbb{R}}
\newcommand{\bZ}{\mathbb{Z}}
\newcommand{\bF}{\mathbb{F}}
\newcommand{\fc}{\mathfrak{c}}
\newcommand{\cB}{\mathcal{B}}
\newcommand{\cC}{\mathcal{C}}
\newcommand{\cH}{\mathcal{H}}
\newcommand{\cQ}{\mathcal{Q}}
\newcommand{\cR}{\mathcal{R}}
\newcommand{\cA}{\mathcal{A}}
\newcommand*\diff{\mathop{}\!\mathrm{d}}
\let\Right\bigr
\let\Left\bigl
\def\bigr#1{\Right#1\@ifnextchar){\!\bigr}{}}
\def\bigl#1{\Left#1\@ifnextchar({\!\bigl}{}}
\title{Matroid Products via Submodular Coupling}
\author{
Kristóf Bérczi\thanks{MTA-ELTE Matroid Optimization Research Group and HUN-REN–ELTE Egerváry Research Group, Department of Operations Research, ELTE Eötvös Loránd University, and HUN-REN Alfréd Rényi Institute of Mathematics, Budapest, Hungary. Email: \texttt{kristof.berczi@ttk.elte.hu}.}
\and
Boglárka Gehér\thanks{Department of Applied Analysis and Computational Mathematics, ELTE Eötvös Loránd University, and HUN-REN Alfréd Rényi Institute of Mathematics, Budapest, Hungary. Email: \texttt{bogigeher@gmail.com}.}
\and
András Imolay\thanks{Department of Operations Research, ELTE Eötvös Loránd University, Budapest, Hungary. Email: \texttt{andras.imolay@ttk.elte.hu}.}
\and
László Lovász\thanks{HUN-REN Alfréd Rényi Institute of Mathematics, Budapest, Hungary. Email: \texttt{laszlo.lovasz@ttk.elte.hu}.}
\and
Balázs Maga\thanks{Department of Analysis, ELTE Eötvös Loránd University, and HUN-REN Alfréd Rényi Institute of Mathematics, Budapest, Hungary. Email: \texttt{magab@renyi.hu}.}
\and
Tamás Schwarcz\thanks{Department of Mathematics, London School of Economics and Political Science, London, England, United Kingdom. E-mail: \texttt{t.b.schwarcz@lse.ac.uk}. Most of this work was done while the author was affiliated to ELTE Eötvös Loránd University and the HUN-REN–ELTE Egerváry Research Group.} 
}
\date{}
\begin{document}
\maketitle

\footnote{An extended abstract~\cite{berczi2025matroid} of the paper appeared at 57th Annual ACM Symposium on Theory of Computing (STOC 2025).}

\begin{abstract} 
The study of matroid products traces back to the 1970s, when Lovász and Mason studied the existence of various types of matroid products with different strengths. Among these, the tensor product is arguably the most important, which can be considered as an extension of the tensor product from linear algebra. However, Las Vergnas showed that the tensor product of two matroids does not always exist. Over the following four decades, matroid products remained surprisingly underexplored, regaining attention only in recent years due to applications in tropical geometry and the limit theory of matroids.

In this paper, inspired by the concept of coupling in probability theory, we introduce the notion of coupling for matroids -- or, more generally, for submodular set functions. This operation can be viewed as a relaxation of the tensor product. Unlike the tensor product, however, we prove that a coupling always exists for any two submodular functions and can be chosen to be increasing if the original functions are increasing. As a corollary, we show that two matroids always admit a matroid coupling, leading to a novel operation on matroids. Our construction is algorithmic, providing an oracle for the coupling matroid through a polynomial number of oracle calls to the original matroids.

We apply this construction to derive new necessary conditions for matroid representability and establish connection between tensor products and Ingleton’s inequality. Additionally, we verify the existence of set functions that are universal with respect to a given property, meaning any set function over a finite domain with that property can be obtained as a quotient.

\medskip

\noindent \textbf{Keywords:} Amalgams, Coupling, Coverage functions, Matroids, Product, Quasi-product, Quotients, Submodular functions, Tensor product

\end{abstract}
 \newpage
\pagenumbering{roman}
\tableofcontents
\newpage
\pagenumbering{arabic}
\setcounter{page}{1}

\section{Introduction}
\label{sec:intro}

Coupling of probability measures, generally attributed to Doeblin~\cite{doeblin1938expose}, is a key concept in probability theory through which random variables can be compared with each other. For two probability measures, a {\it coupling} is a joint probability measure over the product of the underlying spaces, such that the marginals correspond to the given probability measures. Formally, if $\mu_1$ and $\mu_2$ are probability measures on spaces $(S_1, \cB_1)$ and $(S_2, \cB_2)$, respectively, a coupling is a joint probability measure $\mu$ on the product space $(S_1\times S_2, \cB_1 \otimes \cB_2)$ such that the marginal of $\mu$ on $S_i$ is $\mu_i$ for $i=1,2$, i.e., $\mu(X_1\times S_2)=\mu_1(X_1)$ for any $X_1\in \cB_1$ and $\mu(S_1\times X_2)=\mu_2(X_2)$ for any $X_2\in \cB_2$. Usually, the goal is to choose a coupling that allows for comparisons between the probability measures, such as minimizing differences or distances between them. Coupling techniques have found applications in various areas. In optimal transport theory, the goal is to find the most efficient way to transform one distribution into another, with applications in economics, statistical physics and machine learning~\cite{chewi2025statistical}. In stochastic processes, coupling helps to describe the joint behavior of random variables and to analyze and bound convergence rates in chains with more complex dependencies~\cite{levin2017markov,aldous-fill-2014}.

Interestingly, a similar notion has also appeared in a combinatorial line of research on matroids. Let $M_1=(S_1,r_1)$ and $M_2=(S_2,r_2)$ be finite matroids over ground sets $S_1$ and $S_2$ with rank functions $r_1$ and $r_2$, respectively. A matroid $M=(S_1\times S_2,r)$ is called a {\it quasi-product} of $M_1$ and $M_2$ if the restriction of $M$ to $\{x\}\times S_2$ is isomorphic to $M_2$ by the natural bijection with $S_2$ for all non-loops $x\in S_1$ and is the zero matroid for loops $x$, and analogously with the two factors interchanged.
If furthermore $X\times Y$ is a flat of $M$ for all flats $X$ of $M_1$ and $Y$ of $M_2$, then $M$ is called a {\it product} of $M_1$ and $M_2$. Finally, if $r(X\times Y)=r_1(X)\cdot r_2(Y)$ holds for every $X\subseteq S_1,Y\subseteq S_2$, then $M$ is a {\it tensor product} of $M_1$ and $M_2$. Note that at this point, it is not clear whether any two matroids admit a tensor product, a product, or even a quasi-product. Mason~\cite{mason1977geometric} and Lovász~\cite{lovasz1977flats} provided constructions yielding a product of rank $r_1(S_1)+r_2(S_2)-1$ via Dilworth completion. The existence of a tensor product, however, was asked as an open question in~\cite{lovasz1977flats}. Note that if both $M_1$ and $M_2$ are linear matroids over the same field, then it is easy to construct a tensor product by taking the tensor product of the matrices representing them. Nevertheless, Las Vergnas~\cite{las1981products} later showed that the situation is generally not so fortunate, as, for example, the Vámos matroid and the rank-2 uniform matroid on three elements do not admit a tensor product. 


One can think of the tensor product of two matroids as a construction where the product of independent sets is itself independent. However, since a tensor product does not exist for every pair of matroids, Mason~\cite{mason1981glueing} raised the question of whether a freest product exists among the possible products. Las Vergnas~\cite{las1981products} answered this question in the negative and showed that the matroid obtained via Dilworth truncation as in~\cite{lovasz1977flats,mason1981glueing} is the freest quasi-product among those satisfying $r(\{x,x'\}\times\{y,y'\})\leq 3$ for all $x,x'\in S_1$ and $y,y'\in S_2$, where $r$ denotes the rank function of the product matroid.


\subsection{Our results and techniques}
\label{sec:our}

In this paper, we extend the concept of coupling to matroids and, more broadly, to submodular functions. We prove that, unlike tensor products, couplings always exist, leading to a novel operation on matroids. This result is particularly interesting because it shows that a finite number of matroids can be encoded into a single one, while also taking into account how these matroids interact with each other. As an application, we establish the existence of functions that are universal with respect to some property, meaning that any set function with that property can be obtained as a quotient. This result brings new insights into the developing limit theory of matroids and submodular functions.

Let $\varphi_1\colon 2^{S_1}\to\bR$ and $\varphi_2\colon2^{S_2}\to\bR$ be set functions defined over finite ground sets $S_1$ and $S_2$, respectively. The notion of a tensor product can be naturally extended to set functions by calling a function $\varphi\colon 2^{S_1\times S_2}\to\bR$ a tensor product of $\varphi_1$ and $\varphi_2$ if $\varphi(X_1\times X_2)=\varphi_1(X_1)\cdot\varphi_2(X_2)$ holds for every $X_1\subseteq S_1$ and $X_2\subseteq S_2$. As a new concept, we call $\varphi$ a {\it coupling} of $\varphi_1$ and $\varphi_2$ if $\varphi(X_1\times S_2)=\varphi_1(X_1)\cdot \varphi_2(S_2)$ for every $X_1\subseteq S_1$ and $\varphi(S_1\times X_2)=\varphi_1(S_1)\cdot \varphi_2(X_2)$ for every $X_2\subseteq S_2$. In other words, a coupling is a set function on the product set, where the projections onto each coordinate return the corresponding $\varphi_i$, up to a constant multiplier. It is worth emphasizing that neither a tensor product nor a coupling is uniquely determined from the factors $\varphi_1$ and $\varphi_2$.

Motivated by the goal of defining the coupling of matroids, Section~\ref{sec:pairs} focuses on submodular functions. First, we show that any two submodular functions admit a submodular coupling (Theorem~\ref{thm:submod}). One remarkable feature of the proof is that it provides an explicit formula for the coupling function using two arbitrary modular functions. However, the proof itself does not imply that if both $\varphi_1$ and $\varphi_2$ are monotonically increasing, then the coupling function $\varphi$ is also monotonically increasing. Since this property is essential for the coupling of two matroids to result in a matroid, next we show that a $k_1$-polymatroid function and a $k_2$-polymatroid function have a $(k_1\cdot k_2)$-polymatroid coupling, which is also integer-valued if the original functions are (Theorem~\ref{thm:poly}). The proof follows a similar idea as that of the submodular case, but the modular functions in question need to be chosen to be elements in the base polyhedra of $\varphi_1$ and $\varphi_2$, respectively. As a corollary, we get that any two matroids admit a coupling (Corollary~\ref{cor:matroid}) which in turn implies that there exists a finite matroid that contains every matroid of fixed rank over a ground set of fixed size (Corollary~\ref{cor:matroid2}). We provide several equivalent characterizations of two matroids having a coupling that is also a tensor product (Theorem~\ref{thm:tensor}), and establish a strong connection between tensor products and Ingleton's inequality, a fundamental tool in the theory of representable matroids (Theorem~\ref{thm:tensor_ingleton}). As a corollary, we get a new proof for the fact that the uniform matroid $U_{2,3}$ and the Vámos matroid do not admit a tensor product. We separately study a subclass of increasing submodular functions with strong structural properties, called ``coverage functions'' in combinatorial optimization. These functions were originally introduced by Choquet for analytical studies. He defined them in terms of a sequence of inequalities strengthening submodularity, and proved their equivalence with what is now the combinatorial definition. We prove that a coverage function and an increasing submodular function have an increasing submodular tensor product (Corollary~\ref{cor:inftens}), and that any two coverage functions have a tensor product that is a coverage function (Corollary~\ref{cor:inf}). The proofs rely on the characterization of the extreme rays of the cone of coverage functions, as established by Choquet. Finally, we describe how to extend our results on submodular functions defined on finite ground sets to those with infinite domains (Theorems~\ref{thm:inf1} and ~\ref{thm:inf2}).

Let $\varphi$ be a set function over some finite ground set $S$, and let $\cQ=(S_1,\dots,S_q)$ be a partition of $S$ into $q$ possibly empty parts. The {\it quotient} of $\varphi$ with respect to $\cQ$ is a set function $\varphi_\cQ$ over $S_\cQ=\{s_1,\dots,s_q\}$ defined by $\varphi_\cQ(X)=\varphi(\bigcup_{s_i\in X} S_i)$ for $X\subseteq S_\cQ$. In particular, if $\varphi\colon 2^{S_1\times S_2}\to\bR$ is a coupling of $\varphi_1\colon 2^{S_1}\to\bR$ and $\varphi_2\colon2^{S_2}\to\bR$ and $\cQ_1$ and $\cQ_2$ are the partitions of $S_1\times S_2$ into fibers of the form $\{x\}\times S_2$ and $S_1\times \{y\}$, respectively, then $\varphi_1=\varphi_{\cQ_1}/\varphi_2(S_2)$ and $\varphi_2=\varphi_{\cQ_2}/\varphi_1(S_1)$. The positive results of Section~\ref{sec:pairs} motivate the following question: Given a property of set functions, is there a function that is universal with respect to that property, in the sense that every function exhibiting the property can be obtained as its quotient? 

In Section~\ref{sec:universal}, we examine this problem and show that, under some natural assumptions, the answer is positive (Theorem~\ref{thm:countable}). Although the proof is analytical and may therefore be less appealing to those interested in combinatorial optimization, the result has far-reaching implications for functions defined on finite ground sets as well. For example, we obtain that there exists a submodular function $\varphi\colon 2^\bZ\to[0,1]$ such that any nonnegative normalized submodular function is a quotient of $\varphi$ (Corollary~\ref{cor:univ_incresing_submod}), and we prove an analogous result for coverage functions as well (Corollary~\ref{cor:infalt}). We conclude the section by providing an explicit construction for a universal coverage function, being in sharp contrast with the preceding non-constructive universality results; the formula of the function is surprisingly simple (Theorem~\ref{thm:construction}).

It is worth emphasizing that for functions over finite ground sets, our proofs are algorithmic in the sense that given value oracles for $\varphi_1$ and $\varphi_2$, one can construct a value oracle for their coupling $\varphi$ with the desired properties in polynomial time.

\subsection{Related work and motivation}
\label{sec:motiv}

Understanding how matroids can be combined does not only help in studying properties of larger systems formed from simpler matroid components, but it is closely related to a range of problems and techniques in combinatorial optimization. In what follows, we give a brief overview of related topics. 

\paragraph{Tropical geometry.}
Tensor products of matroids have deep connections to tropical geometry. A key consequence of Las Vergnas's counterexample is that the tensor product of tropical linear spaces does not always yield a tropical linear space, see e.g.~\cite{giansiracusa2020matroidal}. The notion of tensor products can be extended to more than two matroids. If all $k$ matroids in the product are identical to a given matroid $M=(S,r)$, the tensor product defines a matroid structure on the set of ordered $k$-tuples from $S$, known as the {\it $k$-th power} of $M$. For unordered $k$-tuples, we consider {\it symmetric tensors}, which remain invariant under any permutation of their variables. The concept of symmetric powers of matroids was first introduced by Lovász~\cite{lovasz1977flats} and Mason~\cite{mason1981glueing}, who noted that not all matroids admit higher symmetric powers. 
Draisma and Rincón~\cite{draisma2021tropical} established a link between tropical ideals and matroid symmetric powers by showing that the Bergman fan of the direct sum of the Vámos matroid and $U_{2,3}$ is not a tropically realizable variety. In~\cite{anderson2024matroid}, Anderson proved an equivalence between valuated matroids with arbitrarily large symmetric powers and tropical linear spaces represented as varieties of tropical ideals. Brakensiek, Dhar, Gao, Gopi, and Larson~\cite{brakensiek2024rigidity} explored the connection between rigidity matroids of graphs and matroids arising from linear algebraic constructions like tensor products and symmetric products. Matroid tensor products in tropical geometry were also examined in~\cite{khan2024tropical, eur2024cohomologies}.

\paragraph{Linear matroids.}
A matroid is called linear if its independent sets can be represented by vectors in a vector space over some field $\bF$. Such matroids are particularly interesting because they allow for a rich connection between linear algebra and combinatorial structures. For deciding if a matroid is regular, i.e.~representable over every field, Seymour's decomposition theorem~\cite{seymour1980decomposition} implies an algorithm, as such decompositions can be found efficiently~\cite{truemper1982efficiency}. However, Truemper~\cite{truemper1982efficiency} showed that many representability questions cannot be efficiently solved. This includes deciding representability over a specific field, over all fields with a given characteristic, and, most importantly, over any field. Since determining representability is oracle-hard in general, the focus of research has shifted toward finding necessary or sufficient conditions. Ingleton's inequality~\cite{ingleton1971representation} is one of the most prominent examples, giving a necessary condition for a matroid to be linear. Surprisingly, we show that representability of a matroid is closely related to having a tensor product with $U_{2,3}$.

\paragraph{Limits of matroids.} 
The limit theory of graphs provides powerful tools for analyzing sequences of graphs and their structural similarities through analytic methods. By defining convergence in terms of distributions of small subgraphs (left-convergence) and homomorphisms into small graphs (right-convergence), it aids in understanding complex networks and their applications in various fields of mathematics and computer science; we refer the interested reader to~\cite{lovasz2012large} for a thorough introduction. In~\cite{berczi2024quotient}, a new form of right-convergence called {\it quotient-convergence} was introduced for set functions, which eventually led to a notion of convergence of matroids through their rank functions. The limit object of such a sequence is a submodular function~\cite{lovasz2023submodular}. One of the main research subjects is understanding the structure and properties of these limit objects, which heavily relies on examining their quotients and the coupling of the functions in the sequence.

\paragraph{Extended formulations.} 
For a polytope $P\subseteq\bR^d$, an extension of $P$ is a polytope $Q\in\bR^{d'}$ such that $P=\pi(Q)$ for some affine map $\pi\colon\bR^{d'}\to\bR^d$. The {\it extension complexity} $\xc(P)$ of $P$ is defined as the minimum number of facets of an extension of $P$. If $Q$ is given by a linear description $Q=\{y\in\bR^{d'}\mid Ay\leq b\}$, then $\{x\in\bR^{d}\mid Ay\leq b,x=\pi(y)\}$ is called an {\it extended formulation} of $P$. Using this terminology, the extension complexity of a polytope is the minimum number of inequality constraints in an extended formulation. Since the fundamental work of Yannakakis~\cite{yannakakis1991expressing}, the extension complexity of various families of polytopes that appear naturally in combinatorial and graph optimization problems has been settled, such as the traveling salesman and cut polytopes~\cite{fiorini2012linear}, the stable set polytope~\cite{fiorini2012linear,goos2018extension}, the matching polytope~\cite{rothvoss2017matching}, and recently the independence polytope of regular matroids~\cite{aprile2022regular}. Taking the quotient of a function is similar to projection; however, instead of retaining only certain coordinates of a solution, the coordinates are partitioned and summed within each partition class.  Consequently, the coupling of matroids, or more generally, submodular functions leads to a different type of ``extended'' formulation that is of independent combinatorial interest.

\subsection{Organization}
\label{sec:structure}

The rest of the paper is organized as follows. In Section~\ref{sec:prelim}, we introduce basic definitions, notation, and relevant results on submodular functions, matroids, and polymatroids. Section~\ref{sec:pairs} is devoted to verifying the existence of couplings for various functions, first focusing on submodular functions in Section~\ref{sec:submod} and then extending these results to polymatroid functions in Section~\ref{sec:polymatroid}. In Section~\ref{sec:matroids}, we show how these observations lead to one of the main results of the paper: the existence of matroidal couplings that are almost tensor products. Section~\ref{sec:local} explores the relation between our construction and matroidal amalgams. To understand the fine line between couplings and tensor products, we establish a necessary and sufficient condition for a coupling to be a tensor product in Section~\ref{sec:tensor}. Finally, in Section~\ref{sec:ingleton}, we provide new necessary conditions for matroid representability. As an application, we verify the existence of functions that are universal with respect to certain properties in Section~\ref{sec:universal}. The proof is analytical and hence is not constructive; we encourage first-time readers to skip the technical parts of Sections~\ref{sec:prep} and~\ref{sec:existence}. As an application of the abstract existence theorem, we deduce that universal functions exist for several set function properties; these examples can be found in Section~\ref{sec:applications}. Finally, we give an explicit construction for a universal coverage function in Section~\ref{sec:infalt}.

\section{Preliminaries}
\label{sec:prelim}

\paragraph{Basic notation.} 
We denote the sets of {\it reals} and {\it integers} by $\bR$ and $\bZ$, and add $+$ or $>0$ as a subscript when considering {\it nonnegatives} or {\it strictly positive} values only. The {\it cardinality of $\bR$} is denoted by $\fc$. For a real number $a\in\bR$, we denote its {\it fractional part} by $\fr(a)$. For a positive integer $k$, we use $[k]\coloneqq \{1,\dots,k\}$ while $[0] = \emptyset$ by convention. Given a ground set $S$, a subset $X\subseteq S$ and $y\in S$, the sets $X\setminus \{y\}$ and $X\cup \{y\}$ are abbreviated as $X-y$ and $X+y$, respectively. Moreover, we often denote a single element set $\{x\}$ by $x$ when this causes no confusion. The {\it complement} of $X$ is denoted by $X^c=S\setminus X$. Given a function $\mu\colon S\to\mathbb{R}_+$, we use the notation $\mu(X)\coloneqq\sum_{x\in X}\mu(x)$. If $S=S_1\times S_2$ for some sets $S_1$ and $S_2$, then for any $Z\subseteq S$, $x\in S_1$ and $y\in S_2$,  we use $Z_x=Z\cap (\{x\}\times S_2)$ and $Z^y=Z\cap (S_1\times \{y\})$ to denote the {\it $x$-} and {\it $y$-fibers} of $Z$, respectively. Furthermore, we use $\pi_i\colon S\to S_i$ for denoting the {\it coordinate maps}, that is, $\pi_1(Z)=\{x\in S_1\mid Z_x\neq\emptyset\}$ and $\pi_2(Z)=\{y\in S_2\mid Z^y\neq\emptyset\}$.

\paragraph{Matroids.}
For basic definitions on matroids, we refer the reader to~\cite{oxley2011matroid}. A {\it matroid} $M=(S,r)$ is defined by its finite {\it ground set} $S$ and its {\it rank function} $r\colon2^S\to\bZ_+$ that satisfies the {\it rank axioms}: (R1) $r(\emptyset)=0$, (R2) $X\subseteq Y\Rightarrow r(X)\leq r(Y)$, (R3) $r(X)\leq |X|$, and (R4) $r(X)+r(Y)\geq r(X\cap Y)+r(X\cup Y)$. Here, (R1) is a standard normalizing assumption, while (R2) requires the rank function to be increasing, (R3) its subcardinality, and (R4) its submodularity. The {\it rank} of the matroid is $r(S)$, and by the {\it normalized rank function} of $M$ we mean the set function $r/r(S)$. A subset $X\subseteq S$ is called {\it independent} if $|X|=r(X)$. An inclusionwise minimal non-independent set forms a {\it circuit}, while a {\it loop} is a circuit consisting of a single element. A {\it flat} is a set $F\subseteq S$ such that $r(F+e)=r(F)+1$ holds for every $e\in S\setminus F$. A matroid is called {\it modular} if $r(F_1)+r(F_2)=r(F_1\cap F_2)+r(F_1\cup F_2)$ for any pair of flats $F_1,F_2$. For sets $X,Y\subseteq S$, we say that $X$ {\it spans} $Y$ in $M$ if $r(X\cup Y)=r(X)$. 

Given a subset $S'\subseteq S$, the {\it restriction} of $M$ to $S'$ is a matroid $M|S'=(S',r')$ whose rank function is the restriction of $r$ to subsets of $S'$. The {\it contraction of $S'$} results in a matroid $M/S'=(S\setminus S',r')$ with rank function $r'(X)=r(X\cup S')-r(S')$ for each $X\subseteq S\setminus S'$. A matroid $N$ that can be obtained from $M$ by a sequence of restrictions and contractions is called a {\it minor} of $M$. The {\it direct sum} $M_1\oplus M_2$ of matroids $M_1=(S_1,r_1)$ and $M_2=(S_2,r_2)$ on disjoint ground sets is the matroid $M=(S_1\cup S_2,r)$ whose independent sets are the disjoint unions of an independent set of $M_1$ and an independent set of $M_2$, that is, $r(X)=r_1(X\cap S_1)+r_2(X\cap S_2)$ for $X\subseteq S_1\cup S_2$. 

A matroid is {\it linear} or {\it representable} over some field $\bF$ if there exists a family of vectors from a vector space over $\bF$ whose linear independence relation is the same as the independence relation of the matroid. If the matroid is representable over any field, then it is called {\it regular}. A {\it uniform matroid} of rank $r$ over a ground set of size $n$ has rank function $r(X)=\min\{|X|,r\}$ and is denoted by $U_{r,n}$. 
Given pairwise disjoint sets $S_1\cup\dots\cup S_q\subseteq S$, the corresponding {\it partition matroid} $M=(S,r)$ is defined by setting $r(X)=|\{i\mid X\cap S_i\neq\emptyset\}|$ for all $X\subseteq S$\footnote{In the literature, partition matroids are often defined in a more general form where $r(X)=\sum_{i=1}^q\min\{|X\cap S_i|,g_i\}$ for some $g_1,\dots,g_q\in\bZ_+$. However, in this paper, we are interested only in the case when $g_i=1$ for all $i\in[q]$ except for at most one which is $0$.}. It is not difficult to see that both uniform and partition matroids are representable over the reals.

\paragraph{Submodular and polymatroid functions.}
By $S$, we always denote a finite ground set. A set function $\varphi \colon 2^S \to \bR$ is {\it normalized} if $\varphi(S)=1$. We say that $\varphi$ is {\it increasing} if $X\subseteq Y$ implies $\varphi(X)\leq\varphi(Y)$, and {\it decreasing} if $X\subseteq Y$ implies $\varphi(X)\geq\varphi(Y)$. The function $\varphi$ is {\it submodular} if 
\begin{equation*}
\varphi(X)+\varphi(Y) \geq \varphi(X \cap Y)+\varphi(X \cup Y)    
\end{equation*} 
for all $X,Y\subseteq S$, {\it supermodular} if $-\varphi$ is submodular, and {\it modular} if it is both sub- and supermodular. 

A function $\varphi\colon 2^S\to\bR_+$ is called a {\it polymatroid function} if it is an increasing, submodular function with $\varphi(\emptyset)=0$. Furthermore, if $\varphi(X) \leq k \cdot |X|$ holds for some $k \in \bR_+$ and for every subset $X \subseteq S$, then $\varphi$ is a {\it $k$-polymatroid function}. Observe that polymatroid functions retain three of the four basic properties of matroid rank functions: (R1), (R2), and (R4), omitting only the subcardinality and integrality requirement. Thus, matroid rank functions form a subclass of polymatroid functions. As a sort of reverse statement, Helgason~\cite{helgason2006aspects} showed that every integer-valued polymatroid function can be obtained as the quotient of a matroid rank function. 

\begin{prop}[Helgason]\label{prop:helgason}
If $\varphi\colon 2^S\to\bZ_+$ is an integer-valued polymatroid function over a ground set $S$, then there exists a matroid $M = (S', r)$ and a mapping $\theta\colon S' \to S$ such that $\varphi(X) = r(\theta^{-1}(X))$ for all $X \subseteq S$.
\end{prop}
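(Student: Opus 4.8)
The plan is to realize $\varphi$ as a matroid rank function pulled back along a map $\theta$ by expanding each element $s\in S$ into a cloud of $\varphi(s)$ parallel copies and then certifying that the natural extension is indeed a matroid. Concretely, for each $s \in S$ let $c_s \coloneqq \varphi(s) = \varphi(\{s\})$, set $S' \coloneqq \bigsqcup_{s\in S}\{s\}\times[c_s]$, and let $\theta\colon S'\to S$ be the projection $\theta(s,i)=s$. For $X'\subseteq S'$ define
\begin{equation*}
r(X') \coloneqq \min_{Y\subseteq S}\Bigl(\varphi(Y) + |X'\setminus \theta^{-1}(Y)|\Bigr).
\end{equation*}
First I would check $\theta^{-1}(X)$ is handled correctly: for $X\subseteq S$, taking $Y=X$ gives $r(\theta^{-1}(X))\le \varphi(X)$, and since $\varphi$ is increasing, for any $Y\subseteq S$ we have $\varphi(Y)+|\theta^{-1}(X)\setminus\theta^{-1}(Y)| = \varphi(Y) + \sum_{s\in X\setminus Y}c_s \ge \varphi(Y)+\varphi(X\setminus Y)\ge \varphi(Y\cup(X\setminus Y))\ge \varphi(X)$ — wait, more carefully, $\varphi(Y)+\varphi(X\setminus Y)\ge\varphi(Y\cup(X\setminus Y)) + \varphi(Y\cap(X\setminus Y)) = \varphi(Y\cup(X\setminus Y))\ge\varphi(X)$ using submodularity, $\varphi(\emptyset)=0$, and monotonicity, and $\sum_{s\in X\setminus Y}c_s = \sum_{s\in X\setminus Y}\varphi(s)\ge\varphi(X\setminus Y)$ by subadditivity (a consequence of submodularity and $\varphi(\emptyset)=0$). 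Hence $r(\theta^{-1}(X))=\varphi(X)$ exactly, which is the desired identity once we know $r$ is a matroid rank function.

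Next I would verify that $r$ satisfies the rank axioms (R1)–(R4). Normalization (R1) is immediate from $\varphi(\emptyset)=0$. Monotonicity (R2) and subcardinality (R3) follow directly from the defining formula: increasing $X'$ by one element increases each term by at most $1$, and taking $Y=\emptyset$ gives $r(X')\le|X'|$ since $\varphi(\emptyset)=0$. For (R4), submodularity of $r$, the cleanest route is to recognize $r$ as the rank function of the matroid induced from the polymatroid $(S,\varphi)$ by the map $\theta$ — this is exactly the ``Dilworth truncation''/``matroid induced by a polymatroid via a bipartite-graph-style incidence'' construction of Edmonds–Helgason. If I want a self-contained argument, I would prove submodularity of the function $X'\mapsto\min_Y(\varphi(Y)+|X'\setminus\theta^{-1}(Y)|)$ directly: pick minimizers $Y_1$ for $X'_1$ and $Y_2$ for $X'_2$, and use $Y_1\cap Y_2$, $Y_1\cup Y_2$ together with submodularity of $\varphi$ and the modularity of $|\cdot|$ on the uniform-matroid side, a standard but slightly fiddly exchange argument.

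The main obstacle — and the step I'd be most careful about — is exactly this verification that the "min" formula yields a genuine (integer-valued) rank function; the nontrivial input is the submodularity of $\varphi$ (for (R4) and to pin down $r(\theta^{-1}(X))=\varphi(X)$ rather than something smaller), while integrality of $\varphi$ is what guarantees $c_s\in\bZ_+$ so that $S'$ and $r$ are integral. Everything else (defining $\theta$, reading off the quotient identity) is bookkeeping. An alternative, perhaps cleaner presentation avoiding the explicit formula: build $M$ by a greedy/independent-set description — declare $X'\subseteq S'$ independent iff $|X'\cap\theta^{-1}(Y)|\le\varphi(Y)$ for all $Y\subseteq S$ — and invoke the standard fact (Edmonds) that this family, being the independent sets of the intersection of the free-matroid-type constraint with a polymatroid, forms a matroid whose rank function is the displayed minimum; then the identity $r(\theta^{-1}(X))=\varphi(X)$ is the tightness of the constraint at $Y=X$ combined with the lower bound above. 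I would present whichever of these is shortest, most likely citing the polymatroid-to-matroid construction and then just checking the quotient equality.
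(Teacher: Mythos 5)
The paper does not prove this proposition at all: it is quoted as a known result with a citation to Helgason, so there is no in-paper argument to compare against. Judged on its own, your construction is the classical one (expand each $s$ into $\varphi(\{s\})$ parallel copies and take the matroid induced by the polymatroid along $\theta$), and it works. Your verification of $r(\theta^{-1}(X))=\varphi(X)$ is correct: $Y=X$ gives the upper bound, and the lower bound follows from subadditivity ($\sum_{s\in X\setminus Y}\varphi(\{s\})\ge\varphi(X\setminus Y)$) plus submodularity, $\varphi(\emptyset)=0$, and monotonicity, exactly as you wrote. The one step you leave as a sketch, submodularity of $X'\mapsto\min_Y\bigl(\varphi(Y)+|X'\setminus\theta^{-1}(Y)|\bigr)$, does go through along the lines you indicate: with minimizers $Y_1,Y_2$ one uses $\varphi(Y_1)+\varphi(Y_2)\ge\varphi(Y_1\cap Y_2)+\varphi(Y_1\cup Y_2)$ together with the pointwise inequality $|X_1'\setminus\theta^{-1}(Y_1)|+|X_2'\setminus\theta^{-1}(Y_2)|\ge|(X_1'\cap X_2')\setminus\theta^{-1}(Y_1\cap Y_2)|+|(X_1'\cup X_2')\setminus\theta^{-1}(Y_1\cup Y_2)|$, which is a routine case check on each element of $S'$; alternatively, citing Edmonds' induced-matroid theorem, as in your second variant, is legitimate. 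One small point worth making explicit: the paper's axioms (R1)--(R4) plus integrality do suffice to conclude $r$ is a matroid rank function, since the unit-increase property follows from submodularity and subcardinality on singletons, so your checklist is complete.
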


The {\it base polyhedron} of a nonnegative submodular function $\varphi\colon 2^S\to \bR_+$ is defined as \[B(\varphi)\coloneqq \{x \in \bR^S_+ \mid x(S) = \varphi(S),~
x(Z) \le \varphi(Z) \text{ for every $Z\subseteq S$}\}.\]
The following is a fundamental result in polyhedral combinatorics, see e.g.~\cite[Corollary~14.2.3]{frank2011connections}.

\begin{prop}\label{prop:empty}
If $\varphi$ is a nonnegative submodular function, then $B(\varphi)$ is non-empty. Furthermore, if $\varphi$ is integer-valued, then the vertices of $B(\varphi)$ are integer.
\end{prop}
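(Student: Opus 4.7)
The plan is to produce an explicit element of $B(\varphi)$ via Edmonds' greedy construction, and then to argue integrality through the characterization of vertices as greedy vectors. Without loss of generality I may assume $\varphi(\emptyset) = 0$, since subtracting a constant from $\varphi$ preserves submodularity and, after renormalizing $\varphi(S)$ accordingly, does not change the feasibility question. Fix an ordering $s_1, s_2, \dots, s_n$ of $S$ and define
\[
    x(s_i) \coloneqq \varphi(\{s_1,\dots,s_i\}) - \varphi(\{s_1,\dots,s_{i-1}\}), \qquad i=1,\dots,n.
\]
Telescoping immediately gives $x(S) = \varphi(S)$. For the subset inequalities, given $Z = \{s_{i_1},\dots,s_{i_k}\}$ with $i_1<\dots<i_k$, I would apply submodularity (in the form ``marginal contributions decrease as the base grows'') to bound
\[
    \varphi(\{s_1,\dots,s_{i_j}\}) - \varphi(\{s_1,\dots,s_{i_j-1}\}) \leq \varphi(\{s_{i_1},\dots,s_{i_j}\}) - \varphi(\{s_{i_1},\dots,s_{i_{j-1}}\})
\]
for each $j$; summing over $j$ collapses to $x(Z)\leq \varphi(Z)$. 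The nonnegativity $x \in \bR^S_+$ required by the excerpt's definition can be ensured by choosing the ordering so that every prefix $\{s_1,\dots,s_i\}$ maximizes $\varphi$ along the chain, which is possible precisely because $\varphi$ is nonnegative; equivalently, one may work with the monotone hull $\hat\varphi(X)\coloneqq\max_{Y\subseteq X}\varphi(Y)$, check that its greedy vector lies in $B(\varphi)$, and transfer the conclusion.

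For the integrality statement, observe that the greedy vector produced above is integer whenever $\varphi$ is integer, because each coordinate is a difference of integer values of $\varphi$. To promote this from a single integer point to integrality of all vertices, I would invoke the classical result of Edmonds identifying every vertex of $B(\varphi)$ as a greedy vector arising from some ordering of $S$; since all such vectors are integer, so are the vertices. Alternatively, one can argue that the defining system is totally dual integral, yielding the same conclusion.

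The main obstacle is the simultaneous imposition of $x\geq 0$ and $x(Z)\leq \varphi(Z)$, which for general (non-monotone) nonnegative submodular $\varphi$ is the one point where the nonnegativity hypothesis is genuinely used; once this is arranged via a judicious ordering or via passage to the monotone hull, the remainder of the proof is the standard telescoping argument. The integrality half is essentially automatic given the vertex characterization, so little additional work is required there.
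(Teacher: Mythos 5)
The paper offers no proof here---it just cites Frank's monograph---so your argument must stand on its own, and its central step does not. The greedy framework, telescoping, and marginal-decrease computation are all correct, but the place where you invoke nonnegativity of $\varphi$ to secure $x\ge 0$ is precisely where the argument breaks. Consider $S=\{1,2\}$ with $\varphi(\emptyset)=0$, $\varphi(\{1\})=\varphi(\{2\})=2$, and $\varphi(S)=1$; this is nonnegative and submodular, yet the only two greedy vectors are $(2,-1)$ and $(-1,2)$, so no ordering --- ``judicious'' or otherwise --- produces a nonnegative greedy vector, even though $B(\varphi)$ is nonempty (it contains $(1,0)$). The fallback $\hat\varphi(X)\coloneqq\max_{Y\subseteq X}\varphi(Y)$ fails for the same example: $\hat\varphi(S)=2>1=\varphi(S)$, so the greedy vector of $\hat\varphi$ violates the equality $x(S)=\varphi(S)$ and is not in $B(\varphi)$. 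You want the \emph{other} monotonization, $\varphi^{\downarrow}(X)\coloneqq\min_{Y\supseteq X}\varphi(Y)$: it is increasing (so its greedy vector is nonnegative), submodular, satisfies $\varphi^{\downarrow}\le\varphi$ and $\varphi^{\downarrow}(S)=\varphi(S)$, and (after the harmless resetting $\varphi(\emptyset)\coloneqq 0$, which leaves $B(\varphi)$ unchanged since the $\emptyset$-constraint is $0\le\varphi(\emptyset)$) also $\varphi^\downarrow(\emptyset)=0$, so its greedy vector lies in $B(\varphi)$. This is exactly the ``take the largest increasing minorant'' device the paper itself deploys in the proof of \cref{thm:poly}. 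Your ``subtract a constant and renormalize'' version of the reduction to $\varphi(\emptyset)=0$ is also not an equivalence, since it shifts the upper-bound constraints as well.

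The integrality half has the same defect. Edmonds' identification of vertices with greedy vectors is for the base polyhedron \emph{without} the added constraint $x\ge 0$; once $x\ge 0$ is imposed, new vertices appear that are not greedy vectors (in the example, the vertices of $B(\varphi)$ are $(1,0)$ and $(0,1)$, neither of which is a greedy vector of $\varphi$). So ``every vertex is a greedy vector, hence integer'' does not go through. The alternative you mention in passing---total dual integrality---is the correct route (the submodular system $\{x(Z)\le\varphi(Z)\}$ is TDI, and intersecting a TDI system with a box such as $x\ge 0$ preserves TDI), so it should be your actual argument rather than a throwaway remark.
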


\paragraph{Coverage functions.}
A {\it coverage function} $\varphi\colon 2^S\to\bR_+$ is defined by a bipartite graph $G=(S,T;E)$ and weights $w\in\bR^T_+$ by setting $\varphi(X)=\sum_{t\in N(X)}w_t$ for $X\subseteq S$, where $N(X)$ denotes the set of neighbours of $X$ in $G$. It is less known that coverage functions also have an alternative characterization involving inequalities~\cite{bgils2024decomposition}. Specifically, $\varphi$ is a coverage function if and only if $\varphi(\emptyset)=0$ and, for any $k\in\bZ_{>0}$,
\begin{equation}
\sum_{K \subseteq [k]} (-1)^{|K|}\varphi\bigl(A_0 \cup \bigcup_{i \in K} A_i\bigr)\leq 0 \tag*{\textsc{($k$-Alt)}}\label{eq:kalt}
\end{equation}
holds for any choice of subsets $A_0, A_1, \ldots, A_k \subseteq S$. It is worth noting that for $k = 1$ and $k = 2$, inequality \ref{eq:kalt} is equivalent with the property that $\varphi$ is increasing and increasing submodular, respectively. In general, for $k\geq 3$, the inequality describes a strengthened form of increasing submodularity\footnote{In~\cite{bgils2024decomposition}, it was shown that for functions over a finite domain, the class of coverage functions coincide with that of infinite-alternating functions, an important subclass of submodular functions that was studied by Choquet~\cite{choquet1954theory} in the analytic setting.}.  For any non-empty $A\subseteq S$, let $\varphi_A \colon 2^S \to \bR$ be defined as
\begin{equation} 
   \varphi_A(X)=\begin{cases}
    0 &\text{if $X \cap A=\emptyset$,} \\
    1 &\text{if $X \cap A\neq\emptyset$.}
    \end{cases}
    \label{eq:extremal_def}
\end{equation}
It is not difficult to see that $\varphi_A$ is a normalized coverage function. Choquet~\cite[Section 43]{choquet1954theory} showed that these set functions correspond to the extreme rays of the convex cone of coverage functions; see also~\cite{lovasz2023submodular} for a different proof.

\begin{prop}[Choquet]\label{prop:extremal}
    The set of normalized extremal elements of the convex cone of coverage functions over a finite set $S$ is $\{\varphi_A\mid \emptyset\neq A\subseteq S\}$.
\end{prop}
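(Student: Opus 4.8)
The plan is to argue, working from the combinatorial (bipartite-graph) definition of coverage functions, that the cone $\cC$ of coverage functions over $S$ is the \emph{simplicial} cone generated by $\{\varphi_A:\emptyset\neq A\subseteq S\}$, so that its extreme rays are precisely the rays through these generators, which are pairwise distinct and already normalized. First I would show that $\cC$ equals the set of nonnegative combinations of the $\varphi_A$'s. Given a coverage function $\varphi$ defined by $G=(S,T;E)$ and $w\in\bR^T_+$, write $A_t\coloneqq N(t)\subseteq S$ for the neighbourhood of $t\in T$; since $t\in N(X)$ iff $A_t\cap X\neq\emptyset$, we have $\varphi=\sum_{t\in T}w_t\varphi_{A_t}$, where the target vertices with $A_t=\emptyset$ contribute the zero function and can be discarded. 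Conversely, each $\varphi_A$ with $\emptyset\neq A\subseteq S$ is itself a coverage function (one target vertex joined to all of $A$ with weight $1$), as noted after~\eqref{eq:extremal_def}. Hence $\cC=\{\sum_{\emptyset\neq A\subseteq S}\lambda_A\varphi_A:\lambda_A\ge 0\}$.

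Next I would prove that the family $\{\varphi_A:\emptyset\neq A\subseteq S\}$ is linearly independent in $\bR^{2^S}$. Suppose $\sum_A\lambda_A\varphi_A\equiv 0$ with real coefficients, all sums ranging over nonempty subsets of $S$. Evaluating at an arbitrary $X\subseteq S$ gives $0=\sum_{A:\,A\cap X\neq\emptyset}\lambda_A=L-\sum_{\emptyset\neq A\subseteq X^c}\lambda_A$, where $L\coloneqq\sum_A\lambda_A$; hence $\sum_{\emptyset\neq A\subseteq Z}\lambda_A=L$ for every $Z\subseteq S$, and taking $Z=\emptyset$ forces $L=0$. An induction on $|Z|$ now yields $\lambda_Z=0$ for every nonempty $Z$: from $0=\sum_{\emptyset\neq A\subseteq Z}\lambda_A=\lambda_Z+\sum_{\emptyset\neq A\subsetneq Z}\lambda_A$ together with the inductive hypothesis (every proper nonempty subset of $Z$ is strictly smaller), we get $\lambda_Z=0$.

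To finish, I would invoke the standard fact that a finitely generated cone whose generators are linearly independent is simplicial, with extreme rays exactly the rays through the generators; concretely, if $\varphi_A=\psi_1+\psi_2$ with $\psi_1,\psi_2\in\cC$, expand $\psi_j=\sum_B\mu^j_B\varphi_B$ with $\mu^j_B\ge 0$ and match coefficients using the linear independence above to conclude $\mu^1_B=\mu^2_B=0$ for $B\neq A$, i.e.\ $\psi_1,\psi_2\in\bR_+\varphi_A$. Thus the extreme rays of $\cC$ are exactly $\{\bR_+\varphi_A:\emptyset\neq A\subseteq S\}$. Every nonzero coverage function has $\varphi(S)>0$ (in the decomposition above, some target vertex $t$ with $A_t\neq\emptyset$ has $w_t>0$, whence $\varphi(S)\ge w_t>0$), so each extreme ray contains a unique element with $\varphi(S)=1$; since $\varphi_A(S)=1$ for every nonempty $A$ and the $\varphi_A$ are pairwise distinct (if $a\in A\setminus A'$ then $\varphi_A(\{a\})=1\neq 0=\varphi_{A'}(\{a\})$), the normalized extremal elements of $\cC$ are precisely $\{\varphi_A:\emptyset\neq A\subseteq S\}$.

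The argument is essentially routine once the bipartite-graph description is used, since that description immediately decomposes every coverage function into the $\varphi_A$'s; the only genuine content is the linear independence of the $\varphi_A$'s, equivalently the assertion that none of them is a redundant cone generator, and this is settled by the short inductive (zeta-transform) computation above. I expect the main obstacle to be only one of presentation: starting instead from the inequality characterization~\ref{eq:kalt} would make the decomposition into the $\varphi_A$'s the hard step, so committing to the combinatorial definition at the outset is what keeps the proof short.
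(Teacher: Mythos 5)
Your argument is correct and complete: the decomposition $\varphi=\sum_{t\in T}w_t\varphi_{N(t)}$ from the bipartite definition, the linear independence of the $\varphi_A$ via the inductive (M\"obius-type) computation, and the standard fact about simplicial cones together do establish that the extreme rays are exactly $\bR_+\varphi_A$, and the normalization step is handled properly. Note, however, that the paper does not prove this proposition at all -- it is imported from Choquet, whose original argument lives in the analytic setting of (infinitely) alternating capacities, with a different proof referenced in the submodularity-limit paper cited alongside it. So your route is genuinely different from the sources the paper leans on: by committing to the combinatorial bipartite-graph definition, the generation step becomes a one-line rewriting and the only real content is the linear independence of the $\varphi_A$ in $\bR^{2^S}$, which you settle by a short induction; this buys a fully self-contained, finite-dimensional proof suitable for the finite ground set case. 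What it does not buy is Choquet's level of generality (the analytic statement for set functions on infinite algebras), and it silently uses the equivalence between the bipartite definition and the inequality characterization \textsc{($k$-Alt)} that the paper takes from the cited decomposition paper -- worth flagging if the proposition is to be applied, as in Theorem~\ref{thm:infk}, to functions presented via the inequalities rather than via a graph.
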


For further details on coverage functions, we recommend~\cite{choquet1954theory,lovasz2023submodular,bgils2024decomposition}. 

\paragraph{From finite to infinite domain.}
The above definitions extend naturally to set functions over an infinite domain. For clarity, we denote the ground set by $J$ rather than $S$ whenever it is infinite. In this context, let $(J, \cB)$ be a set algebra, i.e., $\cB$ is a family of subsets of $J$ with $\emptyset \in \cB$ and closed under taking complements and finite unions. Note that these imply that $\cB$ is also closed under taking finite intersections. A {\it set function} $\varphi$ assigns a real value to each member of $\cB$. By a slight abuse of notation, we call the members of $\cB$ {\it measurable}. For set algebras $(J_1,\cB_1)$ and $(J_2,\cB_2)$, a function $f\colon J_1\to J_2$ is {\it measurable} if $f^{-1}(B)\in\cB_1$ for all $B\in\cB_2$. For a set algebra $(J, \cB)$, we call a nonempty set $B \in \cB$ an {\it atom} of the algebra if $A \subseteq B$, $A \in \cB$ implies $A =\emptyset$ or $A=B$. We refer to a finitely additive measure, which is not necessarily nonnegative, as a {\it charge}. Properties of set functions, such as being normalized, increasing, decreasing, submodular, supermodular, modular and being a polymatroid can be generalized to functions over an infinite domain in a straightforward manner. Moreover, using the inequality-based definition, this is also doable for coverage functions. We call a set function {\it finite} if its domain is finite, and a set function is {\it bounded}, if the range of the set function is bounded. 

Let $(J_1, \cB_1)$ and $(J_2, \cB_2)$ be measurable spaces, and let $J = J_1 \times J_2$. The {\it product algebra} $\cB$ on $J$ is defined as the algebra generated by $\{X_1 \times J_2 \mid X_1 \in \cB_1\} \cup \{J_1 \times X_2 \mid X_2 \in \cB_2\}$, and it is denoted by $\cB = \cB_1 \otimes \cB_2$, see e.g.~\cite{folland1999real}. We refer to the measurable space $(J, \cB)$ thus obtained as the {\it product} of measurable spaces $(J_1, \cB_1)$ and $(J_2, \cB_2)$. Let $\varphi_1 \colon \cB_1 \to \bR$ and $\varphi_2 \colon \cB_2 \to \bR$ be set functions. Then, $\varphi \colon \cB \to \bR$ is a {\it coupling} of $\varphi_1$ and $\varphi_2$ if $\varphi(X_1 \times J_2) = \varphi_1(X_1) \cdot \varphi_2(J_2)$ for every $X_1 \in \cB_1$ and $\varphi(J_1 \times X_2) = \varphi_1(J_1) \cdot \varphi_2(X_2)$ for every $X_2 \in \cB_2$. Similarly to the finite case, for $x\in J_1$, $y\in J_2$ and $Z\in \cB$, we use $Z_x$ and $Z^y$ for denoting the $x$- and $y$-fibers of $Z$, and $\pi_1, \pi_2$ denote the coordinate maps. By $\mathbbm{1}_Z$ we denote the indicator function of a set $Z$.

\section{Coupling submodular functions}
\label{sec:pairs}

In this section we show that, unlike the tensor product, two matroids always admit a coupling. We first prove this statement for nonnegative submodular functions in Section~\ref{sec:submod} and then extend the discussion to polymatroid functions in Section~\ref{sec:polymatroid}. Matroids and the relation between coupling and tensor products are addressed in Section~\ref{sec:matroids}. We study $k$-alternating and coverage functions in Section~\ref{sec:infinite}. Finally, in Section~\ref{sec:infground} we explain how all these results can be extended to functions over an infinite ground set.

\subsection{Submodular functions}
\label{sec:submod}

As a first step, we verify that couplings exist for submodular functions. Let $\varphi_1\colon 2^{S_1} \to \bR_+$ and $\varphi_2 \colon 2^{S_2} \to \bR_+$ be submodular functions over ground sets $S_1$ and $S_2$, and let $S = S_1\times S_2$. Recall that for any $Z\subseteq S$, $x\in S_1$ and $y\in S_2$,  we use $Z_x=Z\cap (\{x\}\times S_2)$ and $Z^y=Z\cap (S_1\times \{y\})$ to denote the $x$- and $y$-fibers of $Z$, respectively. For any $\mu_1\colon S_1 \to \bR_+$ and $\mu_2\colon S_2 \to \bR_+$, define the set function $b$ on $S$ by

\begin{equation} \label{eq:submod_coupling}
    b(Z) \coloneqq\sum_{e_1\in S_1}\mu_1(e_1) \cdot \varphi_2(\pi_2(Z_{e_1}))+\sum_{e_2\in S_2}\mu_2(e_2)\cdot  \varphi_1(\pi_1(Z^{e_2})) -\sum_{(e_1,e_2)\in Z} \mu_1(e_1) \cdot \mu_2(e_2).
\end{equation}
Note that, by the nonnegativity of $\varphi_1$ and $\varphi_2$, $\mu_1$ and $\mu_2$ can always be chosen such that $\mu_1(S_1)=\varphi_1(S_1)$ and $\mu_2(S_2)=\varphi_2(S_2)$. Therefore, the next theorem settles the existence of a coupling of two nonnegative submodular functions whenever $\varphi_1(\emptyset)=\varphi_2(\emptyset)=0$.

\begin{thm} \label{thm:submod}
Let $\varphi_1\colon 2^{S_1} \to \bR_+$ and $\varphi_2 \colon 2^{S_2} \to \bR_+$ be submodular functions over ground sets $S_1$ and $S_2$, respectively, satisfying $\varphi_1(\emptyset)=0$ and $\varphi_2(\emptyset)=0$. Furthermore, let $\mu_1\colon S_1 \to \bR_+$ and $\mu_2\colon S_2 \to \bR_+$ be such that $\mu_1(S_1)=\varphi_1(S_1)$ and $\mu_2(S_2)=\varphi_2(S_2)$. Then, the function $b$ defined in~\eqref{eq:submod_coupling} is a submodular coupling of $\varphi_1$ and $\varphi_2$.
\end{thm}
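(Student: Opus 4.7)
My plan is to verify the two defining properties separately: the marginal (coupling) conditions and submodularity. Both are essentially direct given the clean additive structure of the defining formula.

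For the marginal conditions, I would substitute $Z = X_1 \times S_2$ into \eqref{eq:submod_coupling} and evaluate each of the three sums. In the first sum, the fiber $Z_{e_1}$ equals $\{e_1\} \times S_2$ when $e_1 \in X_1$ and is empty otherwise, so (using $\varphi_2(\emptyset) = 0$) the first sum collapses to $\mu_1(X_1)\cdot \varphi_2(S_2)$. In the second sum, $\pi_1(Z^{e_2}) = X_1$ for every $e_2 \in S_2$, so the sum equals $\mu_2(S_2)\cdot \varphi_1(X_1) = \varphi_2(S_2)\cdot \varphi_1(X_1)$. The third sum yields $\mu_1(X_1)\cdot \mu_2(S_2) = \mu_1(X_1)\cdot \varphi_2(S_2)$. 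These cancel to leave $\varphi_1(X_1)\cdot \varphi_2(S_2)$, which is exactly the first marginal condition; the second one follows symmetrically.

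For submodularity, the third sum in \eqref{eq:submod_coupling} is a modular function of $Z$ (a weighted count of elements), so it can be ignored. For a fixed $e_1 \in S_1$, I would observe that the map $Z \mapsto \pi_2(Z_{e_1})$ is a lattice homomorphism from $(2^S,\cup,\cap)$ to $(2^{S_2},\cup,\cap)$: indeed $(Z\cup W)_{e_1} = Z_{e_1}\cup W_{e_1}$ and $(Z\cap W)_{e_1} = Z_{e_1}\cap W_{e_1}$, and $\pi_2$ restricted to $\{e_1\}\times S_2$ is a bijection preserving unions and intersections. Composing this with the submodular function $\varphi_2$ yields a submodular function of $Z$, and the first sum is a nonnegative combination of such functions, hence submodular. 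The second sum is submodular by the symmetric argument. Adding a modular function to a submodular function preserves submodularity, so $b$ is submodular.

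There is no real obstacle here: the definition was clearly engineered so that the ``diagonal'' modular term exactly cancels the double-counted marginal contributions, leaving the product $\varphi_1(X_1)\cdot \varphi_2(S_2)$ on the fibers, while submodularity is automatic from the fact that fiber-projection is a lattice homomorphism. The only small point to state explicitly is the use of $\varphi_1(\emptyset) = \varphi_2(\emptyset) = 0$ to handle empty fibers cleanly in the first two sums. Nonnegativity of $\mu_1,\mu_2$ is used only in the submodularity step (to preserve the direction of the inequality under nonnegative combinations), and the existence of such $\mu_1,\mu_2$ with the required total mass is guaranteed by the nonnegativity of $\varphi_1(S_1)$ and $\varphi_2(S_2)$.
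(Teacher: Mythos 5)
Your proof is correct and takes essentially the same approach as the paper: the paper also observes that fiber-projection commutes with unions and intersections (your lattice-homomorphism observation, stated there as four explicit identities), concludes submodularity from nonnegative combinations minus a modular term, and verifies the coupling property by direct evaluation on product sets. The only cosmetic difference is that the paper evaluates $b(Y_1 \times Y_2)$ for arbitrary $Y_1, Y_2$ and then specializes to $Y_1 = S_1$ or $Y_2 = S_2$, whereas you substitute $Z = X_1 \times S_2$ directly; both computations are identical in substance.
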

\begin{proof}
Take arbitrary $X,Y\subseteq S_1\times S_2$, $e_1\in S_1$ and $e_2\in S_2$. Then we have 
\begin{align*}
    \pi_2((X\cap Y)_{e_{1}})&=\pi_2(X_{e_{1}})\cap \pi_2(Y_{e_{1}}), \\
    \pi_2((X\cup Y)_{e_{1}})&=\pi_2(X_{e_{1}})\cup \pi_2(Y_{e_{1}}), \\
    \pi_1((X\cap Y)^{e_{2}})&=\pi_1(X^{e_{2}})\cap \pi_1(Y^{e_{2}}), \\
    \pi_1((X\cup Y)^{e_{2}})&=\pi_1(X^{e_{2}})\cup \pi_1(Y^{e_{2}}). 
\end{align*}
Together with the submodularity of $\varphi_1$ and $\varphi_2$ and the nonnegativity of $\mu_1$ and $\mu_2$, this implies that $b$ is a nonnegative combination of submodular functions minus a modular function, and is therefore submodular.

It remains to show that $b$ is a coupling. For each $Y_1\subseteq S_1$ and $Y_2\subseteq S_2$, we get
\begin{align} 
    b(Y_1\times Y_2)
    &=\mu_1(Y_1)\cdot\varphi_2(Y_2)+\mu_2(Y_2)\cdot\varphi_1(Y_1)-\mu_1(Y_1)\cdot\mu_2(Y_2) \label{eq:Y1Y2_first}\\
    &=\varphi_1(Y_1)\cdot\varphi_2(Y_2)- (\varphi_1(Y_1)-\mu_1(Y_1))\cdot (\varphi_2(Y_2)-\mu_2(Y_2)). \label{eq:Y1Y2}
\end{align}
Thus, by $\mu_1(S_1)=\varphi_1(S_1)$ and $\mu_2(S_2)=\varphi_2(S_2)$, we have $b(Y_1\times Y_2)=\varphi_1(Y_1)\cdot \varphi_2(Y_2)$ whenever $Y_1=S_1$ or $Y_2=S_2$, concluding the proof of the theorem.
\end{proof}

It is worth emphasizing that if $\varphi_1$ and $\varphi_2$ are integer-valued, they admit an integer-valued submodular coupling. This follows by choosing $\mu_1$ and $\mu_2$ to be integer-valued in the proof.

\subsection{Polymatroid functions}
\label{sec:polymatroid}

A noteworthy feature of the proof of Theorem~\ref{thm:submod} is that it relies on two arbitrarily chosen nonnegative modular functions satisfying $\mu_1(S_1)=\varphi_1(S_1)$ and $\mu_2(S_2)=\varphi_2(S_2)$. However, the resulting function $b$ may not be increasing, even if $\varphi_1$ and $\varphi_2$ are so. Therefore, to show that polymatroid functions admit a polymatroid coupling, $\mu_1$ and $\mu_2$ must be chosen carefully.

\begin{thm} \label{thm:poly}
    Let $\varphi_1\colon 2^{S_1} \to \bR_+$ be a $k_1$-polymatroid and $\varphi_2\colon 2^{S_2} \to \bR_+$ be a $k_2$-polymatroid function over ground sets $S_1$ and $S_2$, respectively. Then, $\varphi_1$ and $\varphi_2$ have a $(k_1\cdot k_2)$-polymatroid coupling which is integer-valued if $\varphi_1$ and $\varphi_2$ are integer-valued.
\end{thm}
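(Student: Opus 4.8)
The plan is to reuse the explicit formula of Theorem~\ref{thm:submod}, but to fix the auxiliary modular functions carefully and then monotonize the outcome. Since a polymatroid function is nonnegative and vanishes at $\emptyset$, Proposition~\ref{prop:empty} lets me pick $\mu_1\in B(\varphi_1)$ and $\mu_2\in B(\varphi_2)$, chosen to be integer vertices if $\varphi_1,\varphi_2$ are integer-valued. The three facts I will use about these are $\mu_i(S_i)=\varphi_i(S_i)$, $\mu_i(Y)\le\varphi_i(Y)$ for all $Y$, and, consequently, $\mu_i(\{e\})\le\varphi_i(\{e\})\le k_i$. Let $b$ be the function of~\eqref{eq:submod_coupling} for these $\mu_1,\mu_2$; Theorem~\ref{thm:submod} already gives that $b$ is a submodular coupling of $\varphi_1,\varphi_2$. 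Regrouping the last term of~\eqref{eq:submod_coupling} as $\sum_{(e_1,e_2)\in Z}\mu_1(e_1)\mu_2(e_2)=\sum_{e_1}\mu_1(e_1)\mu_2(\pi_2(Z_{e_1}))$ and using $\mu_2(Y)\le\varphi_2(Y)$ shows
\[
b(Z)=\sum_{e_1\in S_1}\mu_1(e_1)\bigl[\varphi_2(\pi_2(Z_{e_1}))-\mu_2(\pi_2(Z_{e_1}))\bigr]+\sum_{e_2\in S_2}\mu_2(e_2)\varphi_1(\pi_1(Z^{e_2}))\ \ge\ 0 ,
\]
and $b(\emptyset)=0$. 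The catch is that $b$ itself is generally \emph{not} increasing — already for $\varphi_1=\varphi_2$ the rank function of $U_{1,2}$ on a two-element ground set one can exhibit an inclusion along which $b$ strictly decreases — so $b$ is not yet a polymatroid coupling and must be monotonized.

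I would therefore take as the coupling the monotone closure $\hat b(Z):=\min\{\,b(W)\mid Z\subseteq W\subseteq S_1\times S_2\,\}$. It is trivially increasing; submodular by the standard argument (pick minimizers $W_1\supseteq Z_1$ and $W_2\supseteq Z_2$, note $W_1\cup W_2\supseteq Z_1\cup Z_2$ and $W_1\cap W_2\supseteq Z_1\cap Z_2$, and use submodularity of $b$); and $\hat b(\emptyset)=0$ with $\hat b\ge 0$ because $b\ge 0$. The essential observation is that $\hat b$ is still a coupling: for any $W\supseteq X_1\times S_2$, each row $e_1\in X_1$ satisfies $\pi_2(W_{e_1})=S_2$ and so contributes $\mu_1(e_1)\bigl[\varphi_2(S_2)-\mu_2(S_2)\bigr]=0$ to the first sum above, the remaining rows contribute nonnegatively, and the second sum is at least $\sum_{e_2}\mu_2(e_2)\varphi_1(X_1)=\varphi_1(X_1)\varphi_2(S_2)$; hence $b(W)\ge b(X_1\times S_2)=\varphi_1(X_1)\varphi_2(S_2)$, giving $\hat b(X_1\times S_2)=\varphi_1(X_1)\varphi_2(S_2)$, and symmetrically $\hat b(S_1\times X_2)=\varphi_1(S_1)\varphi_2(X_2)$. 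Integrality of $\hat b$ when $\varphi_1,\varphi_2$ are integer-valued follows from that of $b$.

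The last and only delicate step — the one where the inequalities $\mu_i(\{e\})\le k_i$ enter — is the subcardinality bound $\hat b(Z)\le k_1k_2|Z|$. Since $\hat b$ is increasing and $\hat b(\emptyset)=0$, it is enough to bound every marginal by $k_1k_2$. So I fix $Z$ and $(f_1,f_2)\notin Z$, take a minimizer $W\supseteq Z$ with $b(W)=\hat b(Z)$, and observe that the marginal is $\le 0$ if $(f_1,f_2)\in W$ and otherwise at most $b(W+(f_1,f_2))-b(W)$, which expanding~\eqref{eq:submod_coupling} equals
\[
\mu_1(f_1)m_2+\mu_2(f_2)m_1-\mu_1(f_1)\mu_2(f_2)=\mu_1(f_1)\bigl(m_2-\mu_2(f_2)\bigr)+\mu_2(f_2)m_1 ,
\]
where $m_1=\varphi_1(\pi_1(W^{f_2})+f_1)-\varphi_1(\pi_1(W^{f_2}))$ and $m_2=\varphi_2(\pi_2(W_{f_1})+f_2)-\varphi_2(\pi_2(W_{f_1}))$ obey $0\le m_1\le\varphi_1(\{f_1\})\le k_1$ and $0\le m_2\le\varphi_2(\{f_2\})\le k_2$ by submodularity and the $k_i$-polymatroid property. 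Using $\mu_1(f_1)\le k_1$, $\mu_2(f_2)\le k_2$ and splitting on the sign of $m_2-\mu_2(f_2)$: if $m_2\ge\mu_2(f_2)$ the expression is at most $k_1(k_2-\mu_2(f_2))+k_1\mu_2(f_2)=k_1k_2$; if $m_2<\mu_2(f_2)$ the first summand is negative and the expression is at most $\mu_2(f_2)m_1\le k_1k_2$. This caps all marginals of $\hat b$ at $k_1k_2$ and finishes the argument. I expect this two-line case analysis — together with the point that the submodular coupling must be monotonized without breaking the coupling identities — to be the only non-routine parts; the rest is bookkeeping on~\eqref{eq:submod_coupling}.
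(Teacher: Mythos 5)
Your proposal is correct, and it follows the paper's construction: the same $b$ from \eqref{eq:submod_coupling} with $\mu_1\in B(\varphi_1)$, $\mu_2\in B(\varphi_2)$ (integral vertices in the integer case, via \cref{prop:empty}), followed by the monotone closure $\hat b(Z)=\min\{b(W)\mid W\supseteq Z\}$. Where you diverge is in how the properties of the closure are verified. The paper's key step is \cref{cl:varphi_on_product}: the minimum defining $\hat b$ is attained at \emph{every} product set $Y_1\times Y_2$, so that $\hat b(Y_1\times Y_2)=b(Y_1\times Y_2)$; combined with \cref{cl:b} this simultaneously gives the coupling identities, $\hat b(\emptyset)=0$, and the singleton bound $\hat b(\{(e_1,e_2)\})\le\varphi_1(\{e_1\})\varphi_2(\{e_2\})\le k_1k_2$, from which the $(k_1k_2)$-polymatroid property follows by subadditivity. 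You instead prove the attainment only on cylinder sets $X_1\times S_2$ and $S_1\times X_2$ (your regrouping argument for this is valid, and it is a lighter computation than the paper's proof of \cref{cl:varphi_on_product}), and then handle the $(k_1k_2)$-bound by a separate, genuinely different argument: bounding every marginal of $\hat b$ by $k_1k_2$ through the increment $b(W+(f_1,f_2))-b(W)$ at a minimizer $W$, with the two-case analysis on $m_2-\mu_2(f_2)$; this case analysis is correct ($0\le m_i\le\varphi_i(\{f_i\})\le k_i$ and $\mu_i(f_i)\le k_i$ are exactly what is needed), and telescoping from $\hat b(\emptyset)=0$ finishes it. The trade-off is that the paper's stronger product-set claim also yields the ``locally tensor'' information $\hat b(Y_1\times Y_2)=\varphi_1(Y_1)\varphi_2(Y_2)$ whenever $\mu_1(Y_1)=\varphi_1(Y_1)$ or $\mu_2(Y_2)=\varphi_2(Y_2)$, which the paper reuses in \cref{cor:matroid}; your version proves \cref{thm:poly} itself completely but would not directly give that refinement.
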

\begin{proof}
Let $\mu_1 \in B(\varphi_1)$ and $\mu_2 \in B(\varphi_2)$ be arbitrary elements of the base polyhedrons of $\varphi_1$ and $\varphi_2$, respectively, which are integers if $\varphi_1$ and $\varphi_2$ are integers. Note that such elements exist by Proposition~\ref{prop:empty}. Let $b$ denote the submodular coupling of $\varphi_1$ and $\varphi_2$ defined as in \eqref{eq:submod_coupling} using $\mu_1$ and $\mu_2$.

\begin{cl} \label{cl:b} 
We have $b(\emptyset) = 0$ and $b(Y_1 \times Y_2) \le \varphi_1(Y_1)\cdot  \varphi_2(Y_2)$  for each $Y_1 \subseteq S_1$ and $Y_2 \subseteq S_2$, where equality holds if $\varphi_1(Y_1) =\mu_1(Y_1)$ or $\varphi_2(Y_2)=\mu_2(Y_2)$. 
\end{cl}
\begin{proof}
The equality $b(\emptyset) = 0$ follows from $\varphi_1(\emptyset)=\varphi_2(\emptyset) = 0$.
For each $Y_1 \subseteq S_1$ and $Y_2 \subseteq S_2$, using \eqref{eq:Y1Y2}, $\mu_1(Y_1) \le \varphi_1(Y_1)$ and $\mu_2(Y_2) \le \varphi_2(Y_2)$, we get
\[
b(Y_1 \times Y_2) =
\varphi_1(Y_1)\cdot\varphi_2(Y_2)- (\varphi_1(Y_1)-\mu_1(Y_1))\cdot (\varphi_2(Y_2)-\mu_2(Y_2)) \le 
\varphi_1(Y_1)\cdot  \varphi_2(Y_2). \]
Here equality holds if $\varphi_1(Y_1) =\mu_1(Y_1)$ or $\varphi_2(Y_2)=\mu_2(Y_2)$.
\end{proof}

The function $b$ thus obtained is still not necessarily increasing. To get an increasing set function, we define $\varphi$ on $S$ by setting \[\varphi(Z) \coloneqq \min\{b(Z') \mid Z' \supseteq Z\}.\]
The following technical claim is folklore, see e.g.~\cite{frank2011connections}. We include a proof to make the paper self-contained.

\begin{cl}\label{cl:inc}
$\varphi$ is an increasing submodular function over $S_1\times S_2$.
\end{cl}
\begin{proof}
Clearly, $\varphi$ is increasing by definition. To prove submodularity, let $Z_1,Z_2\subseteq S_1\times S_2$. Then, for $i=1,2$, there exists $Z'_i\supseteq Z_i$ such that $\varphi(Z_i)=b(Z'_i)$. Then, using the submodularity of $b$ and the definition of $\varphi$, we get 
\begin{align*}
\varphi(Z_1)+\varphi(Z_2)
&=
b(Z'_1)+b(Z'_2)\\
&\geq 
b(Z'_1\cap Z'_2)+b(Z'_1\cup Z'_2)\\
&\geq
\varphi(Z_1\cap Z_2)+\varphi(Z_1\cup Z_2),
\end{align*}
proving the claim.
\end{proof}

Finally, we show that $\varphi$ and $b$ coincide on product sets.

\begin{cl} \label{cl:varphi_on_product}
$\varphi(Y_1\times Y_2) = b(Y_1\times Y_2)$ holds for each $Y_1\subseteq S_1$ and $Y_2 \subseteq S_2$.
\end{cl}
\begin{proof}
We need to show that $b(Z) \ge b(Y_1\times Y_2)$ holds if $Y_1\times Y_2 \subseteq Z \subseteq S_1 \times S_2$. Using that $\varphi_2$ is increasing and $\mu_2(X) \le \varphi_2(X)$ holds for any $X\subseteq S_2$, we get
\begin{align}
\sum_{e_1\in S_1}\mu_1(e_1)\cdot \varphi_2(\pi_2(Z_{e_1})) & \ge \sum_{e_1 \in Y_1} \mu_1(e_1) \cdot \varphi_2(Y_2) + \sum_{e_1 \in S_1\setminus Y_1} \mu_1(e_1) \cdot \varphi_2(\pi_2(Z_{e_1})) \nonumber \\
& \ge \sum_{e_1 \in Y_1} \mu_1(e_1) \cdot \varphi_2(Y_2) + \sum_{e_1 \in S_1\setminus Y_1} \mu_1(e_1) \cdot \mu_2(\pi_2(Z_{e_1})) \nonumber \\
&= \mu_1(Y_1)\cdot  \varphi_2(Y_2) + \sum_{\substack{(e_1, e_2) \in Z \setminus (Y_1 \times S_2)}}  \mu_1(e_1) \cdot\mu_2(e_2). \label{eq:ineq1}
\end{align}
Similarly, using that $\varphi_1$ is increasing and $\mu_1(X) \le \varphi_1(X)$ holds for any $X\subseteq S_1$, we get
\begin{align}
\sum_{e_2\in S_2}\mu_2(e_2)\cdot \varphi_1(\pi_1(Z^{e_2})) & \ge \sum_{e_2 \in Y_2} \mu_2(e_2) \cdot \varphi_1(Y_1) + \sum_{e_2 \in S_2\setminus Y_2} \mu_2(e_2) \cdot \varphi_1(\pi_1(Z^{e_2})) \nonumber \\
& \ge \sum_{e_2 \in Y_2} \mu_2(e_2) \cdot \varphi_1(Y_1) + \sum_{e_2 \in S_2\setminus Y_2} \mu_2(e_2) \cdot \mu_1(\pi_1(Z^{e_2})) \nonumber \\
&= \mu_2(Y_2)\cdot  \varphi_1(Y_1) + \sum_{\substack{(e_1, e_2) \in Z \setminus (S_1 \times Y_2)}}  \mu_1(e_1) \cdot\mu_2(e_2). \label{eq:ineq2}
\end{align}
Using the definition \eqref{eq:submod_coupling} of $b$, the equality \eqref{eq:Y1Y2_first}, and the sum of the inequalities \eqref{eq:ineq1} and \eqref{eq:ineq2}, we get
\begin{align*}
b(Z) &\ge  \mu_1(Y_1)\cdot \varphi_2(Y_2) + \sum_{\substack{(e_1, e_2) \in Z \setminus (Y_1 \times S_2)}} \mu_1(e_1)\cdot \mu_2(e_2)  + \mu_2(Y_2)\cdot \varphi_1(Y_1) + \sum_{\substack{(e_1, e_2) \in Z \setminus (S_1 \times Y_2)}}  \mu_1(e_1)\cdot \mu_2(e_2)\\
&~~~~- \sum_{(e_1,e_2)\in Z} \mu_1(e_1) \cdot \mu_2(e_2) \\
& = \mu_1(Y_1)\cdot \varphi_2(Y_2) + \mu_2(Y_2)\cdot \varphi_1(Y_1) + \sum_{\substack{(e_1, e_2) \in Z\\ e_1 \not \in Y_1, e_2 \not \in Y_2}} \mu_1(e_1)\cdot \mu_2(e_2) - \sum_{(e_1, e_2) \in Y_1 \times Y_2} \mu_1(e_1) \cdot\mu_2(e_2) \\
& \ge \mu_1(Y_1)\cdot \varphi_2(Y_2) + \mu_2(Y_2)\cdot \varphi_1(Y_1) - \mu_1(Y_1)\cdot\mu_2(Y_2) \\
& = b(Y_1 \times Y_2).
\end{align*}
This concludes the proof of the claim.
\end{proof}

By Claim~\ref{cl:inc}, $\varphi$ is increasing submodular. Claim~\ref{cl:varphi_on_product} implies that $\varphi(\emptyset) = b(\emptyset) = 0$, thus $\varphi \ge 0$. Using $\varphi_1(S_1)=\mu_1(S_1)$, $\varphi_2(S_2)=\mu_2(S_2)$, and Claims~\ref{cl:b} and \ref{cl:varphi_on_product}, we get that $\varphi(Y_1 \times S_2) = b(Y_1\times S_2) = \varphi_1(Y_1) \cdot \varphi_2(S_2)$ and $\varphi(S_1 \times Y_2) = b(S_1 \times Y_2) = \varphi_1(S_1) \times \varphi_2(Y_2)$ holds for $Y_1 \subseteq S_1$ and $Y_2 \subseteq S_2$, that is, $\varphi$ is a coupling of $\varphi_1$ and $\varphi_2$.
For $(e_1, e_2)\in S_1\times S_2$, by Claims~\ref{cl:b} and \ref{cl:varphi_on_product}, we have
\[\varphi(\{(e_1, e_2)\}) = b(\{(e_1, e_2)\}) \le \varphi_1(\{e_1\})\cdot \varphi_2(\{e_2\}) \le k_1 \cdot k_2,\]
that is, $\varphi$ is a $(k_1\cdot k_2)$-polymatroid function. Finally, if $\varphi_1$ and $\varphi_2$ are integer-valued, then $\varphi$ is integer-valued as well by construction. This finishes the proof of the theorem.
\end{proof}

\subsection{Matroids and tensor products}
\label{sec:matroids}

One of the main motivations for our work was to extend the notion of coupling to matroids. For ease of discussion, we say that $M=(S_1\times S_2,r)$ is a coupling of matroids $M_1=(S_1,r_1)$ and $M_2=(S_2,r_2)$ if $r$ is a coupling of $r_1$ and $r_2$. In Section~\ref{sec:local}, we show that matroids always admit a coupling that nearly satisfies the requirements of being a tensor product. Our construction, as it turns out, has deep connections to amalgams of matroids. Then, in Section~\ref{sec:tensor}, we characterize couplings that are tensor products as well. Based on our observations, in Section~\ref{sec:ingleton}, we highlight a surprising phenomenon: the linearity of a matroid is closely related to whether it admits a tensor product with the uniform matroid $U_{2,3}$. This result could serve as a useful tool in studying linear matroids.

\subsubsection{Matroid couplings and amalgams}
\label{sec:local}

Since the rank function of any matroid is an integer-valued $1$-polymatroid function, Theorem~\ref{thm:poly} can be applied. Actually, the proof implies an even stronger result.

\begin{cor} \label{cor:matroid}
Let $M_1=(S_1, r_1)$ and $M_2=(S_2, r_2)$ be matroids, let $B_1$ be a basis of $M_1$ and $B_2$ be a basis of $M_2$, and set $S\coloneqq S_1\times S_2$. Then, $M_1$ and $M_2$ have a coupling $M=(S,r)$ such that $r(Y_1\times Y_2)=r_1(Y_1)\cdot r_2(Y_2)$ whenever $Y_1\subseteq B_1$ or $Y_2\subseteq B_2$. 
\end{cor}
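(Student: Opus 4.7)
The plan is to invoke Theorem~\ref{thm:poly} with $\varphi_1 = r_1$ and $\varphi_2 = r_2$ (which are integer-valued $1$-polymatroid functions) and then to specialize the modular functions $\mu_1, \mu_2$ appearing inside its construction so that the sharper equality falls out. Theorem~\ref{thm:poly} already produces an integer-valued $(1\cdot 1)$-polymatroid coupling $r$; since $r(\{(e_1,e_2)\})\le 1$ for every pair and $r(\emptyset) = 0$ together with submodularity forces subadditivity, we obtain $r(Z)\le |Z|$ for every $Z\subseteq S$. Combined with the other rank axioms this shows $r$ is a matroid rank function, establishing the existence of the matroidal coupling $M = (S, r)$.

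To get the refined identity $r(Y_1\times Y_2) = r_1(Y_1)\cdot r_2(Y_2)$ whenever $Y_1\subseteq B_1$ or $Y_2\subseteq B_2$, I would take the modular functions in the construction of Theorem~\ref{thm:poly} to be the indicator vectors of the bases: $\mu_i(e) = 1$ if $e\in B_i$ and $\mu_i(e) = 0$ otherwise, for $i = 1, 2$. Membership $\mu_i\in B(r_i)$ is immediate, since $\mu_i(S_i) = |B_i| = r_i(S_i)$ and, for any $Z\subseteq S_i$, $\mu_i(Z) = |Z\cap B_i| = r_i(Z\cap B_i)\le r_i(Z)$ because $Z\cap B_i$ is independent. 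The crucial feature of this choice is that $\mu_i$ coincides with $r_i$ on all subsets of $B_i$: for $Y_i\subseteq B_i$ we have $\mu_i(Y_i) = |Y_i| = r_i(Y_i)$.

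With these choices, for any $Y_1\subseteq B_1$ and $Y_2\subseteq S_2$ the hypothesis $\mu_1(Y_1) = r_1(Y_1)$ of the equality case of Claim~\ref{cl:b} is met, giving $b(Y_1\times Y_2) = r_1(Y_1)\cdot r_2(Y_2)$; the symmetric assertion for $Y_2\subseteq B_2$ follows analogously. Claim~\ref{cl:varphi_on_product} then transfers this equality from $b$ to the coupling $r = \varphi$, completing the argument. The only substantive point — and the main idea — is the recognition that the freedom in choosing $\mu_1, \mu_2$ inside Theorem~\ref{thm:poly} can be exploited to make Claim~\ref{cl:b} tight on exactly the product sets on which we want the coupling to behave ``tensor-like'', and the basis indicator vectors are the natural candidates that achieve this. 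Everything else is either direct from Theorem~\ref{thm:poly} or from the internal claims of its proof, so I do not anticipate a real obstacle.
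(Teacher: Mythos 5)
Your proposal is correct and follows essentially the same route as the paper: choose $\mu_i$ to be the indicator vectors $\chi_{B_i}\in B(r_i)$, invoke Theorem~\ref{thm:poly}, and use the equality case of Claim~\ref{cl:b} together with Claim~\ref{cl:varphi_on_product} to obtain $r(Y_1\times Y_2)=r_1(Y_1)\cdot r_2(Y_2)$ when $Y_1\subseteq B_1$ or $Y_2\subseteq B_2$. The only extra content in your writeup is the explicit verification that the integer-valued $1$-polymatroid coupling is in fact a matroid rank function, which the paper leaves implicit; this is fine and makes the corollary more self-contained.
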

\begin{proof}
Let $\mu_1\coloneqq \chi_{B_1}$ and $\mu_2\coloneqq\chi_{B_2}$ be the characteristic vectors of $B_1$ and $B_2$, respectively. Clearly, $\chi_i\in B(r_i)$ holds for $i=1,2$. By Theorem~\ref{thm:poly}, $M_1$ and $M_2$ admit a coupling $M=(S,r)$, where 
\begin{equation*}
    r(Z)=\min_{W\supseteq Z}\left\{\sum_{e_1\in B_1}r_2(\pi_2(W_{e_1}))+\sum_{e_2\in B_2}r_1(\pi_1(W^{e_2})) - |W\cap(B_1\times B_2)|\right\}.
\end{equation*}
Let $Y_1\subseteq S_1$ and $Y_2\subseteq S_2$. By Claim~\ref{cl:varphi_on_product}, the minimum is attained on $W=Z$ for $Z=Y_1\times Y_2$. Furthermore, if $Y_1\subseteq B_1$ or $Y_2\subseteq  B_2$, then $\varphi_1(Y_1)=\mu_1(Y_1)$ or $\varphi_2(Y_2)=\mu_2(Y_2)$, and thus $r(Y_1\times Y_2)=r_1(Y_1)\cdot r_2(Y_2)$ by Claim~\ref{cl:b}.
This completes the proof of the corollary.
\end{proof}

Corollary~\ref{cor:matroid} has a somewhat surprising implication: for a fixed rank $r$ and ground set size $n$, there exists a function defined on a finite ground set that contains every rank-$r$ matroid in some sense. For $r,n\in\bZ_+$, let $N_{r,n}$ denote the number of rank-$r$ matroids on $n$ elements up to isomorphism.

\begin{cor}\label{cor:matroid2}
There exists a matroid $M$ over $[n]^{N_{r,n}}$ of rank $r^{N_{r,n}}$ such that any rank-$r$ matroid on $[n]$ is a projection of $M$ to one of its coordinates.
\end{cor}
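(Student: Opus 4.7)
The plan is to iterate Corollary~\ref{cor:matroid}. Let $N \coloneqq N_{r,n}$ and fix a list of representatives $M_1, \ldots, M_N$, one from each isomorphism class of rank-$r$ matroids on $n$ elements, with every $M_i$ having ground set identified with $[n]$ and rank function $r_i$.

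I would define inductively a sequence of matroids $\widetilde{M}_k = ([n]^k, \widetilde{r}_k)$ by setting $\widetilde{M}_1 \coloneqq M_1$ and, given $\widetilde{M}_k$, letting $\widetilde{M}_{k+1}$ be a coupling of $\widetilde{M}_k$ and $M_{k+1}$ as guaranteed by Corollary~\ref{cor:matroid}, with the ground set $[n]^k \times [n]$ identified in the natural way with $[n]^{k+1}$. The defining coupling identities $\widetilde{r}_{k+1}([n]^k \times [n]) = \widetilde{r}_k([n]^k) \cdot r_{k+1}([n])$ together with $\widetilde{r}_1([n]) = r$ give by induction $\widetilde{r}_k([n]^k) = r^k$. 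Setting $M \coloneqq \widetilde{M}_N$ then produces a matroid on $[n]^N$ of rank $r^N$, matching the dimensions in the statement.

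For the projection property, I would show that for each $i \in [N]$ and every $Y \subseteq [n]$,
\[
\widetilde{r}_N\bigl(\pi_i^{-1}(Y)\bigr) = r^{N-1} \cdot r_i(Y),
\]
where $\pi_i \colon [n]^N \to [n]$ denotes the $i$-th coordinate projection. This follows by a short induction on $N - i$, alternating between the coupling identities $\widetilde{r}_{k+1}(Z \times [n]) = r \cdot \widetilde{r}_k(Z)$ for $Z \subseteq [n]^k$ and $\widetilde{r}_{k+1}([n]^k \times Y') = r^k \cdot r_{k+1}(Y')$ for $Y' \subseteq [n]$. Consequently, the quotient of $\widetilde{r}_N$ by the partition of $[n]^N$ into the fibers of $\pi_i$ equals $r^{N-1} \cdot r_i$, and after dividing by the constant $r^{N-1}$ one recovers the rank function of $M_i$; hence every rank-$r$ matroid on $[n]$ appears, up to this natural normalization, as a projection of $M$ onto one of its coordinates.

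The main step is simply invoking Corollary~\ref{cor:matroid} repeatedly, so I do not foresee any substantive obstacle: all the heavy lifting is done by the two-factor coupling construction, and the only work left is the bookkeeping of the iterated coupling identities along the two preferred ``slab'' directions $Z \times [n]$ and $[n]^k \times Y'$.
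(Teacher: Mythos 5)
Your proof is correct and follows the same route the paper takes: the paper's proof is simply the one-liner "apply the coupling of matroid pairs iteratively for every rank-$r$ matroid over a ground set of size $n$," which is exactly your inductive construction. The only substance you add beyond the paper's terse proof is the careful verification that iterating the two coupling identities gives $\widetilde{r}_N(\pi_i^{-1}(Y)) = r^{N-1}\cdot r_i(Y)$, and your observation that one must normalize by $r^{N-1}$ to recover $r_i$ exactly is a fair reading of the (slightly informal) word "projection" in the corollary's statement.
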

\begin{proof}
The results follows by applying the coupling of matroid pairs iteratively for every rank-$r$ matroid over a ground set of size $n$.    
\end{proof}

Recall that two matroids do not necessarily admit a tensor product. However, the main message of Corollary~\ref{cor:matroid} is that there always exists a coupling that locally satisfies the properties of the tensor product in the sense that $r(Y_1 \times Y_2) = r_1(Y_1) \cdot r_2(Y_2)$ holds, not universally for all $Y_1, Y_2$, but specifically when at least one of $Y_1 \subseteq B_1$ or $Y_2 \subseteq B_2$ is satisfied. In particular, this implies that $M|(S_1 \times \{y\})$ is isomorphic to $M_1$ for each $y \in B_2$ by the natural bijection to $S_1$, and $M|(\{x\} \times S_2)$ is isomorphic to $M_2$ for each $x \in B_1$ by the natural bijection to $S_2$.

\begin{rem}
In the previous discussion, our strategy was to verify the existence of a coupling first for polymatroid functions and then for matroids. However, one could also take a reverse approach. The construction in Theorem~\ref{thm:poly} can be formalized directly for matroids, as demonstrated in the proof of Corollary~\ref{cor:matroid}. Since every integer-valued polymatroid function is the quotient of a matroid rank function by Proposition~\ref{prop:helgason}, this immediately implies the existence of a coupling for integer-valued polymatroid functions. Extending this, one can readily establish the statement for rational-valued polymatroid functions. Finally, by approximating a real-valued polymatroid function with rational-valued ones, the existence of a coupling for general polymatroid functions follows.    
\end{rem}

Let $N_0 = (T_1\cap T_2, r_0')$, $N_1 = (T_1, r'_1)$ and $N_2=(T_2, r'_2)$ be matroids such that $N_1|(T_1 \cap T_2) = N_2|(T_1 \cap T_2) = N_0$, and let $T\coloneqq T_1 \cup T_2$. A matroid $N=(T,r)$ is called an {\it amalgam} of $N_1$ and $N_2$ if $N|T_1 = N_1$ and $N|T_2 = N_2$. An amalgam $N$ is called {\it free} if for any amalgam $N'$, each independent set of $N'$ is independent in $N$. The following result appears, for example, in~\cite[Proposition~11.4.2]{oxley2011matroid}.

\begin{prop}\label{prop:amalgam}
Let $N_0 = (T_1 \cap T_2, r')$, $N_1 = (T_1, r'_1)$, and $N_2=(T_2, r'_2)$ be matroids such that $N_1|(T_1 \cap T_2) = N_2|(T_1 \cap T_2) = N_0$, and let $T\coloneqq T_1 \cup T_2$. Define $r\colon 2^T\to\bZ_+$ by
\[r(Z) \coloneqq \min \{r'_1(Y\cap T_1) + r'_2(Y \cap T_2) - r'(Y \cap T_1 \cap T_2) \mid Y \supseteq Z\}.
\]
If $N_0$ is a modular matroid, then $r$ is submodular. Moreover, if $r$ is submodular, then $N = (T, r)$ is a free amalgam of $N_1$ and $N_2$.
\end{prop}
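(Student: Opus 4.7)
The plan is to prove the two assertions in turn, using the auxiliary function $\eta(Y) \coloneqq r'_1(Y \cap T_1) + r'_2(Y \cap T_2) - r'(Y \cap T_1 \cap T_2)$, so that $r(Z) = \min\{\eta(Y) \mid Y \supseteq Z\}$ by definition.

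For submodularity of $r$ when $N_0$ is modular, the natural strategy is to mimic the argument of Claim~\ref{cl:inc}: pick minimizers $Y_i \supseteq Z_i$ with $r(Z_i) = \eta(Y_i)$, and observe that
\[
r(Z_1) + r(Z_2) = \eta(Y_1) + \eta(Y_2) \geq \eta(Y_1 \cap Y_2) + \eta(Y_1 \cup Y_2) \geq r(Z_1 \cap Z_2) + r(Z_1 \cup Z_2),
\]
where the second inequality is free since $Y_1 \cap Y_2 \supseteq Z_1 \cap Z_2$ and $Y_1 \cup Y_2 \supseteq Z_1 \cup Z_2$. The crux is the first inequality, i.e.\ submodularity of $\eta$ on the specific pair $(Y_1, Y_2)$. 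This is the main obstacle, since $\eta$ has the form \emph{(submodular)} $+$ \emph{(submodular)} $-$ \emph{(submodular)} and is not submodular in general. The idea is to arrange that $Y_i \cap T_1 \cap T_2$ is a flat of $N_0$: modularity of $N_0$ then forces the $r'$-terms on $Y_i \cap T_1 \cap T_2$ to telescope exactly, and the inequality reduces to submodularity of $r'_1$ and $r'_2$. To obtain such minimizers, given any $Y$ with $\eta(Y) = r(Z)$ I would replace it with $Y \cup \operatorname{cl}_{N_0}(Y \cap T_1 \cap T_2)$; using that closure in $N_0$ agrees with closure in $N_i$ intersected with $T_1 \cap T_2$ (since $N_i|(T_1 \cap T_2) = N_0$), a short submodularity calculation shows that this replacement preserves each of the three $r'$-terms in $\eta$, while by construction it makes the intersection with $T_1 \cap T_2$ a flat.

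For the second assertion, assuming only that $r$ is submodular, I would first verify that $(T, r)$ is a matroid. Nonnegativity and $r(\emptyset) = 0$ follow from $\eta \geq 0$ and $\eta(\emptyset) = 0$, where nonnegativity uses $r'_1(Y \cap T_1) \geq r'_1(Y \cap T_1 \cap T_2) = r'(Y \cap T_1 \cap T_2)$ by monotonicity and $N_1|(T_1 \cap T_2) = N_0$. Monotonicity of $r$ is immediate from its min-definition; integrality is inherited from $\eta$; and subcardinality reduces via submodularity to the easy check $r(\{e\}) \leq 1$, obtained by taking $Y = \{e\}$. For the identity $r|2^{T_1} = r'_1$ (and symmetrically for $T_2$), choosing $Y = Z$ when $Z \subseteq T_1$ yields $\eta(Z) = r'_1(Z)$ since $r'_2$ and $r'$ agree on subsets of $T_1 \cap T_2$; the reverse inequality comes from $\eta(Y) \geq r'_1(Y \cap T_1) \geq r'_1(Z)$ for any $Y \supseteq Z$, using monotonicity of $r'_2$ and $N_2|(T_1 \cap T_2) = N_0$.

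Freeness then follows from a direct calculation: for any amalgam $(T, r'')$ and any $Y \supseteq Z$, submodularity of $r''$ together with $r''|T_i = r'_i$ gives
\[
r''(Z) \leq r''(Y) \leq r''(Y \cap T_1) + r''(Y \cap T_2) - r''(Y \cap T_1 \cap T_2) = \eta(Y),
\]
and minimizing over $Y$ yields $r'' \leq r$, so $N$ is the free amalgam.
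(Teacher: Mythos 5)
The paper does not supply a proof of this proposition; it is cited to Oxley's book (Proposition~11.4.2), so there is no in-paper argument to compare against. Your proof is correct and follows the standard textbook line. The two essential ingredients are exactly the ones you isolate: (a) a minimizer $Y$ of $\eta$ over supersets of $Z$ can be enlarged by $\operatorname{cl}_{N_0}(Y\cap T_1\cap T_2)$ without changing any of the three rank terms (because $\operatorname{cl}_{N_0}(X)\subseteq\operatorname{cl}_{N_i}(X)$ for $X\subseteq T_1\cap T_2$ and closure preserves rank), after which $Y\cap T_1\cap T_2$ is a flat of $N_0$; and (b) once both minimizers have this form, the submodularity defect of $\eta$ on the pair $(Y_1,Y_2)$ splits into two nonnegative submodularity defects of $r'_1$, $r'_2$ minus the defect of $r'$ on the pair of flats $Y_1\cap T_1\cap T_2$, $Y_2\cap T_1\cap T_2$, which vanishes by modularity of $N_0$. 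The verifications that $N$ is a matroid (nonnegativity via $\eta(Y)\ge r'_1(Y\cap T_1)\ge 0$, subcardinality reduced to singletons via submodularity, the restrictions via $\eta(Y)\ge r'_i(Y\cap T_i)\ge r'_i(Z)$ for $Z\subseteq T_i$) and that $r''\le r$ for every amalgam (via submodularity of $r''$ applied to $Y\cap T_1$, $Y\cap T_2$) are all correct and complete; in particular, $r''\le r$ together with subcardinality of $r$ does imply that $N''$-independent sets are $N$-independent.
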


If the function $r$ defined in Proposition~\ref{prop:amalgam} is submodular, then $N=(T, r)$ is called the {\it proper amalgam} of $N_1$ and $N_2$. Both free and proper amalgams are unique, if they exist. It turns out, though, that for a given pair of matroids, there may be no free amalgam, or the free amalgam may not be proper, or no amalgams may exist at all. 

Given matroids $M_1=(S_1, r_1)$ and $M_2=(S_2, r_2)$, a basis $B_1$ of $M_1$, and a basis $B_2$ of $M_2$, consider the matroids $N_1=\bigoplus_{e_2 \in B_2} M_1$ on ground set $S_1 \times B_2$ and $N_2 = \bigoplus_{e_1 \in B_1} M_2$ on ground set $B_1 \times S_2$. Observe that $(S_1 \times B_2) \cap (B_1 \times S_2) = B_1 \times B_2$ and $N_1|(B_1 \times B_2) = N_2|(B_1 \times B_2)$ is the free matroid on $B_1 \times B_2$. Therefore, Proposition~\ref{prop:amalgam} implies that the proper amalgam of $N_1$ and $N_2$ exists. Our construction for the coupling of $M_1$ and $M_2$ shown in the proof of \cref{cor:matroid} can be obtained from the proper amalgam of $N_1$ and $N_2$ by adding the elements of $(S_1 \setminus B_1) \times (S_2 \setminus B_2)$ as loops. We prove that the same construction gives a coupling for any amalgam of $N_1$ and $N_2$.

\begin{thm} \label{thm:coupalgam}
Let $M_1=(S_1, r_1)$ and $M_2=(S_2, r_2)$ be matroids, and let $B_1$ be a basis of $M_1$ and $B_2$ be a basis of $M_2$. Consider the matroids $N_1=\bigoplus_{e_2 \in B_2} M_1$ on ground set $S_1 \times B_2$ and $N_2 = \bigoplus_{e_1 \in B_1} M_2$ on ground set $B_1 \times S_2$, and let $S\coloneqq S_1 \times S_2$. If $M=(S, r)$ is a matroid such that each each element of $(S_1 \setminus B_1) \times (S_2 \setminus B_2)$ is a loop in $M$ and $M|((S_1 \times B_2) \cup (B_1 \times S_2))$ is an amalgam of $N_1$ and $N_2$, then $M$ is a coupling of $M_1$ and $M_2$.  
\end{thm}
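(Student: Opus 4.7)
The plan is to verify the two defining identities of a coupling, namely $r(X_1\times S_2) = r_1(X_1)\cdot r_2(S_2)$ for every $X_1\subseteq S_1$ and $r(S_1\times X_2) = r_1(S_1)\cdot r_2(X_2)$ for every $X_2\subseteq S_2$, directly from the amalgam structure. By symmetry, it suffices to establish the first identity. Since every element of $(S_1\setminus B_1)\times (S_2\setminus B_2)$ is a loop of $M$, such elements can be discarded when computing rank; hence for any $Z\subseteq S$ we have $r(Z)=r(Z\cap T)$, where $T\coloneqq (S_1\times B_2)\cup(B_1\times S_2)$ is the ground set of the amalgam. In particular,
\[(X_1\times S_2)\cap T = (X_1\times B_2)\cup((X_1\cap B_1)\times S_2).\]

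The first step is to evaluate $r(X_1\times B_2)$. Since $M|(S_1\times B_2) = N_1 = \bigoplus_{e_2\in B_2} M_1$, the rank of $X_1\times B_2$ in $M$ equals its rank in $N_1$, namely $|B_2|\cdot r_1(X_1) = r_2(S_2)\cdot r_1(X_1)$. So the desired equality reduces to showing that $X_1\times B_2$ spans $X_1\times S_2$ in $M$. The elements $(X_1\setminus B_1)\times(S_2\setminus B_2)$ are loops, so the only nontrivial claim is that $(X_1\cap B_1)\times B_2$ spans $(X_1\cap B_1)\times S_2$ in $M$. For this, restrict attention to $M|(B_1\times S_2) = N_2 = \bigoplus_{e_1\in B_1} M_2$: since $B_2$ is a basis of $M_2$, the set $\{e_1\}\times B_2$ spans $\{e_1\}\times S_2$ in $N_2$ for every $e_1\in B_1$, and therefore $(X_1\cap B_1)\times B_2$ spans $(X_1\cap B_1)\times S_2$ in $N_2$. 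Because restrictions preserve spanning inside the restricted set, the same holds in $M$. Combined with the loop remark, $X_1\times B_2$ spans $X_1\times S_2$ in $M$, giving $r(X_1\times S_2)=r(X_1\times B_2)=r_1(X_1)\cdot r_2(S_2)$.

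Carrying out the symmetric argument with the roles of the two factors swapped (using $M|(B_1\times S_2) = N_2$ to evaluate $r(B_1\times X_2)$, and using $M|(S_1\times B_2) = N_1$ and the fact that $B_1$ is a basis of $M_1$ to show that $B_1\times X_2$ spans $S_1\times X_2$ in $M$) yields $r(S_1\times X_2) = r_1(S_1)\cdot r_2(X_2)$, which completes the proof.

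I do not expect a genuine obstacle here: the whole argument is just the observation that the amalgam property $M|(S_1\times B_2)=N_1$, $M|(B_1\times S_2)=N_2$ already forces the right rank on $X_1\times B_2$, while the rest of $X_1\times S_2$ is either loops or is spanned through the other half of the amalgam via the basis $B_2$ of $M_2$. The only point that must be handled with a little care is the transfer of spanning relations between $M|(B_1\times S_2)$ and $M$, which is routine since a restriction agrees with the ambient matroid on spanning inside the restricted ground set.
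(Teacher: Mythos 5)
Your proof is correct and follows essentially the same approach as the paper's: both evaluate $r(X_1 \times B_2)$ via $N_1$ and then argue that $X_1 \times B_2$ spans $X_1 \times S_2$ in $M$ by combining the spanning property of $B_2$ in $M_2$ (applied inside $N_2 = M|(B_1 \times S_2)$) with the loop observation for $(S_1\setminus B_1)\times(S_2\setminus B_2)$. The only cosmetic difference is that the paper exhibits an explicit independent spanning set $I_1 \times B_2$ (with $I_1$ a maximal independent subset of $Y_1$ extending $B_1 \cap Y_1$) to give matching lower and upper bounds, whereas you compute $r(X_1\times B_2)$ directly from the direct-sum structure of $N_1$; the underlying argument is identical.
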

\begin{proof}
We show that $r(Y_1 \times S_2) = r_1(Y_1)\cdot r_2(S_2)$ holds for each $Y_1 \subseteq S_1$; a similar reasoning proves that $r(S_1 \times Y_2) = r_1(S_1)\cdot r_2(Y_2)$ holds for each $Y_2 \subseteq S_2$. Since $B_1\cap Y_1$ is independent in $M_1$, it can be extended to a maximum independent set $I_1$ of $Y_1$ with $|I_1| = r_1(Y_1)$. Since $I_1 \times B_2$ is independent in $N_1$, it is independent in $N$, thus $r(Y_1 \times S_2) \ge |I_1 \times B_2| = r_1(Y_1)\cdot r_2(S_2)$. To see the reverse inequality, observe that $I_1 \times B_2$ spans $Y_1 \times B_2$ in $N_1$,  $(B_1 \cap Y_1) \times B_2 \subseteq I_1 \times B_2$ spans $(B_1 \cap Y_1) \times S_2$ in $N_2$, and each element of $(Y_1 \setminus B_1) \times (S_2 \setminus B_2)$ is a loop of $N$. Thus $I_1 \times B_2$ spans $Y_1 \times S_2$ in $N$, implying $r(Y_1 \times S_2) \le |I_1 \times B_2| = r_1(Y_1)\cdot r_2(S_2)$.
\end{proof}

\subsubsection{Tensor products}
\label{sec:tensor}

From the definitions alone, it is clear that coupling and the tensor product are closely related concepts. The following theorem characterizes which couplings satisfy the stronger properties of the tensor product.

\begin{thm} \label{thm:tensor}
Let $M_1=(S_1, r_1)$, $M_2=(S_2, r_2)$ and $M=(S, r)$ be matroids such that $S=S_1 \times S_2$. Then, the following are equivalent:
\begin{enumerate}[label=(\roman*)]\itemsep0em
\item $r(Y_1\times Y_2) = r_1(Y_1)\cdot r_2(Y_2)$ for each $Y_1 \subseteq S_1$ and $Y_2 \subseteq S_2$, \label{it:tensor}
\item $r(\{e_1\} \times Y_2)) = r_1(e_1)\cdot r_2(Y_2)$ for each $Y_2\subseteq S_2$ and $e_1 \in S_1$, $r(Y_1\times \{e_2\})) = r_1(Y_1)\cdot r_2(e_2)$ for each $Y_1\subseteq S_1$ and $e_2 \in S_2$, and $r(S) = r_1(S_1)\cdot r_2(S_2)$,\label{it:fibers}
\item $M$ is a coupling of $M_1$ and $M_2$ and $r((e_1, e_2)) = r_1(e_1)\cdot r_2(e_2)$ for each $(e_1, e_2) \in S$. \label{it:coupling}
\end{enumerate}
\end{thm}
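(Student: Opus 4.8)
The plan is to prove the cyclic chain of implications \ref{it:tensor} $\Rightarrow$ \ref{it:fibers} $\Rightarrow$ \ref{it:coupling} $\Rightarrow$ \ref{it:tensor}. The first two implications are trivial: \ref{it:tensor} $\Rightarrow$ \ref{it:fibers} is just specialization (take $Y_1 = \{e_1\}$, resp.\ $Y_2 = \{e_2\}$, resp.\ $Y_1 = S_1, Y_2 = S_2$), and \ref{it:fibers} $\Rightarrow$ \ref{it:coupling} follows because setting $e_1$ to range over a basis of $M_1$ and taking $Y_2 = S_2$ gives $r(\{e_1\}\times S_2) = r_1(e_1)\cdot r_2(S_2)$, and more generally summing the fiber conditions; actually the cleanest route is to note that \ref{it:fibers} already contains $r((e_1,e_2)) = r_1(e_1)\cdot r_2(e_2)$ (take $Y_2 = \{e_2\}$ in the first equation), so it only remains to check $M$ is a coupling, i.e.\ $r(Y_1\times S_2) = r_1(Y_1)\cdot r_2(S_2)$ and symmetrically. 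For this, one shows $r(Y_1 \times S_2) = r(Y_1 \times B_2)$ for a basis $B_2$ of $M_2$ (each element of $\{x\}\times(S_2\setminus B_2)$ is spanned by $\{x\}\times B_2$ using the fiber condition on $\{e_1\}\times S_2$), and then that $Y_1 \times B_2$ has rank $r_1(Y_1)\cdot|B_2| = r_1(Y_1)\cdot r_2(S_2)$ — the upper bound from submodularity and the fiber conditions, the lower bound by exhibiting an independent set of that size coming from a basis of $M_1|Y_1$ crossed with $B_2$ (using that each row-fiber $Y_1\times\{e_2\}$ realizes $M_1$).

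The substantive implication is \ref{it:coupling} $\Rightarrow$ \ref{it:tensor}: from the coupling property plus the single-element condition $r((e_1,e_2)) = r_1(e_1)\cdot r_2(e_2)$ we must recover $r(Y_1\times Y_2) = r_1(Y_1)\cdot r_2(Y_2)$ for \emph{all} $Y_1, Y_2$. The upper bound $r(Y_1\times Y_2)\le r_1(Y_1)\cdot r_2(Y_2)$ should follow by a submodularity / covering argument: cover $Y_1\times Y_2$ by the fibers, or more carefully, use that $Y_1\times S_2$ has rank $r_1(Y_1)\cdot r_2(S_2)$ and contract. The real work is the lower bound $r(Y_1\times Y_2)\ge r_1(Y_1)\cdot r_2(Y_2)$. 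Here the natural idea is: pick a basis $B_1^{Y_1}$ of $M_1|Y_1$ (size $r_1(Y_1)$) and a basis $B_2^{Y_2}$ of $M_2|Y_2$ (size $r_2(Y_2)$), and argue that $B_1^{Y_1}\times B_2^{Y_2}$ is independent in $M$. To do this I would work by induction: first show, using the coupling property, that $r(Y_1\times S_2) = r_1(Y_1)\cdot r_2(S_2)$ and that $Y_1\times S_2$ restricted to $M$ behaves like a direct-sum-of-copies situation along the $Y_1$-coordinate; then contract a carefully chosen independent set and use the single-element rank condition to show no "collapse" happens when we pass from $S_2$ down to $Y_2$.

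The main obstacle I anticipate is precisely controlling the lower bound in \ref{it:coupling} $\Rightarrow$ \ref{it:tensor} — ruling out that the product of two bases becomes dependent in $M$ even though every fiber and every singleton has the "correct" rank. The coupling conditions only pin down ranks of sets of the form $Y_1\times S_2$ and $S_1\times Y_2$; a priori a set like $B_1^{Y_1}\times B_2^{Y_2}$ sitting in the "interior" could have deficient rank. The way I would resolve this is to leverage that $M|(Y_1\times S_2)$, having rank exactly $r_1(Y_1)\cdot r_2(S_2) = r_1(Y_1)\cdot|B|$ for a basis $B$ of $M_2$, and containing the $|B|$ disjoint sets $Y_1\times\{e\}$ ($e\in B$) each of rank $\le r_1(Y_1)$ whose ranks sum to the total, must be the direct sum $\bigoplus_{e\in B} (M|(Y_1\times\{e\}))$ — a general fact that equality in submodularity across a partition forces a direct-sum decomposition along that partition. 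Combined with the symmetric statement in the other coordinate and an inductive peeling of one element of $B_2^{Y_2}$ at a time (tracking that contracting $Y_1\times\{e_2\}$ leaves the remaining structure intact, using the singleton condition to prevent rank drops), this should close the argument. I would flag this direct-sum-from-tight-submodularity lemma as the technical heart and prove it as a preliminary claim.
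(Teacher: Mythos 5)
Your overall scaffolding (the cyclic chain, targeting independence of a product of bases, and the general fact that $r(\bigcup_i X_i)=\sum_i r(X_i)$ for disjoint $X_i$ forces $M|\bigcup_i X_i=\bigoplus_i M|X_i$) is sound, but the load-bearing steps have genuine gaps, and the key lemma is applied along the wrong coordinate. Under (iii) the only ranks you control are those of $Y_1\times S_2$, $S_1\times Y_2$ and singletons. Yet you invoke the tight-submodularity lemma for the family $\{Y_1\times\{e\}\}_{e\in B}$: this family does not even partition $Y_1\times S_2$ (only $Y_1\times B$), and the hypotheses you cite for it --- $r(Y_1\times\{e\})\le r_1(Y_1)$ and the row ranks summing to the total --- are not consequences of (iii); the inequality $r(Y_1\times\{e\})\le r_1(Y_1)$ is essentially part of what must be proved. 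The tightness the coupling actually provides is along the other coordinate: for $I_1$ independent in $M_1$ one has $r(I_1\times S_2)=r_1(I_1)\cdot r_2(S_2)=\sum_{e_1\in I_1}r(\{e_1\}\times S_2)$, so $M|(I_1\times S_2)$ decomposes into its column fibers (this is the paper's Claim on direct sums), and symmetrically for $S_1\times I_2$; then every circuit inside $I_1\times I_2$ lies in one column and one row, hence is a loop, which the singleton condition excludes. That is how the lower bound is actually closed, and it is not what your sketch does.

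The upper bound in (iii)$\Rightarrow$(i) is also not the routine step you take it to be: covering $Y_1\times Y_2$ by fibers needs $r(Y_1\times\{e_2\})\le r_1(Y_1)$ or $r(\{e_1\}\times Y_2)\le r_2(Y_2)$, which (iii) does not yet give, and contraction only lowers ranks, so it cannot yield an upper bound. What is needed first is exactly (ii): the paper obtains it by a squeeze --- the lower bound $r(\{b_1\}\times Y_2)\ge r_2(Y_2)$ for every $b_1$ in a basis $B_1$, the column direct-sum over $B_1$, and the coupling equality $r(S_1\times Y_2)=r_1(S_1)\cdot r_2(Y_2)$ force equality on every fiber; general $Y_1\times Y_2$ is then handled via (ii)$\Rightarrow$(i) (cited from Las Vergnas in the paper; alternatively, with the fiber equalities in hand, $I_1\times I_2$ spans $Y_1\times Y_2$, so $r(Y_1\times Y_2)\le|I_1|\cdot|I_2|$). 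The same criticism applies to your (ii)$\Rightarrow$(iii): the claim that a basis of $M_1|Y_1$ crossed with $B_2$ is independent does not follow merely from every row and column fiber realizing $M_1$, resp.\ $M_2$; you again need a skewness argument, e.g.\ that $B_1\times S_2$ spans $S$, whence $r(B_1\times S_2)=r(S)=\sum_{b\in B_1}r(\{b\}\times S_2)$ and the column decomposition follows. So the two substantive implications are either asserted or argued with hypotheses that fail under the assumptions available at that point, and the proposal as written does not go through without supplying the squeeze argument and redirecting the direct-sum lemma to the column fibers.
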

\begin{proof}
The equivalence of \ref{it:tensor} and \ref{it:fibers} was shown by Las Vergnas~\cite{las1981products}, and it is also clear that \ref{it:tensor} implies \ref{it:coupling}. Assume now that \ref{it:coupling} holds, we prove that it implies \ref{it:fibers}. For the proof, we will use the following simple observation.

\begin{cl} \label{cl:direct_sum}
If $J_1\subseteq S_1$ is independent in $M_1$ and $J_2\subseteq S_2$ is independent in $M_2$, then 
\[M|(J_1 \times S_2) = \bigoplus\limits_{e_1 \in J_1} M|(\{e_1\} \times S_2) \text{ and } M|(S_1 \times J_2) = \bigoplus\limits_{e_2 \in J_2} M|(S_1 \times \{e_2\}).\]
\end{cl}
\begin{proof}
Using that $M$ is a coupling, we get that \[\sum_{e_1 \in J_1} r(\{e_1\} \times S_2) = \sum_{e_1 \in J_1} r_1(e_1) \cdot r_2(S_2) = |J_1| \cdot r_2(S_2) = r_1(J_1)\cdot  r_2(S_2) = r(J_1 \times S_2),\] implying 
 $M|(J_1 \times S_2) = \bigoplus_{e_1 \in J_1} M|(\{e_1\} \times S_2)$. The second statement follows similarly.
\end{proof}

First we show that $r(Y_1\times Y_2) \ge r_1(Y_1)\cdot r_2(Y_2)$ holds for each $Y_1 \subseteq S_1$ and $Y_2 \subseteq S_2$. Let $I_1 \subseteq Y_1$ and $I_2 \subseteq Y_2$ be maximal independent sets of $M_1$ and $M_2$ contained in $Y_1$ and $Y_2$, respectively.
\cref{cl:direct_sum} implies that each circuit contained in $I_1 \times S_2$ intersects at most one of the sets of the form $\{e_1\} \times S_2$ for $e_1 \in I_1$. Similarly, each circuit contained in $S_1 \times I_2$ intersects at most one of the sets of the form $S_1 \times \{e_2\}$ for $e_2 \in I_2$. These show that each circuit in $I_1 \times I_2$ is a loop. Since $r((e_1, e_2)) = r_1(e_1)\cdot r_2(e_2) = 1$ holds for $(e_1, e_2) \in I_1 \times I_2$ by assumption, the set $I_1\times I_2$ does not contain any loops. This proves that $I_1 \times I_2$ is independent in $M$, thus $r(Y_1 \times Y_2) \ge r_1(Y_1)\cdot r_2(Y_2)$.

Next we show that $r(\{e_1\} \times Y_2) = r_1(e_1)\cdot r_2(Y_2)$ holds for each $e_1 \in S_1$ and $Y_2 \subseteq S_2$. If $e_1$ is a loop in $M_1$, then $r(\{e_1\} \times S_2) = r_1(e_1)\cdot r_2(S_2) = 0$, thus  $r(\{e_1\} \times Y_2) = 0$. Otherwise, let $B_1$ be a basis of $M_1$ containing $e_1$. By the previous paragraph, we have $r(\{b_1\} \times Y_2) \ge r_2(Y_2)$ for $b_1 \in B_1$. Furthermore, $M|(B_1 \times Y_2) = \bigoplus_{b_1 \in B_1} M|(\{b_1\} \times Y_2)$ by \cref{cl:direct_sum}. Thus,
\[r(S_1\times Y_2) \ge r(B_1 \times Y_2) = \sum_{b_1 \in B_1} r(\{b_1\} \times Y_2) \ge \sum_{b_1 \in B_1} r_2(Y_2) = r_1(S_1)\cdot r_2(Y_2) = r(S_1\times Y_2),\]
where the last equality holds by the assumption that $M$ is a coupling. Therefore, equality holds throughout, implying that $r(\{b_1\} \times Y_2) = r_2(Y_2)$ for each $b_1 \in B_1$. In particular, this shows that $r(\{e_1\} \times Y_2) = r_1(e_1)\cdot r_2(Y_2)$. Similarly, we get that $r(Y_1 \times \{e_2\}) = r_1(Y_1) \cdot r_2(e_2)$ for each $Y_1 \subseteq S_1$ and $e_2 \in S_2$. Finally, $r(S)=r_1(S_1)\cdot r_2(S_2)$ follows from the assumption that $M$ is a coupling.
\end{proof}

If $e_1$ or $e_2$ is a loop in the corresponding matroid, then $(e_1,e_2)$ is a loop in any coupling by definition. Hence if $(e_1,e_2)$ is not a loop in a coupling, then $r((e_1,e_2))=r_1(e_1)\cdot r_2(e_2)$ holds. Therefore, an interesting consequence of Theorem~\ref{thm:tensor} is the following.

\begin{cor}
Any loopless coupling of two matroids is also a tensor product.
\end{cor}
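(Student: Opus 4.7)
The statement follows almost immediately by combining the remark preceding the corollary with the equivalence \ref{it:tensor} $\Leftrightarrow$ \ref{it:coupling} in Theorem~\ref{thm:tensor}. The plan is to verify that a loopless coupling satisfies condition \ref{it:coupling}, namely that $r((e_1, e_2)) = r_1(e_1) \cdot r_2(e_2)$ holds for every pair $(e_1, e_2) \in S_1 \times S_2$; then \ref{it:tensor} of Theorem~\ref{thm:tensor} gives the tensor product identity $r(Y_1 \times Y_2) = r_1(Y_1) \cdot r_2(Y_2)$ for all $Y_1 \subseteq S_1$ and $Y_2 \subseteq S_2$.

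The key dichotomy is whether $e_1$ (resp.\ $e_2$) is a loop in $M_1$ (resp.\ $M_2$). First I would observe that if $e_1$ is a loop in $M_1$, then applying the coupling property with $Y_1 = \{e_1\}$ gives $r(\{e_1\} \times S_2) = r_1(e_1) \cdot r_2(S_2) = 0$, and monotonicity of $r$ forces $r(\{(e_1, e_2)\}) = 0$, so $(e_1, e_2)$ would be a loop of $M$; the analogous argument handles $e_2$ being a loop in $M_2$. Since $M$ is loopless by hypothesis, neither case can occur: for every $(e_1, e_2) \in S$, both $e_1$ and $e_2$ are non-loops in their respective matroids, hence $r_1(e_1) = r_2(e_2) = 1$.

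Now since $(e_1, e_2)$ itself is not a loop of $M$, we have $r(\{(e_1,e_2)\}) = 1 = r_1(e_1) \cdot r_2(e_2)$. This verifies the pointwise identity required by \ref{it:coupling}, and together with the assumption that $M$ is a coupling, Theorem~\ref{thm:tensor} yields \ref{it:tensor}, which is exactly the definition of $M$ being a tensor product of $M_1$ and $M_2$.

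There is essentially no obstacle here, as all the heavy lifting has already been done in Theorem~\ref{thm:tensor}; the only substantive point is the two-line verification that looplessness of $M$ rules out loops of $M_1$ and $M_2$ appearing as coordinates, which is a direct consequence of the coupling identity together with monotonicity.
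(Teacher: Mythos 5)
Your proof is correct and takes essentially the same approach as the paper: the remark preceding the corollary encapsulates exactly the argument you spell out (looplessness of the coupling forces every $e_1, e_2$ to be non-loops, so $r((e_1,e_2)) = 1 = r_1(e_1)\cdot r_2(e_2)$), and then Theorem~\ref{thm:tensor}\ref{it:coupling}$\Rightarrow$\ref{it:tensor} finishes the job.
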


\subsubsection{Linear matroids and Ingleton's inequality}
\label{sec:ingleton}

Tools for determining the linear representability of a matroid include the use of excluded minors and Ingleton's inequality~\cite{ingleton1971representation}, who showed that the rank function of any linear matroid $M=(S,r)$ satisfies 
\begin{align}
r(A\cup B) + r(A\cup C)+r(A\cup D)+r(B\cup C)+r(B\cup D) \ge \nonumber \\
r(A)+r(B)+r(A\cup B \cup C) + r(A \cup B \cup D)+r(C\cup D) 
\label{ineq:ingleton}
\end{align}
for every $A,B,C,D\subseteq S$. In this sense, the inequality describes a necessary condition for linearity.
Since the uniform matroid $U_{2,3}$ is regular, it has a matroid tensor product with any linear matroid. In other words, for a matroid, the existence of a matroid tensor product with $U_{2,3}$ is also a necessary condition for linearity. Our next result shows that this property does not only imply Ingleton's inequality for polymatroid functions but also yields a slightly weaker form for general submodular functions.

\begin{thm} \label{thm:tensor_ingleton}
Let $\varphi_1\colon 2^{S_1}\to \bR$ be a submodular function and let $\varphi_2\colon 2^{S_2} \to \bR_+$ denote the rank function of the uniform matroid $U_{2,3}$. 
\begin{enumerate}[label=(\alph*)]\itemsep0em
\item  If $\varphi_1$ and $\varphi_2$ have a submodular tensor product, then $\varphi_1$ satisfies Ingleton's inequality \eqref{ineq:ingleton} for all pairwise disjoint sets $A, B, C, D \subseteq S_1$. \label{it:ingleton1}
\item If $\varphi_1$ and $\varphi_2$ have a tensor product which is a polymatroid function, then  $\varphi_1$ satisfies Ingleton's inequality \eqref{ineq:ingleton} for all sets $A, B, C, D \subseteq S_1$. \label{it:ingleton2}
\end{enumerate}
\end{thm}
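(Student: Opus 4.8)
The plan is to exploit the product structure of a tensor product $\varphi$ of $\varphi_1$ with the rank function $\varphi_2$ of $U_{2,3}$ in order to simulate, via submodular inequalities applied to well-chosen subsets of $S_1\times S_2$, exactly the linear combination of rank values that appears in Ingleton's inequality \eqref{ineq:ingleton}. The key point is that $\varphi_2$ is the rank function of $U_{2,3}$ on a three-element set $S_2=\{1,2,3\}$: we have $\varphi_2(\emptyset)=0$, $\varphi_2$ of a singleton is $1$, and $\varphi_2$ of any set of size $\geq 2$ is $2$. So if we denote the three elements of $S_2$ by $1,2,3$, the set function $X\mapsto \varphi(X\times S_2)=2\varphi_1(X)$ on one hand, and products like $\varphi(X\times\{1,2\})=2\varphi_1(X)$, $\varphi(X\times\{1\})=\varphi_1(X)$ on the other, let us produce coefficients $1$ and $2$ in front of $\varphi_1$-values at will. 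The classical fact (essentially due to Ingleton, and reproved by Dress--Lovász and others) is that Ingleton's inequality for $\varphi_1$ is equivalent to submodularity of the ``doubled'' function together with the combinatorial structure of $U_{2,3}$; the rigorous bridge is to write Ingleton as a sum of instances of submodularity of $\varphi$.

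Concretely, first I would fix notation: write $S_2=\{1,2,3\}$, and for subsets $A,B,C,D\subseteq S_1$ form the following subsets of $S=S_1\times S_2$: $P = A\times\{1,2,3\}$, and then attach $C$ and $D$ along distinct coordinates, e.g. consider sets of the form $(A\times S_2)\cup (C\times\{1\})\cup(D\times\{2\})$ and similar. The idea is that when we evaluate $\varphi$ on such a ``staircase'' set and use that $\varphi$ is a tensor product on each product piece, the value decomposes into $\varphi_1$-values with the multiplicities $1$ and $2$ coming from $|S_2\cap(\cdot)|$. Then I would apply submodularity of $\varphi$ — in the disjoint case (part \ref{it:ingleton1}), only to genuinely \emph{disjoint} unions so that $\varphi$ is additive (rank of a direct sum), which is why disjointness is exactly what makes the weaker statement go through; in the polymatroid case (part \ref{it:ingleton2}), the stronger hypothesis that $\varphi$ is increasing and submodular lets us drop disjointness, reducing the general case to the disjoint case by the standard trick of replacing $A,B,C,D$ with $A, B\setminus A, C\setminus(A\cup B), D\setminus(A\cup B)$ — wait, more carefully, by using monotonicity to first assume $A\subseteq$ everything and $B,C,D$ pairwise disjoint outside $A$, which is the usual reduction showing Ingleton in general follows from Ingleton for the ``normalized'' configuration.

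For part \ref{it:ingleton1}: with $A,B,C,D$ pairwise disjoint, the matroid/submodular function $\varphi$ restricted to $(A\cup B\cup C\cup D)\times S_2$ can be analyzed by choosing two submodular inequalities (or their sum) of the form $\varphi(U)+\varphi(V)\geq \varphi(U\cap V)+\varphi(U\cup V)$ where $U,V$ are unions of $X\times\{i\}$ pieces arranged so that $U\cap V$ and $U\cup V$ produce the terms $r(A\cup B\cup C),r(A\cup B\cup D),r(C\cup D),r(A),r(B)$ on the right and $r(A\cup B),r(A\cup C),r(A\cup D),r(B\cup C),r(B\cup D)$ on the left, each scaled by an appropriate count of $S_2$-coordinates. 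Because $\varphi$ is a tensor product, $\varphi((X)\times T) = \varphi_1(X)\varphi_2(T)$, and because the unions appearing are \emph{disjoint} in the $S_1$-coordinate over the pieces where needed, the non-product sets also evaluate cleanly to sums of $\varphi_1$-terms. Collecting the coefficients and dividing by the common factor (the value $\varphi_2$ contributes, which will be a fixed positive integer like $2$ or $3$) yields exactly \eqref{ineq:ingleton}.

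The main obstacle I expect is purely combinatorial bookkeeping: finding the right sets $U,V\subseteq S_1\times S_2$ (possibly one needs to add a second application of submodularity, i.e. two instances summed) whose intersection/union pattern reproduces the asymmetric $5$-term-vs-$5$-term Ingleton combination with the correct integer multiplicities, and verifying that on the non-product sets involved $\varphi$ still evaluates to the intended sum of $\varphi_1$-values — this is where disjointness of $A,B,C,D$ is genuinely used in part \ref{it:ingleton1} (a non-product set that is a disjoint union of product cells has $\varphi$-value equal to the sum over cells only because $\varphi$ is a direct sum there, which for a bare submodular function requires the cells to be on disjoint $S_1$-coordinates). For part \ref{it:ingleton2}, the extra obstacle is the reduction from arbitrary $A,B,C,D$ to the disjoint normalized case; this is a known but slightly fiddly manipulation that uses monotonicity and submodularity of $\varphi_1$ itself (which holds since $\varphi_1$ is submodular, and $\varphi_1$ increasing follows once $\varphi$ is a polymatroid, by $\varphi_1(X)=\varphi(X\times S_2)/\varphi_2(S_2)$ and monotonicity of $\varphi$), so I would carry it out by first handling disjoint configurations and then quoting/deriving the standard reduction.
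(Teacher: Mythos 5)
Your plan founders on one central point: you intend to actually \emph{evaluate} the tensor product $\varphi$ on non-product ``staircase'' sets such as $(A\times S_2)\cup(C\times\{1\})\cup(D\times\{2\})$, and you justify this by asserting that, when $A,B,C,D$ are pairwise disjoint, $\varphi$ is ``a direct sum'' across fibers over disjoint $S_1$-parts, so its value is the sum of the values on the product cells. That is not available from the hypotheses. The tensor-product condition pins down $\varphi$ only on product sets $Y_1\times Y_2$; a submodular (or even matroidal, or polymatroidal) tensor product is in no way additive over a partition of the $S_1$-coordinate, and its values on non-product sets are simply not determined by $\varphi_1$ and $\varphi_2$ (tensor products are not even unique). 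The paper's proof is built precisely to avoid this: it writes down eleven inequalities of the form $\varphi(Y_1)+\varphi(Y_2)\ge\varphi(W)+\varphi(Y_1\cup Y_2)$ in which every non-product set occurs once on a left-hand side and once on a right-hand side, so that after summation all non-product terms \emph{cancel} and only product sets remain, which are then evaluated via $\varphi(Y_1\times Y_2)=\varphi_1(Y_1)\cdot\min\{|Y_2|,2\}$; a final symmetrization in $C$ and $D$ turns the resulting term $2\varphi_1(A\cup B\cup C)$ into $\varphi_1(A\cup B\cup C)+\varphi_1(A\cup B\cup D)$ and yields \eqref{ineq:ingleton}. Without this cancellation idea your bookkeeping cannot close, and your estimate that ``two submodular inequalities (or their sum)'' suffice is far off -- Ingleton is not a consequence of a couple of submodularity instances, which is exactly why the long chain is needed.

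The treatment of part \ref{it:ingleton2} has a second gap. You propose to reduce arbitrary $A,B,C,D$ to the disjoint case ``by the standard trick,'' using only monotonicity and submodularity of $\varphi_1$; no such reduction is standard, and it is not clear that a polymatroid satisfying Ingleton on all pairwise disjoint quadruples must satisfy it on overlapping ones -- at any rate you would have to prove this, and you give no argument. (A reduction via parallel extensions of $\varphi_1$ could be contemplated, but then you must also extend the tensor product $\varphi$ to the enlarged ground set and verify it remains a polymatroid tensor product, a step that itself uses monotonicity of $\varphi$ and is absent from your proposal.) The paper handles general sets differently and more directly: when $A,B,C,D$ overlap, the same eleven inequalities are no longer exact submodularity instances -- they have the shape $\varphi(Y_1)+\varphi(Y_2)\ge\varphi(Z)+\varphi(Y_1\cup Y_2)$ with $Z$ merely a subset of $Y_1\cap Y_2$ -- and this weaker form is exactly what monotonicity plus submodularity of the tensor product (the polymatroid hypothesis of \ref{it:ingleton2}) provides. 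This is where the distinction between (a) and (b) really lives, and it is the piece your proposal does not supply. The one correct observation you make along these lines (that $\varphi_1$ is increasing in case (b) because $\varphi_1(X)=\varphi(X\times S_2)/2$) is not by itself enough to carry out any reduction.
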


\begin{proof}
Let $A, B, C, D \subseteq S_1$, $S\coloneqq S_1 \times S_2$, and $\varphi \colon 2^S \to \bR$ be a submodular tensor product of $\varphi_1$ and $\varphi_2$ which is increasing when proving \ref{it:ingleton2}. We denote the elements of $S_2$ by $1$, $2$, and $3$. To improve readability, we use several abbreviations in the following inequalities: for $H=\{h_1,\dots, h_k\}\subseteq \{1,2,3\}$, we use $h_1\dots h_k$ to denote $H$, for $X\in \{A, B, C, D\}$, we let $X_H$ denote $X \times H$. Lastly, we omit the $\cup$ symbols, e.g.,  $AB\times 12$ denotes $(A\cup B) \times \{1,2\}$ and $A_{12}B_{13}$ denotes $(A \times \{1,2\}) \cup (B\times \{1,3\})$. Consider the following inequalities.
\begin{align*}
\varphi(AB \times 2) + \varphi(A_{123} C_1) & \ge \varphi(A\times 2) + \varphi(A_{123} B_2 C_1) \\ 
\varphi(AC \times 1 ) + \varphi(A \times 123) & \ge \varphi(A\times 1) + \varphi(A_{123}  C_1) \\ 
\varphi(AD \times 3) + \varphi(A_3 B_2 C_1) & \ge \varphi(A \times 3) + \varphi(A_3 B_2 C_1 D_3) \\ 
\varphi(BD \times 2) + \varphi(A_3 B_2  C_1  D_3) & \ge \varphi(B \times 2) + \varphi(A_3 B_2  C_1  D_{23}) \\ 
\varphi(ABC \times 3) + \varphi(BC \times 123)  & \ge \varphi(BC \times 3) + \varphi(A_3 B_{123} C_{123}) \\
\varphi(ABCD \times 1) + \varphi(A_3 B_2 C_1 D_{123}) & \ge \varphi(CD \times 1) + \varphi(A_{13}  B_{12}  C_1 D_{123}) \\
\varphi(ABCD \times 2) + \varphi(A_{123}  B_{12}  C_1  D_{123}) & \ge \varphi(ABC \times 2) + \varphi(A_{123} B_{12} C_{12} D_{123}) \\
\varphi(A \times 123) + \varphi(A_{13} B_{12} C_1 D_{123}) & \ge \varphi(A \times 13) + \varphi(A_{123} B_{12}  C_1 D_{123}) \\
\varphi(A_{123}  B_2  C_1) + \varphi(A_3  B_{123}  C_{123}) & \ge \varphi(A_3 B_2 C_1) + \varphi(ABC \times 123) \\
\varphi(D \times 123) + \varphi(A_3 B_2 C_1 D_{23}) & \ge \varphi(D \times 23) + \varphi(A_3 B_2 C_1 D_{123}) \\
\varphi(BC \times 123) + \varphi(A_{123} B_{12} C_{12} D_{123}) & \ge \varphi(BC \times 12) + \varphi(ABCD \times 123)
\end{align*}
If $A$, $B$, $C$, and $D$ are pairwise disjoint, then each of the inequalities has form $\varphi(Y_1)+\varphi(Y_2) \ge \varphi(Y_1 \cap Y_2) + \varphi(Y_1 \cup Y_2)$ for some $Y_1 \subseteq S_1$ and $Y_2 \subseteq S_2$, thus each of them holds by the submodularity of $\varphi$.  In general, if they are not necessarily pairwise disjoint, then each of them has form $\varphi(Y_1) + \varphi(Y_2) \ge \varphi(Z) + \varphi(Y_1 \cup Y_2)$ for some $Y_1 \subseteq S_1$, $Y_2 \subseteq S_2$ and $Z \subseteq Y_1 \cap Y_2$, thus each of them holds if $\varphi$ is a polymatroid function. Therefore, for proving statements \ref{it:ingleton1} and \ref{it:ingleton2}, we may assume that each of these eleven inequalities holds.

In each of the inequalities, substitute $\varphi(Y_1 \times Y_2) = \varphi_1(Y_1)\cdot \varphi_2(Y_2) = \varphi_1(Y_1) \cdot \min \{|Y_2|, 2\}$ for each set of the form $Y_1 \times Y_2$ for $Y_1 \subseteq S_1$ and $Y_2 \subseteq S_2$. Then, from the sum of above eleven inequalities, we obtain
\begin{align} \label{ineq:almost_ingleton1}
\varphi_1(AB)+\varphi_1(AC)+\varphi_1(AD)+\varphi_1(BC)+\varphi_1(BD) & \ge \varphi_1(A)+\varphi_1(B) + 2 \varphi_1(ABC) + \varphi_1(CD).
\end{align}
Similarly, by the symmetry of the roles of $C$ and $D$, we obtain
\begin{align} \label{ineq:almost_ingleton2}
\varphi_1(AB)+\varphi_1(AC)+\varphi_1(AD)+\varphi_1(BC)+\varphi_1(BD) & \ge \varphi_1(A)+\varphi_1(B) + 2 \varphi_1(ABD) + \varphi_1(CD).
\end{align}
The sum of \eqref{ineq:almost_ingleton1} and \eqref{ineq:almost_ingleton2} yields Ingleton's inequality \eqref{ineq:ingleton}.
\end{proof}

It was shown by Las Vergnas~\cite{las1981products} that the Vámos matroid and the uniform matroid $U_{2,3}$ do not have a matroid tensor product. The Vámos matroid is the matroid on ground set $\{a_1, a_2, b_1, b_2, c_1, c_2, d_1, d_2\}$ in which each subset of size 4 is a basis except for $A\cup B$, $A\cup C$, $A \cup D$, $B\cup C$ and $B\cup D$, where $A=\{a_1, a_2\}$, $B=\{b_1, b_2\}$, $C=\{c_1, c_2\}$ and $D=\{d_1, d_2\}$. As the rank function of the Vámos matroid does not satisfy Ingleton's inequality~\eqref{ineq:ingleton} for the pairwise disjoint sets $A$, $B$, $C$, and $D$, \cref{thm:tensor_ingleton} implies the following strengthening of the result of Las Vergnas~\cite{las1981products}.

\begin{cor}
The rank function of the Vámos matroid does not have a submodular tensor product with the rank function of the uniform matroid $U_{2,3}$.
\end{cor}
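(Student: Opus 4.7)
The plan is to apply \cref{thm:tensor_ingleton}\ref{it:ingleton1} in its contrapositive form. The rank function $r$ of the Vámos matroid is submodular, and the four sets $A = \{a_1, a_2\}$, $B = \{b_1, b_2\}$, $C = \{c_1, c_2\}$, $D = \{d_1, d_2\}$ are pairwise disjoint by construction. So if a submodular tensor product with the rank function of $U_{2,3}$ existed, the theorem would force $r$ to satisfy Ingleton's inequality \eqref{ineq:ingleton} on this particular quadruple, and it suffices to exhibit a violation.

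Concretely, I would read off the relevant ranks directly from the definition of the Vámos matroid: the five ``non-basis'' 4-element subsets $A\cup B$, $A\cup C$, $A\cup D$, $B\cup C$, $B\cup D$ each have rank $3$, every other 4-element subset is a basis (hence of rank $4$), and all 2-element subsets such as $A$ and $B$ have rank $2$. In particular $r(C\cup D) = 4$, and $r(A\cup B\cup C) = r(A\cup B\cup D) = 4$ because each of these 6-element sets contains a 4-element basis. Plugging these values into \eqref{ineq:ingleton} gives
\begin{equation*}
5\cdot 3 \;=\; r(A\cup B)+r(A\cup C)+r(A\cup D)+r(B\cup C)+r(B\cup D),
\end{equation*}
\begin{equation*}
r(A)+r(B)+r(A\cup B\cup C)+r(A\cup B\cup D)+r(C\cup D) \;=\; 2+2+4+4+4 \;=\; 16,
\end{equation*}
so the left-hand side equals $15$ while the right-hand side equals $16$, violating \eqref{ineq:ingleton}. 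By \cref{thm:tensor_ingleton}\ref{it:ingleton1}, no submodular tensor product of $r$ with the rank function of $U_{2,3}$ can exist.

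There is no real obstacle here: all the heavy lifting is done by \cref{thm:tensor_ingleton}\ref{it:ingleton1}, which was designed precisely to reduce such questions to checking Ingleton on the given four pairwise disjoint sets. The only work left is the routine rank computation above, which is immediate from the Vámos matroid's defining list of non-bases.
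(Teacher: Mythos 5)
Your proposal is correct and matches the paper's approach exactly: the paper likewise invokes \cref{thm:tensor_ingleton} in contrapositive form after observing that the Vámos matroid violates Ingleton's inequality \eqref{ineq:ingleton} on the four pairwise disjoint pairs $A,B,C,D$. You merely spell out the routine rank computation ($15 < 16$) that the paper takes as known.
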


For any fixed ground set $S_1$ and fixed matroid $N=(S_2, r_2)$, the set of submodular (or polymatroid) functions $\varphi_1\colon 2^{S_1} \to \bR$ which have a tensor product with $r_2$ forms a polyhedron in $\bR^{2^{S_1}}$, that is, they can be described as functions $\varphi_1\colon 2^{S_1} \to \bR$ satisfying certain linear inequalities. If we choose $N$ to be a regular matroid, then each of these inequalities is a {\it linear rank inequality}, that is, a linear inequality satisfied by the rank function of any representable matroid. 
Ingleton's inequality~\eqref{ineq:ingleton} was the first known linear rank inequality, and since then infinite families of independent linear rank inequalities has been found~\cite{dougherty2009linear, kinser2011new}. Interestingly, all currently known linear rank inequalities follow from the so-called {\it common information property} of the rank function of representable matroids~\cite{dougherty2009linear, bamiloshin2021common}, though other linear inequalities have been found that are satisfied by rank functions of matroids representable over fields of certain characteristics but not by all representable matroids~\cite{dougherty2014characteristic}. For defining the common information property, let us call every polymatroid function {\it $0$-CI-compliant}, and call a polymatroid function $f\colon 2^S\to \bR$ is {\it $k$-CI-compliant} if for any subsets $A, B \subseteq S$, there exists a $(k-1)$-compliant function $f'\colon 2^{S'}\to \bR$ and a subset $Z\subseteq S'$ such that $S \subseteq S'$, $f'(X) = f(X)$ for all $X \subseteq S$, $f'(A\cup Z) = f(A)$, $f'(B\cup Z) = f(B)$, and $f'(Z) = f(A)+f(B)-f(A\cup B)$. Then, a polymatroid satisfies the {\it common information property} if it is {\it $k$-CI-compliant} for each integer $k \ge 0$.  Note that the common information property is not sufficient for the representability of a matroid, e.g.\ all rank-3 matroids satisfy it~\cite{bamiloshin2021common} while e.g.\ the non-Desargues matroid is not representable~\cite{ingleton1971representation}.
It would be interesting to see the relations between the class of polymatroid functions having tensor products with $U_{2,3}$, the class of polymatroid functions having tensor product with all regular matroids, and the class of polymatroid functions satisfying the common information property.

\subsection{Coverage and \texorpdfstring{$k$}{k}-alternating functions}
\label{sec:infinite}

Coverage functions naturally arise in applications like set cover, influence maximization, sensor placement, feature selection, and data summarization. The goal of this section is to show that if one of the functions is a coverage function, then there exist couplings with stronger properties. For some $k\in\bZ_{>0}$, let us call a function $\varphi\colon2^S\to\bR_+$ {\it $k$-alternating} if $\varphi(\emptyset)=0$ and it satisfies \ref{eq:kalt} for the given $k$.

\begin{thm} \label{thm:infk}
Let $\varphi_1\colon 2^{S_1} \to \bR_+$ be a coverage function and $\varphi_2\colon 2^{S_2}\to\bR_+$ be a $k$-alternating function over ground sets $S_1$ and $S_2$, respectively. Then, $\varphi_1$ and $\varphi_2$ have a $k$-alternating tensor product which is integer-valued if $\varphi_1$ and $\varphi_2$ are integer-valued. 
\end{thm}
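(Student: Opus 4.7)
My plan is to exploit the extremal-ray description of coverage functions from Proposition~\ref{prop:extremal} in order to reduce to a very concrete case, construct an explicit tensor product there, and then glue by linearity.

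First, for any fixed nonempty $A\subseteq S_1$, I would define a candidate $\psi_A\colon 2^{S_1\times S_2}\to\bR_+$ by
\[
\psi_A(Z)\ \coloneqq\ \varphi_2\Bigl(\pi_2\bigl(Z\cap (A\times S_2)\bigr)\Bigr).
\]
A direct check on product sets gives $\psi_A(X_1\times X_2)=\varphi_2(X_2)$ if $X_1\cap A\neq\emptyset$ and $0$ otherwise, so $\psi_A(X_1\times X_2)=\varphi_A(X_1)\cdot\varphi_2(X_2)$; hence $\psi_A$ is a tensor product of $\varphi_A$ and $\varphi_2$. Verifying the \ref{eq:kalt} inequality for $\psi_A$ is the main step. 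Given $A_0,A_1,\dots,A_k\subseteq S_1\times S_2$, I would let $B_i\coloneqq \pi_2(A_i\cap(A\times S_2))$ and use the elementary fact that $\pi_2$ and intersection with the ``slab'' $A\times S_2$ both commute with unions, so that
\[
\pi_2\Bigl(\bigl(A_0\cup\textstyle\bigcup_{i\in K}A_i\bigr)\cap(A\times S_2)\Bigr)\ =\ B_0\cup\bigcup_{i\in K} B_i.
\]
Substituting into the alternating sum reduces the inequality verbatim to the \ref{eq:kalt} inequality for $\varphi_2$ applied to $B_0,B_1,\dots,B_k$, which holds by hypothesis; together with $\psi_A(\emptyset)=\varphi_2(\emptyset)=0$, this shows $\psi_A$ is $k$-alternating.

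Next, by Proposition~\ref{prop:extremal}, I can write $\varphi_1=\sum_{\emptyset\neq A\subseteq S_1} c_A\,\varphi_A$ with $c_A\geq 0$, and set $\varphi\coloneqq \sum_A c_A\,\psi_A$. Linearity in the first factor yields $\varphi(X_1\times X_2)=\varphi_1(X_1)\cdot\varphi_2(X_2)$. Since the class of $k$-alternating functions is a convex cone (the left-hand side of \ref{eq:kalt} is linear in the function, and the constraint $\varphi(\emptyset)=0$ is preserved under nonnegative combinations), $\varphi$ is itself $k$-alternating and nonnegative.

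Finally, for the integrality claim, I would argue that when $\varphi_1$ is integer-valued the coefficients $c_A$ are automatically nonnegative integers. Using Möbius inversion on the Boolean lattice applied to $F(X)\coloneqq\sum_{\emptyset\neq A\subseteq X}c_A=\varphi_1(S_1)-\varphi_1(X^c)$, one obtains, for nonempty $A$, a formula
\[
c_A\ =\ -\sum_{B\subseteq A}(-1)^{|A|-|B|}\varphi_1(B^c),
\]
which is an integer combination of values of $\varphi_1$; combined with the already known nonnegativity, this gives $c_A\in\bZ_+$. Since each $\psi_A$ is integer-valued whenever $\varphi_2$ is, the combination $\varphi=\sum c_A\psi_A$ inherits integrality. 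The main obstacle I anticipate is not the decomposition bookkeeping but the $k$-alternating verification for $\psi_A$; everything else is a clean linear-algebraic assembly once the correct formula for $\psi_A$ is identified.
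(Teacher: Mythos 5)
Your proposal matches the paper's proof essentially step for step: the same extremal-ray decomposition via Proposition~\ref{prop:extremal}, the same formula $\psi_A(Z)=\varphi_2(\pi_2(Z\cap(A\times S_2)))$ for the tensor product with a single extreme ray $\varphi_A$, the same reduction of the $k$-alternating inequality for $\psi_A$ to that for $\varphi_2$ via commutativity of $\pi_2$ and intersection-with-a-slab with unions, and the same nonnegative-cone assembly. The only genuine addition is your explicit M\"obius-inversion justification that the coefficients $c_A$ are integers when $\varphi_1$ is integer-valued (which works, since the $\varphi_A$ for nonempty $A$ form a basis of $\{\varphi\colon 2^{S_1}\to\bR\mid \varphi(\emptyset)=0\}$); the paper leaves this point implicit.
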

\begin{proof} 
Assume first that $\varphi_1=\varphi_A$ for some $\emptyset\neq A\subseteq S_1$ as defined in \eqref{eq:extremal_def}. We define the tensor product of $\varphi_{A}$ and $\varphi_2$ to be 
\begin{equation*}
(\varphi_A\otimes \varphi_2)(X)\coloneqq \varphi_2(\pi_2((A\times S_2)\cap X))   
\end{equation*}
for any $X\subseteq S_1\times S_2$. For each pair of sets $Y_1\subseteq S_1$, $Y_2\subseteq S_2$, we have $(\varphi_A\otimes \varphi_2)(Y_1\times Y_2)=\varphi_2(Y_2)$ if $A\cap Y_1\neq \emptyset$ and $(\varphi_A\otimes \varphi_2)(Y_1\times Y_2)=0$ if $ A\cap Y_1=\emptyset$,
thus $\varphi_A\otimes \varphi_2$ is indeed a tensor product. We claim that it is $k$-alternating as well. To see this, let $X_0,X_1, \dots, X_k\subseteq S_1\times S_2$. Then, the  $k$-alternating condition \ref{eq:kalt} requires that
\begin{align*}
\sum_{K \subseteq [k]} (-1)^{|K|}(\varphi_A\otimes \varphi_2) \bigl(X_0 \cup \bigcup_{i \in K} X_i\bigr)
&= 
\sum_{K \subseteq [k]} (-1)^{|K|}\varphi_2\Bigl(\pi_2{\bigl(}(A\times S_2)\cap(X_0 \cup \bigcup_{i \in K} X_i)\bigr)\Bigr)\\
&= 
\sum_{K \subseteq [k]} (-1)^{|K|}\varphi_2\Bigl(\pi_2{\bigl(}(A\times S_2)\cap X_0\bigr) \cup \bigcup_{i \in K} \pi_2 {\bigl(}(A\times S_2)\cap X_i)\bigr)\Bigr)\\
&=
\sum_{K \subseteq [k]} (-1)^{|K|}\varphi_2\bigl(Y_0 \cup \bigcup_{i \in K} Y_i\bigr)\\
&\leq 
0.
\end{align*}
This follows from the $k$-alternating condition~\ref{eq:kalt} for $\varphi_2$ and sets $Y_i=\pi_2((A\times S_2)\cap X_i)$.

In general, if $\varphi_1$ is a coverage function, then by Proposition~\ref{prop:extremal}, $\varphi_1=\sum_{\emptyset\neq A\subseteq S_1} c_A\cdot \varphi_A$ for some nonnegative coefficients $c_A\in\bR_+$ for all nonempty $A\subseteq S_1$. Define the tensor product of $\varphi_1$ and $\varphi_2$ to be 
\begin{equation*}
    \varphi_1\otimes \varphi_2\coloneqq\sum_{\emptyset\neq A\subseteq S_1} c_A\cdot (\varphi_A \otimes \varphi_2).
\end{equation*}
Then, the resulting function over $S_1\times S_2$ is a tensor product of $\varphi_1$ and $\varphi_2$. Indeed, $(\varphi_1\otimes \varphi_2)(Y_1\times Y_2)= \sum_{\emptyset\neq A\subseteq S_1} c_A \cdot (\varphi_A \otimes \varphi_2) (Y_1\times Y_2)=(\sum_{A\cap Y_1\neq \emptyset} c_A)\cdot \varphi_2(Y_2)=\varphi_1(Y_1)\cdot \varphi_2(Y_2) $. Finally, observe that $\varphi_1\otimes\varphi_2$ is $k$-alternating as it is a nonnegative combination of $k$-alternating functions.
\end{proof}

As noted earlier, \ref{eq:kalt} for $k=1$ corresponds to the increasing property, while for $k=2$ it is equivalent to the increasing submodularity of the function. Thus, by applying Theorem~\ref{thm:infk} to the case when $k=2$, we get the following.

\begin{cor}\label{cor:inftens}
Any coverage function $\varphi_1$ and increasing submodular function $\varphi_2$ have an increasing submodular tensor product. 
\end{cor}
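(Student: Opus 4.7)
The plan is to deduce the corollary as the specialization of Theorem~\ref{thm:infk} to $k = 2$, combined with the equivalence between the $2$-alternating condition and nonnegative increasing submodularity (which is the remark preceding the corollary). Once this equivalence is in place, both the hypothesis on $\varphi_2$ and the desired conclusion about $\varphi_1 \otimes \varphi_2$ translate directly between the two formulations, and the corollary reduces to an invocation of Theorem~\ref{thm:infk}.

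First I would verify the equivalence, which is asserted without proof in the text. The inequality \ref{eq:kalt} for $k = 2$ reads
\[
\varphi(A_0) - \varphi(A_0 \cup A_1) - \varphi(A_0 \cup A_2) + \varphi(A_0 \cup A_1 \cup A_2) \le 0.
\]
Substituting $A_0 = B_1 \cap B_2$, $A_1 = B_1 \setminus B_2$, $A_2 = B_2 \setminus B_1$ recovers ordinary submodularity for arbitrary $B_1, B_2$, and choosing $A_1 = A_2 = X \setminus A_0$ for any $A_0 \subseteq X$ forces $\varphi(A_0) \le \varphi(X)$, which is monotonicity. Conversely, if $\varphi$ is increasing and submodular with $\varphi(\emptyset) = 0$, then setting $C_1 \coloneqq A_0 \cup A_1$ and $C_2 \coloneqq A_0 \cup A_2$, submodularity gives $\varphi(C_1) + \varphi(C_2) \ge \varphi(C_1 \cap C_2) + \varphi(C_1 \cup C_2)$, and the inclusion $A_0 \subseteq C_1 \cap C_2$ together with monotonicity yields $\varphi(A_0) \le \varphi(C_1 \cap C_2)$, so \ref{eq:kalt} follows.

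Next I would apply Theorem~\ref{thm:infk} with $k = 2$ to $\varphi_1$ (already a coverage function by hypothesis) and to $\varphi_2$ (which is $2$-alternating by the equivalence just established, since it is nonnegative, increasing, submodular, with $\varphi_2(\emptyset) = 0$). The theorem produces a $2$-alternating tensor product $\varphi_1 \otimes \varphi_2$, and reading the equivalence in the opposite direction shows that $\varphi_1 \otimes \varphi_2$ is increasing and submodular, which is exactly the content of the corollary.

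There is no serious obstacle: the corollary is essentially a restatement of Theorem~\ref{thm:infk} in the combinatorially most natural case $k = 2$. The only step requiring any thought is the $2$-Alt equivalence, and even this is a short direct verification by specialising $A_0, A_1, A_2$ as above.
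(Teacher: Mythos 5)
Your proof is correct and takes exactly the same approach as the paper: the paper derives this corollary precisely by invoking Theorem~\ref{thm:infk} with $k=2$, relying on the asserted equivalence between the $2$-alternating condition and increasing submodularity. You additionally spell out the short verification of that equivalence, which the paper leaves to the reader, but the argument is otherwise identical.
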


\begin{rem}
Note that if we do not assume $\varphi_2$ to be increasing, then a submodular tensor product might not exist. As an example, let $S_1 \coloneqq \{a, b\}$, $\varphi_1 \coloneqq \varphi_{S_1}$, $S_2 \coloneqq \{1, 2\}$, and $\varphi_2\colon S_2 \to \bR_+$ defined by $\varphi_2(\emptyset)=\varphi_2(S_2)=0$ and $\varphi_2(1)=\varphi_2(2) = 1$, and assume that $\varphi$ is a submodular tensor product of $\varphi_1$ and $\varphi_2$. Then, using the notation $a_i = (a,i)$ and $b_i = (b,i)$ for $i\in \{1,2\}$, we have
\begin{align*}
\varphi(\{a_1, a_2, b_1\}) & = \varphi(\{a_1, a_2, b_1\}) + \varphi(\{a_1\}) - 1 \le \varphi(\{a_1, a_2\}) + \varphi(\{a_1, b_1\}) -1 = 0 + 1 -1 = 0, \\
\varphi(\{a_1, a_2, b_1\}) & = \varphi(\{a_1, a_2, b_1\} + \varphi(\{b_1, b_2\}) \ge \varphi(\{b_1\}) + \varphi(\{a_1, a_2, b_1, b_2\}) = 1 + 0 = 1,
\end{align*}
a contradiction.
\end{rem}

Another interesting special case is when both $\varphi_1$ and $\varphi_2$ are coverage functions.

\begin{cor}\label{cor:inf}
Any two coverage functions have a coverage tensor product.
\end{cor}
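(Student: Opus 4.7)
The plan is to invoke Theorem~\ref{thm:infk} directly, exploiting the fact that a coverage function is precisely a function $\varphi$ with $\varphi(\emptyset)=0$ that is $k$-alternating for \emph{every} positive integer $k$. So I would argue that the tensor product constructed in Theorem~\ref{thm:infk} does not depend on the value of $k$ used when checking the alternation inequality, and hence the single construction witnesses all of these properties simultaneously.

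More concretely, I would first apply Proposition~\ref{prop:extremal} to write $\varphi_1 = \sum_{\emptyset \neq A \subseteq S_1} c_A \cdot \varphi_A$ with nonnegative coefficients $c_A$, and define
\[
\varphi_1 \otimes \varphi_2 \;\coloneqq\; \sum_{\emptyset \neq A \subseteq S_1} c_A \cdot (\varphi_A \otimes \varphi_2),
\]
where $(\varphi_A \otimes \varphi_2)(X) = \varphi_2(\pi_2((A \times S_2) \cap X))$ exactly as in the proof of Theorem~\ref{thm:infk}. The fact that this function is a tensor product of $\varphi_1$ and $\varphi_2$ has already been verified in that proof and does not need to be redone here.

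It remains to show that $\varphi_1 \otimes \varphi_2$ is a coverage function. I would use the inequality-based characterization: check $(\varphi_1 \otimes \varphi_2)(\emptyset)=0$, which is immediate from $\varphi_2(\emptyset)=0$, and then verify \ref{eq:kalt} for every $k \in \bZ_{>0}$. Since $\varphi_2$ is a coverage function, it is $k$-alternating for every such $k$, so Theorem~\ref{thm:infk} (applied with the same $\varphi_1$ and $\varphi_2$, and with $k$ varying) guarantees that the \emph{same} explicit construction above is $k$-alternating for every $k$. Thus $\varphi_1 \otimes \varphi_2$ is a coverage function.

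I do not expect any real obstacle here; the only thing worth emphasizing in the write-up is that the construction given in Theorem~\ref{thm:infk} is intrinsic to $\varphi_1$ and $\varphi_2$ and is not indexed by $k$, so one gets $k$-alternation for all $k$ simultaneously from a single formula. If instead one wanted to avoid invoking Theorem~\ref{thm:infk} as a black box, the same extreme-ray argument works directly: it suffices to observe that each $\varphi_A \otimes \varphi_2$ is a coverage function (since for any $X_0,\dots,X_k \subseteq S_1 \times S_2$, the sets $Y_i \coloneqq \pi_2((A\times S_2)\cap X_i)$ reduce every \ref{eq:kalt} instance for $\varphi_A \otimes \varphi_2$ to one for $\varphi_2$), and then note that nonnegative combinations of coverage functions are coverage functions, as the defining inequalities \ref{eq:kalt} are preserved under such combinations.
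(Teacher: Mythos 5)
Your proposal is correct and follows the paper's own route: the paper derives Corollary~\ref{cor:inf} from Theorem~\ref{thm:infk} precisely because the tensor-product construction built there (via the extreme-ray decomposition of Proposition~\ref{prop:extremal}) does not depend on $k$, so it inherits $k$-alternation for every $k$ once $\varphi_2$ is $k$-alternating for every $k$. Your explicit unpacking of this — either by applying the theorem with $k$ varying or by re-running the reduction of \ref{eq:kalt} for $\varphi_A \otimes \varphi_2$ to \ref{eq:kalt} for $\varphi_2$ directly — is exactly the reasoning the paper leaves implicit.
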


\begin{rem}
In~\cite[Example 4.17]{bgils2024decomposition}, it is shown that the rank function of a matroid is a coverage function if and only if it is a partition matroid. Combining this observation with Theorem~\ref{thm:infk} implies that any matroid has a tensor product with any partition matroid, which is a well-known result on tensor products.

The smallest non-partition matroid is $U_{2,3}$, whose rank function is therefore not a coverage function. Interestingly, any matroid that is not a partition matroid contains $U_{2,3}$ as a minor. Additionally, recalling that any two matroids representable over the same field admit a tensor product explains why the smallest counterexample of a pair of matroids without a tensor product is the pair $U_{2,3}$ and the Vámos matroid.
\end{rem}

\subsection{Extension to the infinite case}
\label{sec:infground}

The main results of \cref{sec:submod} and \cref{sec:polymatroid} can be extended to pairs of bounded submodular set functions defined on infinite set algebras. Essentially the same ideas work as in the finite cases, i.e., in the proofs of \cref{thm:submod} and \cref{thm:poly}, but the reasoning is slightly more technical, and we have to integrate instead of taking sums. Moreover, we will need to integrate with respect to nonnegative charges. We follow the definitions of \cite{lovasz2023submodular}. Let $(J, \cB)$ be a set algebra, $\alpha \colon \cB \to \bR_+$ be a nonnegative charge, and $f \colon J \to \bR_+$ be a nonnegative bounded measurable function. Define
\begin{equation*}
    \hat{\alpha}(f)\coloneqq\int_{0}^{\infty} \alpha(\{f\geq t\}) \, \diff t.
\end{equation*}
Note that $\hat\alpha$ is {\it linear} in the sense that $\hat{\alpha}(c_1f_1+c_2f_2)= c_1\hat\alpha(f_1)+c_2\hat\alpha(f_2)$ for all nonnegative bounded measurable functions $f_1, f_2$, and $c_1, c_2 \in \bR_+$. Furthermore, $\hat\alpha$ is {\it positive}, meaning that $\hat{\alpha}(g) \geq \hat{\alpha}(f)$ if $g(x) \geq f(x)$ for all $x \in J$.

\begin{rem}
    In \cite{lovasz2023submodular}, the integral is defined for any set function $\alpha$ with bounded variation, and any bounded measurable function $f$. 
\end{rem}

Let $(J_1,\cB_1)$ and $(J_2,\cB_2)$ be set algebras and denote their product space by $(J, \cB)$.
Let $\varphi_1 \colon \cB_1 \to \bR_+$, $\varphi_2 \colon \cB_2 \to \bR_+$ be bounded submodular functions satisfying $\varphi_1(\emptyset)=\varphi_2(\emptyset)=0$, and let $\alpha_1 \colon \cB_1 \to \bR_+$, $\alpha_2 \colon \cB_2 \to \bR_+$ be charges. To make it easier to draw parallels with the finite case, by a slight abuse of notation, we denote by $\varphi_2(\pi_2(Z_x))$ the function $x \mapsto\varphi_2(\pi_2(Z_{x}))$ and by $\varphi_1(\pi_1(Z^y))$ the function $y \mapsto\varphi_1(\pi_1(Z^{y}))$ for any set $Z\in\cB$. Any set $Z$ of the product algebra has the form $Z=\cup_{i=1}^n A_i\times B_i$, where $A_i\in \cB_1$ and $B_i\in\cB_2$ for $i\in[n]$. Consider the finite algebras $\cA_1$, $\cA_2$ generated by the sets $\{A_i\}_{i=1}^n$ and the sets $\{B_i\}_{i=1}^n$, respectively. It is not difficult to see that $\varphi_2(\pi_2(Z_x))$ is constant on the atoms of $\cA_1$ and $\varphi_1(\pi_1(Z^y))$ is constant on the atoms of $\cA_2$. Consequently, both functions take only finitely many values, and the preimages of these values belong to $\cA_1$ and $\cA_2$, respectively. As $\cA_i\subseteq \cB_i$ for $i\in[2]$,  we conclude that both functions are measurable. Similarly to \eqref{eq:submod_coupling}, we can define 
\begin{equation} \label{eq:infsubmod_coupling}
    b(Z) \coloneqq \hat{\alpha}_1 (\varphi_2(\pi_2(Z_{x})))+\hat{\alpha}_2(\varphi_1(\pi_1(Z^{y}))) -\hat{\alpha}_1 (\alpha_2(\pi_2(Z_{x})))
\end{equation}
for all $Z \in \cB$. While this definition does not look symmetric at first, note that $\hat{\alpha}_1 (\alpha_2(\pi_2(Z_{x})))$ and $\hat{\alpha}_2 (\alpha_1(\pi_1(Z^{y})))$ agree on the semi-algebra of product sets and thus agree on the generated algebra, see e.g.~\cite[Theorem 3.5.1 (ii)]{rao1983theory}. Observe that if $Z=Y_1 \times Y_2$ for some $Y_1 \in \cB_1$ and $Y_2 \in \cB_2$, then
\begin{align*}
    \hat{\alpha}_1 (\varphi_2(\pi_2(Z_{x})))&=\alpha_1(Y_1)\varphi_2(Y_2), \\
    \hat{\alpha}_2 (\varphi_1(\pi_1(Z^{y})))&=\varphi_1(Y_1)\alpha_2(Y_2), \\
    \hat{\alpha}_1 (\alpha_2(\pi_2(Z_{x})))&=\alpha_1(Y_1)\alpha_2(Y_2).
\end{align*}

First we extend the statement of \cref{thm:submod}. 

\begin{thm} \label{thm:inf1}  
Let $\varphi_1, \varphi_2$ be nonnegative submodular functions over the set algebras $(J_1, \cB_1)$ and $(J_2, \cB_2)$, respectively, satisfying $\varphi_1(\emptyset)=0$ and $\varphi_2(\emptyset)=0$. Furthermore, let $\alpha_1\colon \cB_1 \to \bR_+$ and $\alpha_2\colon \cB_2 \to \bR_+$ be charges such that $\alpha_1(S_1)=\varphi_1(S_1)$ and $\alpha_2(S_2)=\varphi_2(S_2)$. Then, the function $b$ defined in~\eqref{eq:infsubmod_coupling} is a submodular coupling of $\varphi_1$ and $\varphi_2$.
\end{thm}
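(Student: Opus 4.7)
The plan is to follow the same skeleton as in the finite case (Theorem~\ref{thm:submod}), replacing the sums over $S_1$ and $S_2$ with the integrals $\hat\alpha_1$ and $\hat\alpha_2$, and exploiting the linearity and positivity of these integral operators. The measurability of the integrands, which is what makes the integrals well-defined, was already handled in the discussion preceding the theorem: for each $Z\in\cB$, the functions $x\mapsto\varphi_2(\pi_2(Z_x))$ and $y\mapsto\varphi_1(\pi_1(Z^y))$ take finitely many values and are measurable with respect to the finite subalgebras generated by a finite product representation of $Z$.

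First I would verify the coupling property. For $Z=Y_1\times Y_2$ with $Y_1\in\cB_1$ and $Y_2\in\cB_2$, the computations displayed just before the theorem statement give
\[
b(Y_1\times Y_2)=\alpha_1(Y_1)\,\varphi_2(Y_2)+\varphi_1(Y_1)\,\alpha_2(Y_2)-\alpha_1(Y_1)\,\alpha_2(Y_2),
\]
which is the direct analogue of \eqref{eq:Y1Y2_first}. Substituting $Y_1=J_1$ and using $\alpha_1(J_1)=\varphi_1(J_1)$ yields $b(J_1\times Y_2)=\varphi_1(J_1)\,\varphi_2(Y_2)$, and symmetrically $b(Y_1\times J_2)=\varphi_1(Y_1)\,\varphi_2(J_2)$. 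So $b$ is a coupling.

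Next I would check submodularity. Fix $X,Y\in\cB$. Exactly as in the finite case, fibers commute with intersections and unions, so for every $x\in J_1$,
\[
\pi_2((X\cap Y)_x)=\pi_2(X_x)\cap\pi_2(Y_x),\qquad \pi_2((X\cup Y)_x)=\pi_2(X_x)\cup\pi_2(Y_x),
\]
and analogously in the $y$-fiber. Submodularity of $\varphi_2$ then gives the pointwise inequality
\[
\varphi_2(\pi_2(X_x))+\varphi_2(\pi_2(Y_x))\ \geq\ \varphi_2(\pi_2((X\cap Y)_x))+\varphi_2(\pi_2((X\cup Y)_x))
\]
for every $x\in J_1$. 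Applying the positive linear operator $\hat\alpha_1$ (which preserves pointwise inequalities and is additive) yields the same inequality with each term replaced by its $\hat\alpha_1$-integral. The analogous statement for the second summand in \eqref{eq:infsubmod_coupling} follows similarly from the submodularity of $\varphi_1$ and positivity of $\hat\alpha_2$. For the third summand, the modularity of $\alpha_2$ gives the pointwise equality
\[
\alpha_2(\pi_2(X_x))+\alpha_2(\pi_2(Y_x))=\alpha_2(\pi_2((X\cap Y)_x))+\alpha_2(\pi_2((X\cup Y)_x)),
\]
and integrating via $\hat\alpha_1$ shows that $Z\mapsto\hat\alpha_1(\alpha_2(\pi_2(Z_x)))$ is modular on $\cB$. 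Adding the three contributions exhibits $b$ as a sum of two submodular functions minus a modular one, hence submodular.

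The main technical obstacle is less an obstacle than a bookkeeping issue: one must be sure that all manipulations stay inside the class of bounded measurable functions on which $\hat\alpha_i$ is linear and positive, and that the asymmetric-looking third term in \eqref{eq:infsubmod_coupling} is consistent. The former is handled by the finite-valuedness of the integrands noted above; the latter is exactly the remark in the paragraph preceding the theorem, namely that $\hat\alpha_1(\alpha_2(\pi_2(Z_x)))$ and $\hat\alpha_2(\alpha_1(\pi_1(Z^y)))$ agree on the semi-algebra of product sets and therefore, by a standard extension argument, on the generated algebra $\cB$. With these ingredients in place, the proof is a direct transcription of the finite case.
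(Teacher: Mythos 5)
Your proposal is correct and follows essentially the same route as the paper's proof: verify the coupling property by the direct computation of $b(Y_1\times Y_2)$ and establish submodularity by observing that, for fixed $x\in J_1$ and $y\in J_2$, the maps $Z\mapsto\varphi_2(\pi_2(Z_x))$ and $Z\mapsto\varphi_1(\pi_1(Z^y))$ are submodular while $Z\mapsto\alpha_2(\pi_2(Z_x))$ is modular, then applying linearity and positivity of the integral operators $\hat\alpha_1,\hat\alpha_2$. You merely spell out the fiber identities and the pointwise-to-integrated step that the paper compresses into one sentence.
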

\begin{proof} By the definition of $b$, we have
\begin{align*} 
    b(Y_1\times Y_2)
    &=\alpha_1(Y_1)\cdot\varphi_2(Y_2)+\alpha_2(Y_2)\cdot\varphi_1(Y_1)-\alpha_1(Y_1)\cdot\alpha_2(Y_2) \\
    &=\varphi_1(Y_1)\cdot\varphi_2(Y_2)- (\varphi_1(Y_1)-\alpha_1(Y_1))\cdot (\varphi_2(Y_2)-\alpha_2(Y_2)) \\
    &=\varphi_1(Y_1)\cdot\varphi_2(Y_2),
\end{align*}
if $Y_1=J_1$ or $Y_2=J_2$, hence $b$ is indeed a coupling.
To prove submodularity, note that for fix $x \in J_1$ and $y \in J_2$, the functions $\varphi_2(\pi_2(Z_{x}))$ and $\varphi_1(\pi_1(Z^{y}))$ are submodular while $\alpha_2(\pi_2(Z_{x}))$ is modular, hence $b$ is submodular by the linearity and nonnegativity of $\alpha_1$ and $\alpha_2$. 
\end{proof}

To extend the result of \cref{thm:poly}, we first have to define the measurable analogue of a base polytope. Let $(J, \cB)$ be a set algebra and $\varphi \colon \cB \to \bR_+$ be a polymatroid function. We say that a nonnegative charge $\alpha \colon \cB \to \bR_+$ is a {\it minorizing charge} if $\alpha(A)\leq\varphi(A)$ for all $A\in\cB$. A minorizing charge is called {\it basic} if $\alpha(J)=\varphi(J)$. The set of all basic minorizing charges is denoted by $\bmm(\varphi)$. It was proved in~\cite[Corollary 5.3.]{lovasz2023submodular} that $\bmm(\varphi)$ is nonempty. 

\begin{thm} \label{thm:inf2}  
Let $\varphi_1$, $\varphi_2$ be polymatroid functions on measurable spaces $(J_1,\cB_1)$ and $(J_2,\cB_2)$, respectively. Then, there exists a polymatroid coupling $\varphi$ of $\varphi_1$ and $\varphi_2$ on the product space $(J, \cB)$.
\end{thm}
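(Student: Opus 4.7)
The plan is to mimic the three-step strategy used in the finite case (Theorem~\ref{thm:poly}): produce an explicit submodular coupling via an integral formula, then monotonize by taking a downward envelope over supersets, and finally check that the envelope agrees with the original formula on product sets. By~\cite[Corollary~5.3]{lovasz2023submodular} we may pick basic minorizing charges $\alpha_1 \in \bmm(\varphi_1)$ and $\alpha_2 \in \bmm(\varphi_2)$, and then define $b \colon \cB \to \bR$ by formula~\eqref{eq:infsubmod_coupling}. Theorem~\ref{thm:inf1} tells us that $b$ is already a submodular coupling of $\varphi_1$ and $\varphi_2$; moreover, $b \ge 0$ on $\cB$, because the positivity of $\hat\alpha_1$ and the minorizing inequality $\alpha_2 \le \varphi_2$ give $\hat\alpha_1(\alpha_2(\pi_2(Z_x))) \le \hat\alpha_1(\varphi_2(\pi_2(Z_x)))$, whence the remaining term $\hat\alpha_2(\varphi_1(\pi_1(Z^y)))$ is nonnegative slack. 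The same algebra as in Claim~\ref{cl:b} also gives
\begin{equation*}
b(Y_1 \times Y_2) = \varphi_1(Y_1)\varphi_2(Y_2) - (\varphi_1(Y_1)-\alpha_1(Y_1))(\varphi_2(Y_2)-\alpha_2(Y_2)) \le \varphi_1(Y_1)\varphi_2(Y_2),
\end{equation*}
with equality when $\alpha_i(Y_i) = \varphi_i(Y_i)$ for at least one $i$, in particular whenever $Y_i = J_i$.

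Next, set $\varphi(Z) \coloneqq \inf\{b(Z') \mid Z' \in \cB,\ Z' \supseteq Z\}$. Monotonicity is immediate, $\varphi \ge 0$ follows from $b \ge 0$, and $\varphi(\emptyset) = b(\emptyset) = 0$. Submodularity is obtained by the standard $\varepsilon$-approximation argument: given $Z_1, Z_2 \in \cB$ and $\varepsilon > 0$, pick $Z'_i \supseteq Z_i$ with $b(Z'_i) \le \varphi(Z_i) + \varepsilon$, and use the submodularity of $b$ to get $\varphi(Z_1) + \varphi(Z_2) + 2\varepsilon \ge b(Z'_1 \cap Z'_2) + b(Z'_1 \cup Z'_2) \ge \varphi(Z_1 \cap Z_2) + \varphi(Z_1 \cup Z_2)$, then let $\varepsilon \to 0$.

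The crux is to show that the infimum is in fact attained at $Z' = Y_1 \times Y_2$, so that $\varphi(Y_1 \times Y_2) = b(Y_1 \times Y_2)$; granting this, $\varphi$ inherits the coupling identities from $b$, and the theorem follows. This is the infinite counterpart of Claim~\ref{cl:varphi_on_product}. The key idea is to split $J_1 = Y_1 \sqcup Y_1^c$ via the linearity $\hat\alpha_1(f) = \hat\alpha_1(f\mathbbm{1}_{Y_1}) + \hat\alpha_1(f\mathbbm{1}_{Y_1^c})$: on $Y_1$ the monotonicity of $\varphi_2$ combined with $\pi_2(Z_x) \supseteq Y_2$ yields $\varphi_2(\pi_2(Z_x)) \ge \varphi_2(Y_2)$, while on $Y_1^c$ the minorizing bound gives $\varphi_2(\pi_2(Z_x)) \ge \alpha_2(\pi_2(Z_x))$, producing
\begin{equation*}
\hat\alpha_1(\varphi_2(\pi_2(Z_x))) \ge \alpha_1(Y_1)\varphi_2(Y_2) + \rho\bigl(Z \setminus (Y_1 \times J_2)\bigr),
\end{equation*}
where $\rho(W) \coloneqq \hat\alpha_1(\alpha_2(\pi_2(W_x)))$. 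A symmetric bound is obtained for $\hat\alpha_2(\varphi_1(\pi_1(Z^y)))$, and the symmetry noted just below~\eqref{eq:infsubmod_coupling} ensures that $\rho$ is a well-defined nonnegative charge on $\cB$ with $\rho(Y_1 \times Y_2) = \alpha_1(Y_1)\alpha_2(Y_2)$. Substituting both bounds into $b(Z)$ reduces the desired inequality $b(Z) \ge b(Y_1 \times Y_2)$ to
\begin{equation*}
\rho\bigl(Z \setminus (Y_1 \times J_2)\bigr) + \rho\bigl(Z \setminus (J_1 \times Y_2)\bigr) + \alpha_1(Y_1)\alpha_2(Y_2) \ge \rho(Z),
\end{equation*}
which follows by additivity of $\rho$ and inclusion-exclusion, since $Z \supseteq Y_1 \times Y_2$.

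The main obstacle, beyond translating finite sums into integrals, is the loss of attained minima: the infimum defining $\varphi$ need not be realized by any $Z' \supseteq Z$, so submodularity must be argued through $\varepsilon$-approximations, and measurability of the integrands $x \mapsto \varphi_2(\pi_2(Z_x))$ and $y \mapsto \varphi_1(\pi_1(Z^y))$ has to be invoked (as explained right before~\eqref{eq:infsubmod_coupling}) each time we split by an indicator. Once these technicalities are in place, the rest of the argument transfers from the finite case by systematically replacing the modular sums with integrals against $\hat\alpha_i$ and exploiting the linearity and positivity of the latter.
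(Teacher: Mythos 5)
Your proposal is correct and follows the same strategy as the paper: pick $\alpha_i \in \bmm(\varphi_i)$, form $b$ via \eqref{eq:infsubmod_coupling}, monotonize by the infimum over supersets (arguing submodularity via $\varepsilon$-approximation exactly as in Claim~\ref{cl:infinc}), and reduce everything to showing the infimum is attained on product sets. Your treatment of the crucial step (the infinite analogue of Claim~\ref{cl:varphi_on_product}) is also essentially the paper's, merely repackaged: you abstract $\rho(W) = \hat\alpha_1(\alpha_2(\pi_2(W_x)))$ as the nonnegative product charge and invoke its finite additivity plus inclusion-exclusion, whereas the paper carries out the same computation inline via three displayed inequalities, the third of which is precisely your charge identity $\rho(Z) \le \rho(Z\setminus(Y_1\times J_2)) + \rho(Z\setminus(J_1\times Y_2)) + \alpha_1(Y_1)\alpha_2(Y_2)$. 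Your explicit observation that $b \ge 0$ everywhere (not just on product sets) is a harmless extra; the paper derives nonnegativity of the envelope $\varphi$ from $\varphi(\emptyset)=0$ and monotonicity instead.
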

\begin{proof}
    Choose $\alpha_1\in \bmm(\varphi_1)$ and $\alpha_2\in \bmm(\varphi_2)$, and define $b$ as in \eqref{eq:infsubmod_coupling}. 

\begin{cl} \label{cl:infb} 
We have $b(\emptyset) = 0$ and $b(Y_1 \times Y_2) \le \varphi_1(Y_1)\cdot  \varphi_2(Y_2)$  for each $Y_1 \in \cB_1$ and $Y_2 \in \cB_2$, where equality holds if $\varphi_1(Y_1) =\alpha_1(Y_1)$ or $\varphi_2(Y_2)=\alpha_2(Y_2)$. 
\end{cl}

\begin{proof}
The equality $b(\emptyset) = 0$ follows from $\varphi_1(\emptyset)=\varphi_2(\emptyset) = 0$.
For each $Y_1 \in \cB_1$ and $Y_2 \in \cB_2$, $\alpha_1(Y_1) \le \varphi_1(Y_1)$ and $\alpha_2(Y_2) \le \varphi_2(Y_2)$, thus we get
\[
b(Y_1 \times Y_2) =
\varphi_1(Y_1)\cdot\varphi_2(Y_2)- (\varphi_1(Y_1)-\alpha_1(Y_1))\cdot (\varphi_2(Y_2)-\alpha_2(Y_2)) \le 
\varphi_1(Y_1)\cdot  \varphi_2(Y_2). \]
Here equality holds if $\varphi_1(Y_1) =\alpha_1(Y_1)$ or $\varphi_2(Y_2)=\alpha_2(Y_2)$.
\end{proof}

The function $b$ is not necessarily monotone. Let $\varphi(Z)\coloneqq \inf_{Z'\supseteq Z} b(Z')$ for all $Z \in \cB$. 

\begin{cl}\label{cl:infinc}
$\varphi$ is increasing and submodular.
\end{cl}
\begin{proof}
Clearly, $\varphi$ is increasing by definition. To prove submodularity, let $Z_1,Z_2\in \cB$. Then, for any $\varepsilon>0$ and for $i=1,2$, there exists $Z'_i\supseteq Z_i$ such that $\varphi(Z_i)+\varepsilon \geq b(Z'_i)$. We get 
\begin{align*}
\varphi(Z_1)+\varphi(Z_2)
&\geq
b(Z'_1)+b(Z'_2)+2\varepsilon\\
&\geq 
b(Z'_1\cap Z'_2)+b(Z'_1\cup Z'_2)+2\varepsilon\\
&\geq
\varphi(Z_1\cap Z_2)+\varphi(Z_1\cup Z_2)+2\varepsilon.
\end{align*}
The claim follows by taking $\varepsilon \to 0$.
\end{proof}

\begin{cl} \label{cl:infvarphi_on_product}
$\varphi(Y_1\times Y_2) = b(Y_1\times Y_2)$ holds for each $Y_1\in\cB_1$ and $Y_2\in\cB_2$.
\end{cl}

\begin{proof}
    We need to show that $b(Z) \ge b(Y_1\times Y_2)$ holds if $Y_1\times Y_2 \subseteq Z \in \cB$. Using that $\varphi_2$ is increasing and $\alpha_2(X) \le \varphi_2(X)$ holds for any $X\in \cB_2$, we get
    \begin{align*}
        \hat{\alpha}_1 (\varphi_2(\pi_2(Z_{x})))&=\hat{\alpha}_1 (\mathbbm{1}_{Y_1} \cdot \varphi_2(\pi_2(Z_{x})))+\hat{\alpha}_1 (\mathbbm{1}_{J_1 \setminus Y_1} \cdot \varphi_2(\pi_2(Z_{x}))) \\
        &\geq \alpha_1(Y_1)\cdot \varphi_2(Y_2)+\hat{\alpha}_1 (\alpha_2(\pi_2((Z\setminus (Y_1 \times J_2))_{x}))).
    \end{align*}
    Similarly, 
    \begin{align*}
        \hat{\alpha}_2 (\varphi_1(\pi_1(Z^{y})))&=\hat{\alpha}_2 (\mathbbm{1}_{Y_2} \cdot \varphi_1(\pi_1(Z^{y})))+\hat{\alpha}_2 (\mathbbm{1}_{J_2 \setminus Y_2} \cdot \varphi_1(\pi_1(Z^{y}))) \\
        &\geq \varphi_1(Y_1)\cdot \alpha_2(Y_2)+\hat{\alpha}_2 (\mathbbm{1}_{J_2 \setminus Y_2} \cdot \alpha_1(\pi_1(Z^{y}))) \\
        &=\varphi_1(Y_1)\cdot \alpha_2(Y_2)+\hat{\alpha}_1 (\alpha_2(\pi_2((Z\setminus (J_1 \times Y_2))_{x}))).
    \end{align*}
    Finally, 
    \begin{align*}
        \hat{\alpha}_1 (\alpha_2(\pi_2(Z_{x})))
        &=\alpha_1(Y_1)\cdot \alpha_2(Y_2)+\hat{\alpha}_1 (\alpha_2(\pi_2((Z\setminus (Y_1 \times Y_2))_{x}))) \\
        &\leq \alpha_1(Y_1)\cdot \alpha_2(Y_2)+\hat{\alpha}_1 (\alpha_2(\pi_2((Z\setminus (Y_1 \times J_2))_{x})))+\hat{\alpha}_1 (\alpha_2(\pi_2((Z\setminus (J_1 \times Y_2))_{x}))).
    \end{align*}
    Combining these three inequalities the claim follows.
\end{proof}

Combining Claims~\ref{cl:infb},~\ref{cl:infinc} and~\ref{cl:infvarphi_on_product}, we deduce that $\varphi$ is a polymatroid coupling of $\varphi_1$ and $\varphi_2$. This concludes the proof of the theorem.
\end{proof}

\section{Universal functions}
\label{sec:universal}

In this section we show that, under certain conditions, if there exists a coupling of any two functions with some property $P$, then there exists a universal function with property $P$, i.e., a function that contains any finite function with property $P$ as a quotient. 

\subsection{Preparations}
\label{sec:prep}

If $\cQ = \{Q_i\}_{i=1}^n$ is a finite measurable partition of $J$, i.e.,  $\{Q_i\}_{i=1}^n$ is a partition and $Q_i \in \cB$ for $i \in [n]$, then the {\it quotient} $\varphi/\cQ\colon 2^{[n]} \to \bR$ denotes the function defined by $(\varphi/\cQ)(X) = \varphi(\bigcup_{i \in X} Q_i)$ for $X \subseteq [n]$. The notion of coupling can be naturally extended to more than two functions. Let $(J_1,\cB_1),\dots,(J_n,\cB_n)$ be set algebras and denote their product space by $(J,\cB)$. Furthermore, let $\varphi_i\colon\cB_i\to\bR$ be set functions for $i\in[n]$. We say that a function $\varphi\colon\cB\to\bR$ is a {\it coupling} of $\varphi_1,\dots,\varphi_n$ if $\varphi(S_1\times\dots\times S_{i-1}\times X_i\times S_{i+1}\times \dots\times S_n)=\varphi_i(X_i)\cdot\prod_{\ell\in [n]\setminus\{i\}}\varphi_j(J_\ell)$ for every $X_i\in\cB_i$, $i\in [n]$.

Let $(J_1, \cB_1)$ and $(J_2, \cB_2)$ be two set algebras. The functions $\varphi_1\colon \cB_1 \to \bR$ and $\varphi_2\colon \cB_2 \to \bR$ are {\it isomorphic} if there exists a bijection $f\colon J_1 \to J_2$ such that $f$ and $f^{-1}$ are measurable and $\varphi_2(f(X))=\varphi_1(X)$ for all $X\in \cB_1$.
By a {\it property} of set functions, we mean an isomorphism invariant class of set functions.
We call a property $P$ of set functions {\it finitary} if a function $\varphi\colon \cB \to \bR$ is in $P$ if and only if $\varphi/\cQ$  is in $P$ for any finite measurable partition $\cQ$ of $J$. 
A finitary property $P$ is {\it closed} if for any measurable space $(J, \cB)$ and any finite measurable partition of $J$, the set of functions $\{f\colon \cB \to \bR \mid f/\cQ \in P\}$ is closed. 
We call a finitary property $P$ {\it extendable} if for any $\varphi\colon2^{[n]} \to \bR$ with property $P$ and any finite, pairwise disjoint sets $A_1, \ldots, A_n$ with union $A$, there exists a function $\psi \colon 2^A \to \bR$ with property $P$ that extends $\varphi$ in the sense that that $\varphi(S)=\psi(\bigcup_{i \in S} A_i)$ for all $S \subseteq [n]$. We call a finitary property $P$ {\it liftable} if any two finite functions with property $P$ admit a coupling with property $P$ on the product of their ground sets. Note that liftability implies that any finite number of finite functions with property $P$ admit a coupling with property $P$ on the product of their ground sets. We say that property $P$ is {\it $b$-bounded} for some $b \in \bR_+$ if the range of any function with property $P$ is contained in $[-b,b]$. Finally, a normalized function with property $P$ is called {\it universal for property $P$}, if it contains all finite, normalized functions with property $P$ as a quotient.

We need the following well-known lemma (which follows from e.g.\ \cite[Problem~4.43]{komjath2006problems}) and include a proof for completeness.

\begin{lem} \label{lem:independent}
    There exists a family of subsets $\cH \subseteq 2^\bZ$ with $|\cH|=\fc$ such that for any disjoint finite subsets $\cH_1, \cH_2 \subseteq \cH$, we have
    \begin{equation*}
        \bigcap_{H \in \cH_1} H \cap \bigcap_{H \in \cH_2} H^c \neq \emptyset.
    \end{equation*}
\end{lem}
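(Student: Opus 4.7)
The plan is to establish the classical existence of an independent family of continuum size on a countable ground set, realizing the ground set as $\bZ$ via a bijection to an auxiliary countable encoding space.

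First I would fix an auxiliary countably infinite set $S$ (for concreteness, $S = \bN$), and let $\Omega$ denote the collection of all pairs $(F, \cF)$ where $F \subseteq S$ is finite and $\cF \subseteq 2^F$. The set $\Omega$ is a countable union of finite sets, hence countable, so I fix a bijection $\Omega \leftrightarrow \bZ$ and henceforth identify subsets of $\Omega$ with subsets of $\bZ$. For each $X \subseteq S$, define
\[
H_X \coloneqq \{(F, \cF) \in \Omega \mid X \cap F \in \cF\},
\]
and set $\cH \coloneqq \{H_X \mid X \subseteq S\}$. There are $\fc$ subsets of $S$, so to obtain $|\cH| = \fc$ I need to verify that $X \mapsto H_X$ is injective: if $X \neq Y$ and $s \in X \triangle Y$, then the pair $(\{s\}, \{\{s\}\})$ belongs to exactly one of $H_X, H_Y$, so $H_X \neq H_Y$.

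Next I would verify the independence property. Take disjoint finite subfamilies $\cH_1 = \{H_{X_1}, \ldots, H_{X_m}\}$ and $\cH_2 = \{H_{Y_1}, \ldots, H_{Y_n}\}$; by injectivity, the $X_i$'s and $Y_j$'s are pairwise distinct across the two lists (that is, $X_i \neq Y_j$ for all $i, j$). For each pair $(i, j)$ pick an element $s_{ij} \in X_i \triangle Y_j$, and set
\[
F \coloneqq \{s_{ij} \mid i \in [m],\, j \in [n]\}, \qquad \cF \coloneqq \{X_i \cap F \mid i \in [m]\}.
\]
Then by construction $X_i \cap F \in \cF$ for every $i$, so $(F, \cF) \in H_{X_i}$. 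On the other hand, for each $j$, the set $Y_j \cap F$ differs from $X_i \cap F$ at the distinguishing element $s_{ij}$ for every $i$, so $Y_j \cap F \notin \cF$, giving $(F, \cF) \notin H_{Y_j}$. Hence $(F, \cF)$ lies in the required intersection, proving non-emptiness. The edge cases $\cH_1 = \emptyset$ or $\cH_2 = \emptyset$ cause no trouble: if $\cH_2 = \emptyset$ take $\cF = 2^F$, and if $\cH_1 = \emptyset$ take $\cF = \emptyset$.

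I do not foresee a serious obstacle here; this is a textbook construction of an independent family (sometimes attributed to Fichtenholz--Kantorovich or Hausdorff). The only non-trivial ingredient is the encoding of the information ``finite witness set plus chosen subfamily'' into a single countable index space, which is what allows a single point of $\bZ$ to simultaneously witness positively for all members of $\cH_1$ and negatively for all members of $\cH_2$. A proof via Zorn's lemma or transfinite recursion is possible, but the explicit construction above is both constructive and short.
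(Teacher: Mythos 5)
Your proof is correct. Injectivity of $X \mapsto H_X$ is properly verified via singleton witnesses, disjointness of $\cH_1, \cH_2$ then yields $X_i \neq Y_j$ for all $i, j$, and the witness pair $(F,\cF)$ built from distinguishing elements $s_{ij}\in X_i\triangle Y_j$ does land in $\bigcap H_{X_i}\cap\bigcap H_{Y_j}^c$, since $s_{ij}\in F$ forces $X_i\cap F\neq Y_j\cap F$. The edge cases are handled; in fact the general formula already covers them.

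The route is genuinely different from the paper's, though both are classical constructions of an independent family of size continuum on a countable ground set. The paper works over the countable index set $\cR$ of finite unions of rational closed intervals in $\bR$, sets $\cH_x=\{A\in\cR\mid x\in A\}$ for $x\in\bR$, and uses the topological fact that any two disjoint finite subsets of $\bR$ can be separated by a member of $\cR$. You instead use the purely combinatorial Fichtenholz--Kantorovich encoding, where the index set consists of pairs $(F,\cF)$ with $F\subseteq\bN$ finite and $\cF\subseteq 2^F$, and $H_X$ records whether the trace $X\cap F$ lies in the chosen subfamily $\cF$. The paper's argument is shorter to state because ``find a rational-intervals set containing $X_1$ and missing $X_2$'' is taken as clear, but it quietly leans on the order/topology of $\bR$; yours is self-contained and needs no structure on the ambient set beyond finiteness, at the mild cost of a slightly more elaborate witness construction. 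Both deliver exactly what the lemma needs.
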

\begin{proof}
    Let 
    \begin{equation*}
        \cR\coloneqq \bigl\{ A \subseteq \bR \mid A=\bigcup_{i=1}^n [p_i, q_i] \text{ for some $n \in \bZ_+$ and rational numbers $p_i, q_i$ for $i \in [n]$} \bigr\}.
    \end{equation*} 
    Note that $\cR$ is countable, hence it is enough to find an appropriate system with $\cH \subseteq 2^\cR$. For any $x \in \bR$, let $\cH_x\coloneqq \{A \in \cR \mid x \in A \}$. For any disjoint finite subsets $X_1 \subseteq \bR$ and $X_2 \subseteq \bR$ we can clearly find $A \in \cR$ with $X_1 \subseteq A$ and $X_2 \cap A=\emptyset$. Hence the set system $\{\cH_x\}_{x \in \bR}$ satisfies the condition of the lemma.
\end{proof}

The proof of \cref{thm:countable} relies on a compactness argument. For a set algebra $(J, \cB)$, the space of set functions $\{f\colon \cB \to \bR\}$ can be identified with the product topological space $\prod_{A \in \cB} \bR$. For any $r\colon \cB \to \bR_+$, the space of functions 
\begin{equation*}
   \bigl\{f\colon \cB \to \bR \ \big| \  |f(A)| \leq r(A) \text{ for all } A \in \cB \bigr\}=\prod_{A \in \cB} [-r(A),r(A)] 
\end{equation*} 
is compact, as the product of compact sets is compact by Tychonoff's theorem \cite{Tychonoff1935}. We will use that a topological space $X$ is compact if and only if $\bigcap_{C \in \cC} C \neq \emptyset$ for any collection $\cC$ of closed subsets of $X$ such that $\bigcap_{C \in \cC'} C \neq \emptyset$ for all finite subcollections $\cC'\subseteq \cC$.

\subsection{Proof of existence}
\label{sec:existence}

Now we are ready to state the main result of this section.

\begin{thm}\label{thm:countable}
    For any finitary, closed, extendable, liftable and $b$-bounded property $P$ for some $b \geq 1$, there exists a universal function on the set algebra $(\bZ, 2^\bZ)$.
\end{thm}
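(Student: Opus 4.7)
The plan is a Tychonoff-style compactness argument in the product space $X = [-b,b]^{2^\bZ}$, which is compact by the $b$-boundedness of $P$. Since every finite normalized function with property $P$ is determined by a finite ground set and finitely many real values in $[-b,b]$, the collection of isomorphism classes of such functions has cardinality at most $\fc$; enumerate them as $\{\psi_\alpha\}_{\alpha\in A}$, where $\psi_\alpha\colon 2^{[n_\alpha]}\to[-b,b]$. We will find $f\in X$ with property $P$ such that $f/\cQ_\alpha = \psi_\alpha$ for carefully chosen partitions $\cQ_\alpha$ of $\bZ$; since any one $\psi_\alpha$ is normalized, the condition $f/\cQ_\alpha = \psi_\alpha$ forces $f(\bZ)=1$, so $f$ is automatically normalized and hence universal.

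To construct the partitions, take an independent family $\cH\subseteq 2^\bZ$ with $|\cH|=\fc$ provided by \cref{lem:independent}. For each $\alpha\in A$, pick $n_\alpha-1$ pairwise distinct sets $H_1^\alpha,\dots,H_{n_\alpha-1}^\alpha\in\cH$, with all sets chosen across all $\alpha$ being pairwise distinct; this is feasible since $|A|\cdot\aleph_0\leq\fc=|\cH|$. Let $\cQ_\alpha=\{Q_1^\alpha,\dots,Q_{n_\alpha}^\alpha\}$ be the partition given by $Q_i^\alpha=H_i^\alpha\setminus\bigcup_{j<i}H_j^\alpha$ for $i<n_\alpha$ and $Q_{n_\alpha}^\alpha=\bZ\setminus\bigcup_{j<n_\alpha}H_j^\alpha$. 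Then for any finite subfamily $\{\alpha_1,\dots,\alpha_k\}\subseteq A$ and any tuple $(i_1,\dots,i_k)$, the atom $Q_{i_1}^{\alpha_1}\cap\cdots\cap Q_{i_k}^{\alpha_k}$ is an intersection of finitely many distinct members of $\cH$ and their complements, hence non-empty by the defining property of $\cH$.

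Define closed subsets $C_\alpha=\{f\in X : f/\cQ_\alpha=\psi_\alpha\}$ (a finite system of coordinate equations) and, for each finite partition $\cR$ of $\bZ$, $D_\cR=\{f\in X : f/\cR\in P\}$ (closed by the closedness hypothesis on $P$). Any $f\in\bigcap_\alpha C_\alpha\cap\bigcap_\cR D_\cR$ has property $P$ by finitariness and realizes every $\psi_\alpha$ as a quotient, making $f$ the desired universal function. By compactness of $X$, it suffices to verify the finite intersection property. Given $\{\alpha_1,\dots,\alpha_k\}\subseteq A$ and finite partitions $\cR_1,\dots,\cR_l$ of $\bZ$, let $\cR^*$ be their common refinement with $\cQ_{\alpha_1},\dots,\cQ_{\alpha_k}$. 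By liftability applied inductively to $k$ functions, there is a coupling $\chi$ of $\psi_{\alpha_1},\dots,\psi_{\alpha_k}$ with property $P$ on $\prod_j[n_{\alpha_j}]$, which, via the natural bijection with the non-empty atoms of $\cQ_{\alpha_1}\wedge\cdots\wedge\cQ_{\alpha_k}$, is regarded as a function on that coarser partition. Since $\cR^*$ refines this partition and each of its atoms is non-empty, extendability yields $\chi^*$ on the atoms of $\cR^*$ with property $P$ that extends $\chi$. Setting $f(X)\coloneqq\chi^*(\{m : X\cap R^*_m\neq\emptyset\})$ for $X\subseteq\bZ$, where $\{R^*_m\}$ are the atoms of $\cR^*$, one checks that $f/\cR^*=\chi^*\in P$; that $f/\cR_i$ is a further quotient of $\chi^*$ and hence in $P$ by finitariness; and that $f/\cQ_{\alpha_j}=\psi_{\alpha_j}$, using that in the coupling formula the remaining factors $\psi_{\alpha_\ell}([n_{\alpha_\ell}])$ for $\ell\neq j$ equal $1$ by normalization.

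The principal technical hurdle is the simultaneous construction of the partitions $\cQ_\alpha$ such that every joint atom is non-empty; this is precisely what \cref{lem:independent} provides, supplying enough combinatorial room inside $\bZ$. Liftability and extendability then interlock cleanly on each finite collection of targets: the former produces a compatible coupling on the product, and the latter transplants it to an arbitrary finite refinement without losing property $P$. Compactness in $X$ finally packages these local witnesses into a single universal function on $2^\bZ$.
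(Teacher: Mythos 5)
Your proof is correct and follows essentially the same strategy as the paper: the independent family from \cref{lem:independent}, Tychonoff compactness in the product space $[-b,b]^{2^{\bZ}}$, and finite satisfiability via a $k$-fold coupling (liftability) followed by extendability to the common refinement. The only cosmetic difference is the encoding: the paper restricts $\Phi$ to power-of-two domains and maps each element of $S_\varphi$ to an atom of the algebra generated by $\log_2|S_\varphi|$ sets from $\cH$ (using extendability to reduce to this case), whereas you build an $n_\alpha$-part partition directly from $n_\alpha-1$ sets via a chain construction, avoiding the power-of-two reduction. Both encodings yield the same key fact — all joint atoms of finitely many partitions $\cQ_\alpha$ are nonempty — and the rest of the argument is identical.
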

\begin{proof}
    Let $\cH \subseteq 2^\bZ$ be a set given by \cref{lem:independent}. Let $\Phi$ be a set of representatives of functions with property $P$ with finite domain whose size is a power of two, i.e., for all such isomorphism class $\Phi$ contains exactly one function. For a function $\varphi \in \Phi$, denote by $S_\varphi$ the domain of $\varphi$. We may assume that for any two distinct $\varphi, \psi \in \Phi$, their domains are disjoint, that is, $S_\varphi \cap S_\psi=\emptyset$. Since $P$ is extendable, it is enough to find a function $F \colon 2^\bZ \to \bR$ with property $P$ that contains every member of $\Phi$ as a quotient.

    Up to isomorphism, there are continuum real-valued functions on finite domain, hence $|\Phi| \leq \fc$. Consequently, we can choose $H_1^\varphi, \dots , H_n^\varphi \in \cH$ for all $\varphi \in \Phi$, where $n=\log_2(|S_\varphi|)$, such that $H_i^{\varphi_1}=H_j^{\varphi_2}$ only if $\varphi_1=\varphi_2$ and $i=j$. Furthermore, for all $\varphi \in \Phi$ and all $a \in S_\varphi$, define $I_{a} \subseteq [n]$ and 
    \begin{equation*}
    T_{a}\coloneqq \bigcap_{i \in I_{a}} H_i^\varphi \cap \bigcap_{i \in [n] \setminus I_{a}} \left(H_i^\varphi\right)^c,
    \end{equation*}
    where $n=\log_2(|S_\varphi|)$, such that $I_{a} \neq I_{b}$ for distinct $a,b \in S_\varphi$. It is enough to prove the existence of a function $F \colon 2^\bZ \to \bR$ such that $F$ has property $P$ (which is equivalent to every finite factor having property $P$) and
     \begin{equation*}
        F\bigl(\bigcup_{a \in A} T_{a}\bigr)= \varphi(A)
    \end{equation*}
    for all $\varphi \in \Phi$ and $A \subseteq S_\varphi$, as the latter condition guarantees that it contains every finite function with property $P$ as a quotient. Note that, by the property of the family $\cH$ given by \cref{lem:independent}, $\bigcup_{a \in A} T_{a} \neq \bigcup_{b \in B} T_{b}$ for any $\varphi, \psi \in \Phi$ and $A \subseteq S_\varphi$, $B \subseteq S_\psi$.

    The function space $\{f\colon \cB \to \bR \mid |f(A)| \leq b \text{ for all $A \in 2^\bZ$}
    \}$ is compact. The condition that $F/\cQ$ has property $P$ is closed for every finite measurable partition $\cQ$ of $\bZ$. Furthermore, conditions of the form $F(\bigcup_{a \in A} T_{a})= \varphi(A)$ are also closed. Hence, it is enough to prove that for any finite number of these conditions, we can find a function satisfying all of them. Let $\cQ_1, \ldots , \cQ_n$ be finite partitions of $\bZ$, and let $\bigcup_{a \in A_i} T_a$ be sets for some $A_1, \ldots , A_m$, where there exist distinct $\varphi_1, \ldots , \varphi_k \in \Phi$ such that $A_i \subseteq S_{\varphi_{f(i)}}$ for all $1 \leq i \leq m$ and some function $f \colon [m] \to [k]$. We need to show that there exists a function $F' \colon 2^\bZ \to \bR$ such that $F'(\bigcup_{a \in A_i} T_a)=\varphi_{f(i)}(A_i)$ for $1 \leq i \leq m$, and $F'/\cQ_i$ has property $P$ for $1 \leq i \leq n$.  
    
    Let $\varphi$ be a coupling of $\varphi_1,\ldots, \varphi_k$ on ground set $S \coloneqq S_{\varphi_1} \times  \ldots \times S_{\varphi_k}$. Denote by $\cA_1$ the algebra generated by the sets $\{H_j^{\varphi_i} \mid 1 \leq i \leq k, \ 1 \leq j \leq \log_2(|S_{\varphi_i}|)$. The atoms of $\cA_1$ are of the form 
    $\bigcap_{i=1}^k T_{a_i}$ for some $(a_1, \ldots , a_k) \in S$. Note that none of these atoms are empty by \cref{lem:independent}. The set algebra of $S$ and $\cA_1$ are isomorphic, as to an atom $(a_1, \ldots , a_k) \in S$ we can assign $\bigcap_{i=1}^k T_{a_i}$ of $\cA_1$, which induces the isomorphism $\eta \colon 2^S \to \cA_1$,
    \begin{equation*}
        \eta(A)=\bigcup_{(a_i)_{i=1}^k \in A} \bigl(\bigcap_{i=1}^k T_{a_i}\bigr)
    \end{equation*}
    for all $A \in 2^S$. Now we can define the function $F_1 \colon \cA_1 \to \bR$ by $F_1(A)=\varphi(\eta^{-1}(A))$ for all $A \in \cA_1$. By definition, this satisfies all equalities of the form $F_1(\bigcup_{a \in A_i} T_a)=\varphi_{f(i)}(A_i)$, and $F_1$ has property $P$ as $\varphi$ has property $P$. Let $\cA_2$ be the algebra generated by the sets of $\cA_1$ and the sets appearing in one of the partitions $\cQ_i$ for $1 \leq i \leq n$. This is a refinement of $\cA_1$ still having a finite number of atoms, hence, by extendability, there exists a function $F_2 \colon \cA_2 \to \bR$ with property $P$ that extends $F_1$, i.e., $F_2|_{\cA_1}=F_1$. It follows that $F_2/\cQ_i$ has property $P$ for $1 \leq i \leq n$. Finally, an arbitrary extension $F'$ of $F_2$ to the domain $2^\bZ$ concludes the proof of the theorem.
\end{proof}

\subsection{Applications}
\label{sec:applications}

We show two applications of \cref{thm:countable}. First, we show that there exists an universal increasing submodular set function, using \cref{thm:poly}. Any increasing, submodular set function which has value $0$ on the empty set (apart form the constant 0) can be normalized, hence such a universal function contains all increasing, submodular set functions with value $0$ on the empty set up to a constant multiplier.  

\begin{cor}\label{cor:univ_incresing_submod}
    There exists a universal increasing submodular set function. 
\end{cor}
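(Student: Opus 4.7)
The plan is to specialize Theorem~\ref{thm:countable} to the property $P$ consisting of normalized polymatroid functions, i.e., nonnegative increasing submodular set functions $\varphi$ with $\varphi(\emptyset)=0$ and $\varphi(J)=1$. The entire argument then reduces to verifying the five hypotheses of Theorem~\ref{thm:countable} for this choice of $P$.

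First I would dispatch the three ``soft'' conditions. Boundedness holds with $b=1$, since monotonicity combined with $\varphi(\emptyset)=0$ and $\varphi(J)=1$ forces the range of every $\varphi \in P$ into $[0,1]$. Finitariness is essentially tautological: each defining axiom (monotonicity, submodularity, the two normalizations) is a condition involving only finitely many sets at a time, and hence is detected by a suitable finite quotient. Closedness is equally immediate, since for any fixed finite measurable partition $\cQ$ the condition $f/\cQ \in P$ is described by finitely many non-strict linear inequalities and equalities on the finitely many values $f(\bigcup_{i \in X} Q_i)$.

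For extendability, given $\varphi\colon 2^{[n]}\to\bR$ in $P$ and non-empty pairwise disjoint sets $A_1,\dots,A_n$ with union $A$, I would define
\[\psi(X) \coloneqq \varphi\bigl(\{i \in [n] : X \cap A_i \neq \emptyset\}\bigr)\]
and check that $\psi \in P$ and $\psi(\bigcup_{i \in S} A_i) = \varphi(S)$. Monotonicity and the normalizations of $\psi$ are clear; submodularity uses the easy facts that the map $X \mapsto \{i : X\cap A_i \neq \emptyset\}$ distributes over union exactly while $\{i : (X\cap Y)\cap A_i \neq \emptyset\} \subseteq \{i : X\cap A_i \neq \emptyset\}\cap \{i : Y \cap A_i \neq \emptyset\}$, and the missing inclusion is absorbed by the monotonicity of $\varphi$.

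The only step with genuine content is liftability, and this is precisely what Section~\ref{sec:polymatroid} was engineered to provide: Theorem~\ref{thm:poly} produces a polymatroid coupling $\varphi$ of any two finite polymatroid functions $\varphi_1,\varphi_2$, and when both factors are normalized the coupling identity forces $\varphi(S_1\times S_2) = \varphi_1(S_1)\cdot\varphi_2(S_2) = 1$, so $\varphi \in P$. The would-be main obstacle — constructing a coupling inside the class — has already been resolved upstream, so no further work is needed here. With all five conditions verified, Theorem~\ref{thm:countable} yields a universal element of $P$ on $(\bZ, 2^\bZ)$, as desired.
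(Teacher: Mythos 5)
Your proof is correct and matches the paper's proof of this corollary essentially step for step: same choice of property $P$, same verification of the five hypotheses of Theorem~\ref{thm:countable}, same $\psi(X) = \varphi(\{i : X\cap A_i \neq \emptyset\})$ construction for extendability, and same invocation of Theorem~\ref{thm:poly} for liftability. Your formulation of $P$ is if anything slightly more careful than the paper's, which states $P$ as ``normalized, increasing and submodular'' but implicitly also requires $\varphi(\emptyset)=0$ (and hence nonnegativity) for both the $1$-boundedness claim and the appeal to Theorem~\ref{thm:poly} to go through.
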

\begin{proof}
    Let $P$ be the property of being normalized, increasing and submodular. To apply \cref{thm:countable}, we have to show that $P$ is a finitary, closed, extendable, liftable, 1-bounded property.
    For a set function $\varphi\colon\cB \to \bR$ on $(J, \cB)$, having property $P$ means
    \begin{equation}\label{eq:monsubmod}
        \varphi(A) \leq \varphi(B) \ \ \text{ and } \ \ \varphi(C \cap D)+\varphi(C \cup D) \leq \varphi(C)+\varphi(D)
    \end{equation}
    for all $A, B, C, D \in \cB$ with $A \subseteq B$, and being normalized. Hence, it is clear that $P$ is finitary, and 1-boundedness also holds by monotonicity and $\varphi(J)=1$. Liftability follows from \cref{thm:poly}. Note that every inequality in \eqref{eq:monsubmod} is satisfied by a closed subset of $\{f\colon \cB \to \bR\}$, and the functions with normalized property is also closed subset, hence the set of functions having property $P$ in $\{f\colon \cB \to \bR\}$ is closed, as it is the intersection of closed subsets. Consequently,
    $\{f\colon \cB \to \bR \mid f/\cQ \in P\}$ is closed for all finite measurable partition, as it is a projection of $\{f\colon \cB \to \bR\}$ to the algebra generated by $\cQ$, proving that $P$ is closed. 
    Finally, if $\varphi\colon2^{[n]} \to \bR$ has property $P$, and $A_1, A_2, \ldots , A_n$ are finite, pairwise disjoint sets with union $A$, then the set function $\psi\colon 2^A \to \bR$ defined by $\psi(X) \coloneqq\varphi(\nu(X))$ for all $X \subseteq A$, where $\nu(X) \coloneqq \{ i \in [n] \mid X \cap A_i \neq \emptyset \}$ for all $X \subseteq A$, is an extension with property $P$. Indeed, $\psi$ is clearly an increasing extension, and for any $X, Y \subseteq A$, 
    \begin{align*}
        \psi(X)+\psi(Y)&=\varphi(\nu(X))+\varphi(\nu(Y)) \\ 
        &\geq \varphi(\nu(X) \cap \nu(Y))+\varphi(\nu(X) \cup \nu(Y)) \\ 
        &\geq \varphi(\nu(X \cap Y))+\varphi(\nu(X\cup Y)) \\
        &=\psi(X \cap Y)+\psi(X \cup Y).
    \end{align*}
     Hence property $P$ is extendable as well, so we can apply \cref{thm:countable} to finish the proof of the corollary.
\end{proof}

In particular, \cref{cor:univ_incresing_submod} implies that there exists a function $F\colon 2^\bZ \to \bR$ that contains all normalized matroid rank functions as a quotient.

As a second application, we show that there exists a universal function for coverage functions using \cref{cor:inf}. The proof relying on \cref{thm:countable} is only existential; later, in \cref{sec:infalt}, we describe an explicit construction.

\begin{cor}\label{cor:infalt}
    There exists a universal coverage function. 
\end{cor}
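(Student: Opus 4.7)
The plan is to apply Theorem~\ref{thm:countable} directly to the property $P$ of being a normalized coverage function. Since any nonzero coverage function $\varphi$ with $\varphi(\emptyset)=0$ is a positive scalar multiple of a normalized coverage function, a universal object for this property captures every finite coverage function up to a positive constant. I would verify each of the five hypotheses of Theorem~\ref{thm:countable} in turn, relying on the inequality-based characterization \ref{eq:kalt} of coverage functions reviewed in \cref{sec:prelim}, and on \cref{cor:inf} for the combinatorial heart of the argument.

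For \emph{finitariness} and \emph{closedness}, observe that $\varphi$ is a coverage function if and only if $\varphi(\emptyset)=0$ and, for every $k\in\bZ_{>0}$, the inequality \ref{eq:kalt} holds for every choice of $A_0,A_1,\ldots,A_k\in\cB$. Each such instance involves only finitely many sets and is therefore visible in any finite measurable partition $\cQ$ that refines $\{A_0,\dots,A_k\}$; this gives that $\varphi\in P$ iff $\varphi/\cQ\in P$ for every such $\cQ$, proving that $P$ is finitary. Each inequality \ref{eq:kalt} and the normalization $\varphi(J)=1$ cut out closed subsets of $\prod_{A\in\cB}\bR$, so the set $\{f\colon\cB\to\bR\mid f/\cQ\in P\}$, being a preimage of an intersection of closed sets under a coordinate projection, is closed for every finite measurable partition $\cQ$.

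For \emph{extendability}, given a normalized coverage function $\varphi\colon 2^{[n]}\to\bR$ and pairwise disjoint finite sets $A_1,\ldots,A_n$ with union $A$, define $\nu\colon 2^A\to 2^{[n]}$ by $\nu(X)=\{i\in[n]\mid X\cap A_i\neq\emptyset\}$ and set $\psi(X)\coloneqq\varphi(\nu(X))$. Since $\nu$ preserves unions, for any $X_0,X_1,\ldots,X_k\subseteq A$ we have
\begin{equation*}
\sum_{K\subseteq[k]}(-1)^{|K|}\psi\bigl(X_0\cup\bigcup_{i\in K}X_i\bigr)
=\sum_{K\subseteq[k]}(-1)^{|K|}\varphi\bigl(\nu(X_0)\cup\bigcup_{i\in K}\nu(X_i)\bigr)\leq 0,
\end{equation*}
so $\psi$ satisfies \ref{eq:kalt} for every $k$, with $\psi(\emptyset)=\varphi(\emptyset)=0$ and $\psi(A)=\varphi([n])=1$. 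Hence $\psi$ is a normalized coverage function extending $\varphi$. \emph{Liftability} is precisely \cref{cor:inf}: the tensor product supplied there is in particular a coupling, and if both factors are normalized then so is the tensor product. Finally, for \emph{$1$-boundedness}, any normalized coverage function is increasing with $\varphi(\emptyset)=0$ and $\varphi(J)=1$, so its range lies in $[0,1]$.

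With all hypotheses in place, Theorem~\ref{thm:countable} yields a universal function on $(\bZ,2^\bZ)$ within the class $P$, which is the desired universal coverage function. I expect no real obstacle here: the combinatorial content, namely that two coverage functions can be coupled within the class, has already been established in \cref{cor:inf}, and the remaining checks are soft, hinging only on the linear form of the inequalities \ref{eq:kalt} and on the union-preservation of the map $\nu$. The only minor subtlety worth flagging is that the extension step must preserve \ref{eq:kalt} for all $k$ simultaneously rather than just submodularity, but the same one-line identity using $\nu(X\cup Y)=\nu(X)\cup\nu(Y)$ handles every $k$ uniformly.
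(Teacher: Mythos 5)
Your proof is correct and follows the same overall strategy as the paper: verify the five hypotheses of \cref{thm:countable} for the property of being a normalized coverage function, obtaining liftability from \cref{cor:inf}. The one place you diverge from the paper is the extendability step. The paper invokes Choquet's extreme-ray decomposition (\cref{prop:extremal}) to write $\varphi=\sum_{\emptyset\neq X\subseteq[n]}c_X\varphi_X$ with $c_X\ge 0$ and then sets $\psi\coloneqq\sum c_X\varphi_{u(X)}$ where $u(X)=\bigcup_{i\in X}A_i$, whereas you pull back through the union-preserving map $\nu(X)=\{i\in[n]\mid X\cap A_i\neq\emptyset\}$ and check \ref{eq:kalt} directly. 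These two constructions yield the same function — $\varphi_{u(X)}(Y)=\varphi_X(\nu(Y))$, so $\sum c_X\varphi_{u(X)}(Y)=\varphi(\nu(Y))$ — but your justification is more elementary since it avoids Choquet's theorem at this step, relying only on the inequality characterization of coverage functions. Both are valid; yours also mirrors the argument the paper itself uses for extendability in \cref{cor:univ_incresing_submod}.
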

\begin{proof}
    The proof is similar to that of \cref{cor:univ_incresing_submod}. The coverage property is given by the inequalities of \ref{eq:kalt}, hence this property is finitary, closed, and by monotonicity it is 1-bounded. \cref{cor:inf} gives liftability. It remains to prove that if $\varphi\colon2^{[n]} \to \bR$ is a coverage function and $A_1, A_2, \ldots , A_n$ are finite, pairwise disjoint sets with union $A$, then we can extend $\varphi$ on $A$. By \cref{prop:extremal}, we can write 
    \begin{equation*}
        \varphi=\sum_{\substack{\emptyset\neq X \subseteq [n]}} c_X\varphi_X,
    \end{equation*} 
    where $\varphi_X$ is defined as in \eqref{eq:extremal_def} and $c_X \geq 0$ for all $\emptyset \neq X \subseteq [n]$. Let $u(X)\coloneqq\bigcup_{i \in X} A_i$ for all $X \subseteq [n]$ and define 
    \begin{equation*}
        \psi \coloneqq \sum_{\substack{\emptyset\neq X \subseteq [n]}} c_X\varphi_{u(X)}.
    \end{equation*}
    Then $\psi$ is a coverage function by \cref{prop:extremal}, and it clearly extends $\varphi$, proving extendability. Hence the corollary follows by \cref{thm:countable}.
\end{proof}

\begin{rem}
Note that the existence of a universal function for $b$-bounded submodular functions does not follow from our results, as \cref{thm:submod} does not ensure that the coupling of $b$-bounded submodular functions remains $b$-bounded. Furthermore, even achieving extendability is nontrivial, as it is unclear how to extend a $b$-bounded submodular function while preserving the bound.
\end{rem}

\subsection{Construction for coverage functions}
\label{sec:infalt}

By Corollary~\ref{cor:infalt}, finite normalized coverage functions admit a universal function. However, the proof of Theorem~\ref{thm:countable} offers little insight into the form of such a function. In this section, we construct an explicit coverage function so that any finite coverage function can be obtained as its quotient.

\begin{thm}\label{thm:construction}
Let $\cB$ be the Borel sigma-algebra on $\bR$ and $\lambda$ be the Lebesgue measure. For any $A\in\cB$, define $\Phi(A)=\lambda(\{\fr(a)\mid a\in A\})$. Then, $\Phi$ is a universal coverage function.
\end{thm}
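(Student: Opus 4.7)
The plan is to verify that $\Phi$ is a normalized coverage function and that every finite normalized coverage function arises as a quotient of $\Phi$. Writing $f\colon\bR\to[0,1)$ for the fractional part map, observe that $\Phi(A)=\lambda(f(A))$ and that $f(m+B)=B$ for every $B\subseteq[0,1)$ and $m\in\bZ$, so $\Phi$ depends on $A$ only through its image under $f$. In particular $\Phi(\bR)=\lambda([0,1))=1$, so $\Phi$ is normalized.

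To establish the coverage property, I would verify the inequalities \ref{eq:kalt} for all $k\ge 1$ directly. Since $f$ commutes with unions, setting $B_i\coloneqq f(A_i)$ the left-hand side becomes $\sum_{K\subseteq[k]}(-1)^{|K|}\lambda(B_0\cup\bigcup_{i\in K}B_i)$. Writing $\lambda(E)=\int\mathbbm{1}_E(t)\,\diff t$ and swapping sum and integral, I would evaluate the integrand pointwise: for $t\in B_0$ it equals $\sum_K(-1)^{|K|}=0$, while for $t\notin B_0$ a short case analysis reduces it to $-\mathbbm{1}[t\in B_1\cap\dots\cap B_k]$, which is nonpositive. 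Integration then yields $-\lambda((B_1\cap\dots\cap B_k)\setminus B_0)\le 0$, as required.

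For universality, take a finite normalized coverage function $\varphi\colon 2^{[n]}\to\bR$. By Choquet's theorem (\cref{prop:extremal}) we have $\varphi=\sum_{\emptyset\neq A\subseteq[n]}c_A\varphi_A$ with $c_A\ge 0$ and $\sum_A c_A=\varphi([n])=1$. I would partition $[0,1)$ into Borel sets $\{P_A\}_{\emptyset\neq A\subseteq[n]}$ with $\lambda(P_A)=c_A$ (for instance as successive half-open subintervals), and for each nonempty $A\subseteq[n]$ partition $\bZ$ into $|A|$ pairwise disjoint infinite classes $\{Z_{A,i}\}_{i\in A}$. Setting
\[Q_i\coloneqq\bigcup_{A\ni i}\bigcup_{m\in Z_{A,i}}(m+P_A)\qquad\text{for }i\in[n],\]
each translate $m+P_A$ lies in exactly one $Q_i$, so $\cQ=(Q_1,\dots,Q_n)$ is a measurable partition of $\bR$. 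For any $X\subseteq[n]$ one computes $f(\bigcup_{i\in X}Q_i)=\bigcup_{A:\,A\cap X\neq\emptyset}P_A$, and hence
\[\Phi\Bigl(\bigcup_{i\in X}Q_i\Bigr)=\sum_{A:\,A\cap X\neq\emptyset}c_A=\sum_A c_A\varphi_A(X)=\varphi(X),\]
so $\varphi$ is the quotient $\Phi/\cQ$.

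The main obstacle is arranging the $Q_i$ to form a partition of $\bR$ while simultaneously controlling their $f$-images; the key point is that the fibre $f^{-1}(P_A)$ consists of countably many disjoint translates of $P_A$, giving room to allocate one copy to each element of $A$, which produces exactly the overlap pattern required to recover $\sum_A c_A\varphi_A$ after quotienting.
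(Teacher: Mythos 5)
Your proof of the coverage property is essentially the paper's argument (reduce to $\lambda$ via commutation of $f$ with unions, then check that a measure satisfies \ref{eq:kalt}), just with the measure case spelled out explicitly. Your proof of universality, however, takes a genuinely different and cleaner route than the paper. The paper works directly with $\psi$ and its alternating inequalities: it splits $[0,1)$ into $q$ intervals $L_1,\dots,L_q$ of lengths $\psi([i])-\psi([i-1])$, then allocates pieces of the integer translates $J_i^m=\fr^{-1}(L_i)\cap[m,m+1)$ to the parts $Q_1,\dots,Q_q$ iteratively over $t=1,\dots,q$, proving two delicate bounds (the paper's Claims \ref{cl:1} and \ref{cl:2}, both derived from \ref{eq:kalt}) to guarantee that at each step there is enough unallocated space of the right overlap pattern. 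Your approach instead invokes Choquet's theorem (\cref{prop:extremal}) to write $\varphi=\sum_{\emptyset\neq A\subseteq[n]}c_A\varphi_A$, realizes the coefficients as lengths $\lambda(P_A)=c_A$ of a partition of $[0,1)$, and then uses the countably many disjoint integer translates $m+P_A$ to distribute one copy of $P_A$ to each $i\in A$ via the index classes $Z_{A,i}$; the quotient then computes directly to $\sum_{A\cap X\neq\emptyset}c_A=\varphi(X)$. This avoids the paper's iterative bookkeeping and the two technical claims entirely, at the cost of leaning on the extreme-ray decomposition as a black box. I checked the details: $\{m+P_A\}_{m\in\bZ,\,\emptyset\neq A\subseteq[n]}$ does partition $\bR$, each such translate lands in exactly one $Q_i$ because $\{Z_{A,i}\}_{i\in A}$ partitions $\bZ$, the $Q_i$ are Borel as countable unions of Borel sets, and $f(\bigcup_{i\in X}Q_i)=\bigcup_{A\cap X\neq\emptyset}P_A$ holds because each $Z_{A,i}$ is nonempty. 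Your argument is correct and, to my eye, shorter and more transparent than the one in the paper.
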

\begin{proof}
    Note that for any $A\in\cB$, we have $\{\fr(a)\mid a\in A\}=\bigcup_{k\in\bZ}\{a-k\mid a\in A\cap[k,k+1)\}$ which is a measurable set; hence, $\Phi(A)$ is well-defined.
    First we show that $\Phi$ is indeed a coverage function. To see this, let $A_0,A_1,\dots,A_k\in\cB$ be arbitrary. Define $\hat{A_i}\coloneqq \{\fr(a)\mid a\in A_i\}$ for $i=0,1,\dots,k$. Then, \ref{eq:kalt} for $\Phi$ and sets $A_0,A_1,\dots,A_k$ is equivalent to \ref{eq:kalt} for $\lambda$ and sets $\hat{A_0},\hat{A_1},\dots,\hat{A_k}$. Since any measure is a coverage function, the same holds for $\Phi$. 
    
    Now let $q$ be a positive integer and $\psi\colon2^{[q]}\to \bR$ be a normalized coverage function, that is, $\psi(\emptyset)=0$, $\psi([q])=1$ and $\psi$ satisfies \ref{eq:kalt} for all positive integer $k$ and choices of $A_0,A_1,\dots,A_k\in2^{[q]}$. Since $\psi$ is increasing and normalized, we have $0\leq \psi\leq 1$. For ease of discussion, we use the notation $f(A)\coloneqq \{\fr(a)\mid a\in A\}$.  Split the range $[0,1)$ into $q$ intervals $L_1,\dots,L_q$ where $L_i = [\psi([i-1]), \psi([i]))$ for $i\in[q]$. Then, for each $i\in[q]$, the set $\fr^{-1}(L_i)$ consists of infinitely many intervals $\{J_i^m\}_{m=1}^{\infty}$ where, for ease of discussion, we define $J_i^m=\emptyset$ for $m\in\bZ_{>0}$ if $\psi([i-1])=\psi([i])$.
    
    We construct a measurable partition $\cQ=\{Q_1,\dots,Q_q\}$ of $\bR$ for which $\psi=\Phi/\cQ$ in $q$ successive steps. In step $t$, we construct a function $\phi_t$ over $2^{[q]}$ satisfying the following:
    \begin{enumerate}[label=(\Alph*)]\itemsep0em
        \item $\phi_t(I)=\psi(I)$ if $I\subseteq [t]$, $t\in I$, \label{it:a}
        \item $\phi_{t-1}(I)=\phi_t(I)$ if $I\subseteq [t-1]$. \label{it:b}
    \end{enumerate} 
    Note that \ref{it:a} and \ref{it:b} together implies $\phi_q=\psi$. Indeed, let $I\subseteq [q]$ be an arbitrary set. If $\max I=q$, then $\phi_q(I)=\psi(I)$ holds by \ref{it:a}. Otherwise, we have $\phi_q(I)=\phi_{\max I}(I)=\psi(I)$, where the first equality follows by \ref{it:b} while the second follows by \ref{it:a}.
    
    In Step 1, we add set $Q_i$ to be $J_i^1$ for each $i\in[q]$ and define $\phi_1(I)\coloneqq \Phi(\bigcup_{i\in I}Q_i)$ for $I\subseteq [q]$. The function $\phi_1$ thus obtained is modular and satisfies $\phi_1\leq \psi$; in particular, if $\psi$ is modular then we have $\phi_1=\psi$. By construction, \ref{it:a} is satisfied as $\phi_1(\{1\})=\Phi(Q_1)=\Phi(J_1^1)=\psi(\{1\})$. Even more, $\phi_1([t])=\psi([t])$ holds for every $t\in[q]$.

    For $t\geq 2$, in Step $t$ we increase $Q_t$ by adding certain subsets $B_t^I\subseteq J^t_{\min I}$ for $\emptyset\neq I\subseteq[t-1]$, and then we define $\phi_t(I)\coloneqq \Phi(\bigcup_{i\in I}Q_i)$ for $I\subseteq [q]$. This guarantees that $\phi_{t-1}(I)=\phi_t(I)$ if $I\subseteq [t-1]$, that is, \ref{it:b} is satisfied. However, for \ref{it:a} to be satisfied as well, the sets $B_t^I$ must be chosen carefully.

    For $\emptyset\neq I\subseteq [t-1]$, let us define
    \begin{equation*}
    b_t^I = \sum_{J\subseteq I\cup \{t\}} (-1)^{|J|+1}\psi\big(([t-1]\setminus I) \cup J\big).
    \end{equation*}
    The role of these quantities is that eventually we will set $B_t^I$ such that $b_t^I=\Phi(B_t^I)$, but first, we prove two technical claims providing lower and upper bounds on $b_t^I$.

    \begin{cl}\label{cl:1}
        $b_t^I\geq 0$.   
    \end{cl}
    \begin{proof}
        The statement follows by \ref{eq:kalt} for $\psi$ and $k\coloneqq |I|+1$ with the choice $A_0 = [t-1]\setminus I$ and $A_1,\dots,A_k$ being the singletons of $I\cup \{t\}$.
    \end{proof}

    \begin{cl}\label{cl:2}
        $b_t^I\leq b_{\max{I}}^{I\setminus \{\max I\}}-(b_{\max{I}+1}^I + ... + b_{t-1}^I)$.
    \end{cl}
    \begin{proof}
    By slight rearrangement and the definition of $b_t^I$, we need to show that
    \begin{equation*}
      \sum_{J\subseteq I} (-1)^{|J|+1}\psi\big(([\max I]\setminus I) \cup J\big)\geq \sum_{i=\max I +1}^t\sum_{J\subseteq I\cup \{i\}}(-1)^{|J|+1}\psi\big(([i-1]\setminus I)\cup J\big).  
    \end{equation*}
    This can be vastly simplified, as the right hand side is a telescoping sum. Indeed, if for a given choice of $i=i_0$ we have a term $\pm \psi(([i_0-1]\setminus I)\cup J)$ with $i_0\in J$, then for $i=i_0+1$ we encounter the same term if we drop $i_0$ from $J$, with an opposite sign. This yields that all terms cancel except for the ones with $i=\max I +1$, $i\notin J$, and $i=t$, $i\in J$. However, the first source of terms precisely corresponds to the left hand side, hence that cancels as well. Thus the inequality simplifies to
    \begin{equation*}
        0\geq \sum_{J\subseteq I} (-1)^{|J|}\psi\big(([t]\setminus I)\cup J\big).
    \end{equation*}
    The latter inequality follows from \ref{eq:kalt} for $\psi$ and $k\coloneqq |I|$ with the choice $A_0 = [t]\setminus I$ and $A_1,\dots,A_k$ being the singletons of $I$. 
    \end{proof}

    With the help of Claims~\ref{cl:1} and~\ref{cl:2}, we show that there exist subsets $B_t^I\subseteq J_{\min I}^t$ for $\emptyset\neq I\subseteq [t-1]$ satisfying $b_t^I=\Phi(B_t^I)$ and being located so that the growth $\phi_t(I)-\phi_{t-1}(I)$ for $I\subseteq [t]$, $t \in I$ is precisely what is needed for~\ref{it:a}, as will be shown in Claim~\ref{cl:3}. For a set $X\subseteq [0,1)$, we denote by $X^c$ the complement of $X$, that is, $X^c=[0,1)\setminus X$.
    
    \begin{cl}\label{cl:4}
        For $\emptyset\neq I\subseteq [t-1]$, there exists $B_t^I\subseteq \fr^{-1}((\bigcap_{i\in I}f(Q_i))\cap (\bigcap_{i\in[t-1]\setminus I}f(Q_i)^c))\cap J^t_{\min I}$ such that $\Phi(B_t^I)=b_t^I$.
    \end{cl}
    \begin{proof}
    First observe that since $\Phi(B_t^I)=\lambda(f(B_t^I))\geq 0$, we need $b_t^I\geq 0$, which holds by Claim~\ref{cl:1}. On the other hand, we need to show that there is enough unallocated space in $J_{\min I}^t$ for the set $B_t^I$. That is, since for the desired $B_t^I$ we have $f(B_t^I) \subseteq (\bigcap_{i\in I}f(Q_i))\cap( \bigcap_{i\in[t-1]\setminus I}f(Q_i)^c)$, we need
    \begin{equation*}
        b_t^I \leq \lambda\Big(\big(\bigcap_{i\in I}f(Q_i)\big)\cap \big(\bigcap_{i\in[t-1]\setminus I}f(Q_i)^c\big)\Big).    
    \end{equation*}    
    By definition, we have 
    \begin{equation*}
      b_{\max{I}}^{I\setminus \{\max I\}} = \lambda\Big(\big(\bigcap_{i\in I}f(Q_i)\big)\cap \big(\bigcap_{i\in[\max I]\setminus I}f(Q_i)^c\big)\Big),  
    \end{equation*}
    and the sets $B_{j}^I$ for $j=\max I +1, \dots, t-1$ have pairwise disjoint images in $f(B_{\max{I}}^{I\setminus \{\max I\}})$ of measure $b_j^I$. Thus, the sufficient and necessary upper bound on $b_t^I$ can be rewritten as 
    \begin{equation*}
    b_t^I\leq b_{\max{I}}^{I\setminus \{\max I\}}-(b_{\max{I}+1}^I + ... + b_{t-1}^I),
    \end{equation*}
    which follows by Claim~\ref{cl:2}.    
    \end{proof}
    
    Let $B_t^I$ be sets provided by Claim~\ref{cl:4}. We define $B_t\coloneqq\bigcup_{\emptyset\neq I\subseteq[t-1]} B_t^I$ and update $Q_t$ by adding $B_t$ to it. 
    
    \begin{cl}\label{cl:3}
        For $I\subseteq[t-1]$, we have $\phi_t([t]\setminus I)=\psi([t]\setminus I)$.
    \end{cl}
    \begin{proof}
        We can express
        \begin{equation*}
        \Phi(\bigcup_{i\in [t]\setminus I}Q_i) = \Phi(\bigcup_{i\in [t-1]\setminus I} Q_i) + \Phi(Q_t) + \sum_{\emptyset\neq J\subseteq I}\Phi(B_t^J). 
        \end{equation*}
        As $\Phi(Q_t) = \psi([t]) - \psi([t-1])$ by definition, $\phi_{t-1}([t-1]\setminus I)=\psi([t-1]\setminus I)$ by~\ref{it:a} and~\ref{it:b}, and $\Phi(B_t^J)=b_t^J$, we get that $\phi_t([t]\setminus I)=\psi([t]\setminus I)$ holds if and only if
        \begin{equation*}
            \psi([t]\setminus I)=\psi([t-1]\setminus I)+\psi([t])-\psi([t-1]) + \sum_{\emptyset\neq J\subseteq I}b_t^J.    
        \end{equation*}
        For $I=\emptyset$, the equation is satisfied due to Step 1. Otherwise, using the definition of $b_t^I$, one can count the occurrences of $\psi([t-1]\setminus I')$ and $\psi([t]\setminus I')$ for each $\emptyset\subseteq I' \subseteq I$ on both sides. As coefficients, we get sums of the form $\sum_{i=0}^{|I\setminus I'|}(-1)^i\binom{|I\setminus I'|}{i}$ which is $0$, except for the cases when $I'=\emptyset$ or $I'=I$. These exceptions yield the remainder terms $\psi([t]\setminus I)$, $\psi([t-1]\setminus I)$, $\psi([t])$, $\psi([t-1])$ with appropriate signs.
    \end{proof}
    
    By Claim~\ref{cl:3}, property \ref{it:a} is also satisfied by $\phi_t$. Therefore, after $q$ steps, the above procedure provides a subpartition $Q_1,\dots,Q_q$ of $\bR$ satisfying $\psi(I)=\phi_q(I)=\Phi(\bigcup_{i\in I}Q_i)$ for $I\subseteq[q]$. This subpartition, however, can be extended to a proper partition of $\bR$ by adding $\fr^{-1}(L_i)\setminus (\bigcup_{i=1}^{q} Q_i)$ to $Q_i$ for $i\in[q]$. This does not change the function $\phi_q$, concluding the proof of the theorem.
\end{proof}

\section{Conclusions and open problems}
\label{sec:open}

In this paper, we introduce the notion of coupling for set functions, providing a novel operation that combines two set functions into a single one. The proposed framework fills the gap arising from the fact that matroids do not always have a tensor product, and provide a construction that is applicable in every case while still possessing properties similar to the tensor product. As applications, we gave new necessary conditions for the representability of matroids, and established the existence of universal functions from which any submodular function on a finite ground set can be obtained by taking quotients. 

We close the paper by mentioning some open problems:

\begin{enumerate}\itemsep0em
    \item Theorem~\ref{thm:poly} shows that two $2$-alternating functions always admit a $2$-alternating coupling, while Corollary~\ref{cor:inf} provides an analogous result for coverage functions. This leads to a natural question:  Do two $k$-alternating functions always have a $k$-alternating coupling?
    \item We have seen that the common information property is a necessary condition for linearity, which in turn implies Ingleton's inequality. Additionally, we showed that having a tensor product with the uniform matroid $U_{2,3}$ is also a necessary condition for linearity, and it too implies Ingleton's inequality. It remains an intriguing open problem to clarify how these two necessary conditions are related.
    \item The proofs of Theorems~\ref{thm:submod} and~\ref{thm:poly} provided constructions for coupling two submodular functions and two polymatroid functions, respectively. However, it is not difficult to find examples showing that not all couplings can be obtained through these constructions. The description of the convex hull of couplings remains an interesting open problem.  
\end{enumerate}

\paragraph{Acknowledgment.}

The authors are grateful to Miklós Abért, Márton Borbényi, Máté András Pálfy and László Márton Tóth for helpful discussions. András Imolay was supported by the Rényi Doctoral Fellowship of the Rényi Institute and by the EKÖP-24 University Excellence Scholarship Program of the Ministry for Culture and Innovation from the source of the National Research, Development and Innovation Fund. The research was supported by the Lend\"ulet Programme of the Hungarian Academy of Sciences -- grant number LP2021-1/2021, by the Ministry of Innovation and Technology of Hungary from the National Research, Development and Innovation Fund -- grant numbers ADVANCED 150556 and ELTE TKP 2021-NKTA-62, and by Dynasnet European Research Council Synergy project -- grant number ERC-2018-SYG 810115. This work was supported in part by EPSRC grant EP/X030989/1.

\paragraph{Data availability}  
No data are associated with this article. Data sharing is not applicable to this article.

\bibliographystyle{abbrv}
\bibliography{coupling}

\end{document}